%%% SETTINGS FOR FINAL PAPER 

%\documentclass[final, 11pt]{amsart}
%\documentclass[11pt,draft]{amsart}
%\documentclass[11pt]{amsart}
\documentclass{amsart}
%\documentclass[reqno,12pt]{amsart} % use larger type; default would be 10pt
%\documentclass[reqno,11pt]{amsart} % use larger type; default would be 10pt
%\documentclass[reqno]{amsart} % right equations numbered
%\documentclass[leqno]{amsart} % left equations numbered

%%% PACKAGES

\usepackage[foot]{amsaddr} %package that puts the authors addresses on the title page as a footnote (when using AMSART document class only).

\usepackage[utf8]{inputenc} 

\usepackage{verbatim}
\usepackage{amssymb}
\usepackage{amsthm}
\usepackage{amsmath}
\usepackage{amsfonts}
\usepackage{latexsym}
\usepackage{mathtools} 
\usepackage{graphicx}

\usepackage{enumitem}
\usepackage[english]{babel}
\usepackage[utf8]{inputenc}

%%% For Hyperlinks inside the PDF file
\usepackage[usenames,dvipsnames]{xcolor} % Driver-independent color extensions for LATEX and pdfLATEX
\definecolor{dkblue}{RGB}{1,31,91} % This is a dark Blue
% \definecolor{dkblue}{HTML}{000f3a} % This is a darker Blue 
\usepackage[colorlinks=true, pdfstartview=FitV, linkcolor=dkblue, citecolor=dkblue, urlcolor=dkblue]{hyperref}     

%%% TODO NOTES PACKAGE
%%%
%%% Insert notes with the \todo command.
%%% To show a list of the inserted notes, use
%%%    \listoftodos
\usepackage{todonotes}

% \usepackage[color]{showkeys}
%\usepackage[notref,notcite]{showkeys}
% putting the showkeys package last makes the bibtex citations print on the page correctly...

% \let\oldtocsection=\tocsection

% \let\oldtocsubsection=\tocsubsection

% \renewcommand{\tocsection}[2]{\hspace{0em}\oldtocsection{#1}{#2}\vspace*{0.2cm}}

% \renewcommand{\tocsubsection}[2]{\hspace{1.5em}\oldtocsubsection{#1}{#2}\vspace*{0.1cm}}

% \usepackage{color}

\definecolor{myred}{rgb}{0.7,0.1,0.1}
\definecolor{mygreen}{rgb}{0.1,0.7,0.1}
\definecolor{myblue}{rgb}{0.2,0.2,0.5}

% NUMBERING SCHEME BELOW
\theoremstyle{plain}
\newtheorem{thm}{Theorem}
\newtheorem{remark}[thm]{Remark}
\newtheorem{prop}[thm]{Proposition}

\newtheorem{lemma}[thm]{Lemma}
\newtheorem{defn}[thm]{Definition}

\numberwithin{equation}{section}
\numberwithin{thm}{section}

% ALTERNATIVE NUMBERING SCHEME
%\newtheorem{lemma}{Lemma}[section]
%\newtheorem{theorem}{Theorem}[section]
%\newtheorem{definition}{Definition}[section]
%\newtheorem{proposition}{Proposition}[section]
%\newtheorem{remark}{Remark}[section]
%\newtheorem{corollary}{Corollary}[section]

%\numberwithin{equation}{section}
%\numberwithin{theorem}{section}
%\numberwithin{corollary}{section}
%\numberwithin{lemma}{section}

% PUT ANY SHORTCUT COMMANDS BELOW HERE

\newcommand{\pv}{\text{pv}\hspace{-0.1cm}}

\newcommand{\fzerone}{\mathcal{F}^{0,1}}
\newcommand{\fzeronenu}{\mathcal{F}^{0,1}_\nu}
\newcommand{\foneone}{\dot{\mathcal{F}}^{1,1}}
\newcommand{\foneonenu}{\dot{\mathcal{F}}^{1,1}_\nu}

\newcommand{\ftwoonenu}{\dot{\mathcal{F}}^{2,1}_\nu}

\newcommand{\xoneonenu}{\|\bm{X}\|_{\dot{\mathcal{F}}^{1,1}_\nu}}
\newcommand{\xtwoonenu}{\|\bm{X}\|_{\dot{\mathcal{F}}^{2,1}_\nu}}

\newcommand{\paren}[1]{\left(#1\right)}
\newcommand{\jump}[1]{\left[#1\right]}

\newcommand{\PD}[2]{\frac{\partial#1}{\partial#2}}
\newcommand{\p}{\partial}

\newcommand{\at}[2]{\left. #1 \right|_{#2}}

\newcommand{\mb}[1]{\mathbf{#1}}
\newcommand{\mc}[1]{\mathcal{#1}}

\newcommand{\bm}[1]{\boldsymbol{#1}}
\newcommand{\abs}[1]{\left\lvert #1 \right\rvert}
\newcommand{\norm}[1]{\left\lVert #1 \right\rVert}
\newcommand{\dual}[2]{\left\langle #1,#2 \right\rangle}
\newcommand{\thetaeta}{\Big(\frac{\theta\!+\!\eta}{2}\Big)}

%$ Bob defined shortcuts
\newcommand{\dissconst}{\mathcal{C}}
\newcommand{\eqdef}{\overset{\mbox{\tiny{def}}}{=}}
\newcommand{\mH}{\mathcal{H} }
\newcommand{\sign}{\text{sgn}}
\newcommand{\derivdiff}{\bm{\mathcal{D}}^2}
\newcommand{\nuTIME}{\nu(t)}

\begin{document}

\keywords{Peskin problem, Fluid-Structure interface, viscosity contrast, global regularity, critical regularity,  immersed boundary problem, Stokes flow, fractional Laplacian, solvability, stability.}
\subjclass[2010]{35Q35, 35C10, 35C15,  35R11, 35R35, 76D07.}%\\Primary: 	 %; Secondary: 83A05.\\}
% \date{\today}

% \let\thefootnote\relax\footnotetext{
% 	\textit{Key words and phrases.}  Peskin problem, Fluid-Structure interface, Global regularity, Viscosity jump.}
% \addtocounter{footnote}{-1}\let\thefootnote\svthefootnote	

\title[The Peskin Problem with Viscosity Contrast]{The Peskin Problem with Viscosity Contrast}

\author[E. Garc\'ia-Ju\'arez]{Eduardo Garc\'ia-Ju\'arez$^{\dagger,\ast}$}
\thanks{$^{\ast}$partially supported by the grant MTM2017-89976-P (Spain), by the ERC through the Starting Grant project H2020-EU.1.1.-639227, and by the AMS-Simons Travel Grant.}

% \address{$^\ddagger$Department of Mathematics, University of Pennsylvania, David Rittenhouse Lab., 209 South 33rd St., Philadelphia, PA 19104, USA.  \href{mailto:edugar@math.upenn.edu}{edugar@math.upenn.edu}}

\author[Y. Mori]{Yoichiro Mori$^{\dagger,\mathsection}$}
\thanks{$^{\mathsection}$partially supported by the NSF grant DMS-1907583, 2042144 (USA) and the Math+X award from the Simons Foundation.}

\author[R. M. Strain]{Robert M. Strain$^{\dagger,\mathparagraph}$}
\address{$^\dagger$Department of Mathematics, University of Pennsylvania, David Rittenhouse Lab., 209 South 33rd St., Philadelphia, PA 19104, USA. 
$^{\ast}$\href{mailto:edugar@math.upenn.edu}{edugar@math.upenn.edu}
$^{\mathsection}$\href{mailto:y1mori@math.upenn.edu}{y1mori@math.upenn.edu}
$^{\mathparagraph}$\href{mailto:strain@math.upenn.edu}{strain@math.upenn.edu}}
\thanks{$^{\mathparagraph}$partially supported by the NSF grant DMS-1764177 (USA)}

% \begin{quote}
% 	\begin{tabular}{ll}
% 		\textbf{Eduardo Garc\'ia-Ju\'arez}\\
% 		{\small Department of Mathematics}\\
% 		{\small University of Pennsylvania}\\
% 		{\small David Rittenhouse Lab., 209 South 33rd St.}\\
% 		{\small Philadelphia, PA 19104, USA}\\
% 		{\small Email: edugar@math.upenn.edu}
% 	\end{tabular}
% \end{quote}
% \vspace{0.5cm}
% \begin{quote}
% 	\begin{tabular}{ll}
% 		\textbf{Yoichiro Mori}\\
% 		{\small Department of Mathematics}\\
% 		{\small University of Pennsylvania}\\
% 		{\small David Rittenhouse Lab., 209 South 33rd St.}\\
% 		{\small Philadelphia, PA 19104, USA}\\
% 		{\small Email: y1mori@math.upenn.edu}
% 	\end{tabular}
% \end{quote}
% \vspace{0.5cm}
% \begin{quote}
% 	\begin{tabular}{ll}
% 		\textbf{Robert M. Strain}\\
% 		{\small Department of Mathematics}\\
% 		{\small University of Pennsylvania}\\
% 		{\small David Rittenhouse Lab., 209 South 33rd St.}\\
% 		{\small Philadelphia, PA 19104, USA}\\
% 		{\small Email: strain@math.upenn.edu}
% 	\end{tabular}
% \end{quote}

\begin{abstract}
The Peskin problem models the dynamics of a closed elastic filament immersed in an incompressible fluid.  In this paper, we consider the case when the inner and outer viscosities are possibly different.  This viscosity contrast adds further non-local effects to the system through the implicit non-local relation between the net force and the free interface.  We prove the first global well-posedness result for the Peskin problem in this setting.  The result applies for \textit{medium size} initial interfaces in critical spaces and shows instant analytic smoothing.  We carefully calculate the medium size constraint on the initial data.   These results are new even without viscosity contrast. 
\end{abstract}

% set the depth for the table of contents (0-2)
\setcounter{tocdepth}{1}
%\chapter is level 0
%\section is level 1
%\subsection is level 2
%\subsubsection is level 3
%\paragraph is level 4
%\subparagraph is level 5

\maketitle
\tableofcontents

\section{Introduction}

Fluid structure interaction (FSI) problems in which an elastic structure interacts with a surrounding fluid are found in many areas of science and engineering. Many numerical algorithms have been developed for such problems, and the scientific computing of FSI problems continues to be a very active area of research \cite{MR2242805,MR2009378,TRYGGVASON2001708,richter2017fluid}. The {\em Peskin problem}, considered in this paper, is arguably one of the simplest FSI problems, and has been used extensively in physical modeling as well as in the development of numerical algorithms as a prototypical test problem.

\subsection{Formulation}
Consider the following fluid problem in $\mathbb{R}^2$. %known as the {\em Peskin Problem}.
A closed elastic string $\Gamma$ encloses a simply connected bounded domain $\Omega_1\subset\mathbb{R}^2$
filled with a Stokes fluid with viscosity $\mu_1$. The outside region $\Omega_2=\mathbb{R}^2\backslash (\Omega_1\cup \Gamma)$ is filled with 
a Stokes fluid of viscosity $\mu_2$. The equations satisfied are:
\begin{align}
\label{uint}
\mu_1 \Delta \bm{u}-\nabla p&=\bm{0} \text{ in } \Omega_1 \\
\label{uout}
\mu_2 \Delta \bm{u}-\nabla p&=\bm{0} \text{ in } \Omega_2 \\
\label{incomp}
\nabla \cdot \bm{u}&=0 \text{ in } \mathbb{R}^2\backslash \Gamma.
\end{align}
Here $\bm{u}$ is the velocity field and $p$ is the pressure. 

We must specify the interface conditions at $\Gamma$.
Parametrize $\Gamma$ 
by the material or Lagrangian coordinate $\theta \in\mathbb{S}=\mathbb{R}/(2\pi\mathbb{Z})$, and let $\bm{\mathcal{X}}(\theta,t)$
denote the coordinate position of $\Gamma$ at time $t$. 
The parametrization is in the counter-clockwise direction, 
so that the interior region $\Omega_1$ is on the left hand side of the tangent vector 
$\partial \bm{\mathcal{X}}/\partial \theta$.
For any quantity $w$ defined on 
$\Omega_1$ and $\Omega_2$, we set:
\begin{equation*}
\jump{w}=\at{w}{\Gamma_{\rm 1}}-\at{w}{\Gamma_{\rm 2}}
\end{equation*}
where $\at{w}{\Gamma_{\rm 1}}$ and $\at{w}{\Gamma_{\rm 2}}$ are the trace values of $w$ at $\Gamma$ evaluated 
from $\Omega_1$ (interior) and $\Omega_2$ (exterior) sides of $\Gamma$.
Let $\bm{n}$ be the outward pointing unit normal vector on $\Gamma$:
\begin{equation*}
\bm{n}=-\frac{\p_\theta \bm{\mathcal{X}}^\perp}{\abs{\p_\theta \bm{\mathcal{X}}}}, \; \p_\theta \bm{\mathcal{X}}=\PD{\bm{\mathcal{X}}}{\theta},\;\p_\theta \bm{\mathcal{X}}^\perp=
\mc{R}\p_\theta \bm{\mathcal{X}},\; \mc{R}=\begin{bmatrix} 0 & -1 \\ 1 & 0 \end{bmatrix},
\end{equation*}
where $\mc{R}$ is the $\pi/2$ rotation matrix.
The interface conditions are:
\begin{align}
\label{ujump}
\PD{\bm{\mathcal{X}}}{t}&=\bm{u}(\bm{\mathcal{X}},t),
\\ 
\label{noslip}
\jump{\bm{u}}&=0,
\\
\label{stressjump}
\jump{\Sigma\bm{n}}&=\bm{F}_{\rm el}\abs{\p_\theta \bm{\mathcal{X}}}^{-1}, 
\; \Sigma=\begin{cases}
\mu_1 \paren{\nabla \bm{u}+(\nabla \bm{u})^{\rm T}}-pI &\text{ in } \Omega_1\\
\mu_2\paren{\nabla \bm{u}+(\nabla \bm{u})^{\rm T}}-pI &\text{ in }\Omega_2
\end{cases},
\end{align}
where $I$ is the $2\times 2$ identity matrix.
The first condition is the no-slip boundary condition and the second is the stress balance condition where $\Sigma$
is the fluid stress and $\bm{F}_{\text{el}}$
is the elastic force exerted by the string $\Gamma$. We let:
\begin{equation}\label{linearF}
\bm{F}_{\rm el}=k_0\p_\theta^2 \bm{\mathcal{X}}, \quad k_0>0,
\end{equation}
where $k_0$ is the elasticity constant of the string $\Gamma$.

In the far field, $\bm{x}\to \infty$, we impose the condition that $\bm{u}\to 0$ and $p\to 0$.  This completes the specification of the Peskin problem.

% We shall also consider the case when the force is given by:
% \begin{equation}\label{nonlinearF}
% \bm{F}_{\rm el}=\p_\theta\paren{\mc{T}(\abs{\p_\theta \bm{\mathcal{X}}})\frac{\p_\theta\bm{\mathcal{X}}}{\abs{\p_\theta \bm{\mathcal{X}}}}}
% \end{equation}
% where $\mc{T}(s)$ is a tension coefficient that must satisfy the structure condition $\mc{T}>0$ and $d\mc{T}/ds>0$.
% Note that the above expression is reduced to \eqref{linearF} if we take $\mc{T}(s)=k_0s$, hence $k_0=\mathcal{T}(1)=d\mathcal{T}/ds$.

%\subsection{Boundary Integral Formulation}\label{Formulation}

Let us rewrite the above problem using boundary integral equations. Given some function $\bm{F}$ defined on $\Gamma$, 
we express the solution to our problem as the following single layer potential on $\mathbb{S}=[-\pi, \pi]$:
\begin{align}\label{singlelayer}
\bm{u}(\bm{x},t)&=\int_{\mathbb{S}} G(\bm{x}-\bm{\mathcal{X}}(\eta))\bm{F}(\eta)d\eta,\\
\label{G}
G(\bm{x})&=\frac{1}{4\pi}\paren{-\log \abs{\bm{x}}I+\frac{\bm{x}\otimes\bm{x}}{\abs{\bm{x}}^2}}, \; \quad
\bm{x}=(x_1,x_2)^{\rm T}\in \mathbb{R}^2,
\end{align}
where $G$ is the Stokeslet, the fundamental solution of the 2D Stokes problem.  For additionally $\bm{y}=(y_1,y_2)^{\rm T}\in \mathbb{R}^2$ we use  the notation
$$
\bm{x}\otimes\bm{y}=\begin{bmatrix}x_1 y_1 & x_1 y_2 \\ x_2y_1  & x_2 y_2\end{bmatrix}.
$$
We note that $\bm{\mathcal{X}}$ and $\bm{F}$ (and other variables) depend on $t$, 
but we will often suppress this dependence to avoid cluttered notation.
We note that the single layer potential does not have a velocity jump across the interface, and thus, 
the boundary condition \eqref{noslip} is automatically satisfied. We thus have:
\begin{equation}\label{Xteqn}
\PD{\bm{\mathcal{X}}}{t}(\theta)=\int_{\mathbb{S}} G(\Delta \bm{\mathcal{X}})\bm{F}(\eta)d\eta,
\end{equation}
where we use the notation
\begin{equation*}
 \Delta \bm{\mathcal{X}}=\bm{\mathcal{X}}(\theta)-\bm{\mathcal{X}}(\eta).
\end{equation*}
On the other hand, the stress interface condition \eqref{stressjump} is not automatically satisfied, 
and this will lead to an equation for $\bm{F}$. Let us compute the stress associated with the single layer expression \eqref{singlelayer}.
The stress $\Sigma$ in $\Omega_2$ is given by:
\begin{equation*}
\Sigma_{ij}(\bm{x})=\mu_2\int_\mathbb{S} \mathcal{T}_{ijk}(\bm{x}-\bm{\mathcal{X}}(\eta))F_k(\eta)d\eta, 
\end{equation*}
with 
\begin{equation}\label{Tijk}
\mathcal{T}_{ijk}=-\frac{1}{\pi}\frac{x_ix_jx_k}{\abs{\bm{x}}^4},
\end{equation}
where the subscripts denote the components of the respective tensors/vectors, such as $\bm{F}=(F_1,F_2)^{\rm T}$, and the summation convention is in effect for repeated indices. In $\Omega_1$, the stress is given by
\begin{equation*}
\Sigma_{ij}(\bm{x},t)=\mu_1\int_\mathbb{S} \mathcal{T}_{ijk}(\bm{x}-\bm{\mathcal{X}}(\eta))F_k(\eta)d\eta.
\end{equation*}
Thus, the trace values of the normal stresses are given by the following equations:
\begin{align*}
\at{\Sigma_{ij}(\bm{\mathcal{X}}(\theta))n_j(\theta)}{\Gamma_{\rm 2}}
&\!=\!\mu_2\paren{-\frac{1}{2}F_i\abs{\p_\theta \bm{\mathcal{X}}}^{-1}+\pv\int_{\mathbb{S}} \mathcal{T}_{ijk}(\Delta \bm{\mathcal{X}})F_k(\eta) n_j(\theta) d\eta},\\
\at{\Sigma_{ij}(\bm{\mathcal{X}}(\theta))n_j(\theta)}{\Gamma_{\rm 1}}
&=\mu_1 \paren{\frac{1}{2}F_i\abs{\p_\theta\bm{\mathcal{X}}}^{-1}
	+\pv\int_{\mathbb{S}} \mathcal{T}_{ijk}(\Delta \bm{\mathcal{X}})F_k(\eta) n_j(\theta) d\eta}.
\end{align*}
The stress jump condition \eqref{stressjump} thus reduces to (for $i=1,2$):
\begin{equation*}
F_i(\theta)+2A_\mu\int_{\mathbb{S}} \mathcal{T}_{ijk}(\Delta \bm{\mathcal{X}})F_k(\eta) \p_\theta \mathcal{X}^\perp_j(\theta) d\eta
=\frac{2}{\mu_1+\mu_2}F_{{\rm el},i}(\theta),
\end{equation*}
where 
\begin{equation}\label{Amu}
A_\mu=\frac{\mu_2-\mu_1}{\mu_1+\mu_2}.
\end{equation}
We define 
\begin{equation}\notag
    \mathcal{S}_i(\bm{F},\bm{\mathcal{X}})(\theta)=-\p_\theta \mathcal{X}^\perp_j(\theta)\int_{\mathbb{S}} \mathcal{T}_{ijk}(\Delta \bm{\mathcal{X}})F_k(\eta)  d\eta.
\end{equation}
We will frequently write it in vector notation as follows
\begin{equation}\label{viscosityjump}
\begin{aligned}
\bm{F}(\theta)=2A_\mu \bm{\mathcal{S}}(\bm{F},\bm{\mathcal{X}})(\theta)+2A_e\bm{\tilde{F}}_{\text{el}}(\theta),
\end{aligned}
\end{equation}
where
\begin{equation}\label{S}
\begin{aligned}
\bm{\mathcal{S}}(\bm{F},\bm{\mathcal{X}})(\theta)=-\partial_\theta \bm{\mathcal{X}}(\theta)^\perp\cdot\int_{\mathbb{S}}  \mathcal{T}(\bm{\mathcal{X}}(\theta)-\bm{\mathcal{X}}(\eta))\cdot \bm{F}(\eta)d\eta,
\end{aligned}
\end{equation}
with
\begin{equation}\label{FadAel}
\bm{\tilde{F}}_{\text{el}}=\frac{1}{k_0}\bm{F}_{\text{el}}, \hspace{0.2cm} A_e=\frac{k_0}{\mu_2+\mu_1}.
\end{equation}
We point out that the above boundary integral equation has a unique solution $\bm{F}$ given $\bm{F}_{{\rm el}}$ for sufficiently smooth $\bm{\mathcal{X}}$.

The Peskin problem thus reduces to the integral equations \eqref{Xteqn} and \eqref{viscosityjump} for $\bm{\mathcal{X}}$, where $G$, $\mathcal{T}$, $A_\mu$, $\bm{\mathcal{S}}$, $A_{e}$, and $\bm{\tilde{F}}_{\text{el}}$ are given by \eqref{G}, \eqref{Tijk}, \eqref{Amu}, \eqref{S}, and \eqref{FadAel}, with $\bm{F}_{\text{el}}$ given by \eqref{linearF}. Note also that, when $A_\mu=0$, i.e., $\mu_1=\mu_2$, equation \eqref{viscosityjump} reduces to $\bm{F}=2A_e\bm{F}_{\rm el}$, 
and we may just work with the single equation \eqref{Xteqn}.

%\subsection{Equilibrium}\label{sec:equilibrium}

Assuming that the stationary solutions are sufficiently smooth, it can be shown by an easy calculation that the 
only stationary solutions are those in which $\bm{\mc{X}}$ is a uniformly parametrized circle and the velocity field is $\bm{u}=0$ (see Section 5.1 of \cite{MR3935476}). %is still valid in our viscosity jump case since the velocity is zero in these equilibrium states. 
Thus, all of the equilibrium configurations of \eqref{Xteqn} and \eqref{viscosityjump} are spanned by \begin{equation}\label{vectors}
\bm{e_r}(\theta)=\begin{bmatrix}
\cos{\theta}\\
\sin{\theta}
\end{bmatrix},\hspace{0.2cm}\bm{e_t}(\theta)=\begin{bmatrix}
-\sin{\theta}\\
\cos{\theta}
\end{bmatrix},\hspace{0.2cm}\bm{e_1}=\begin{bmatrix}
1\\0
\end{bmatrix},\hspace{0.2cm}\bm{e_2}=\begin{bmatrix}
0\\1
\end{bmatrix}.
\end{equation}

\subsection{Critical Regularity and Related Results}

A general guideline for seeking the most natural and largest class of initial data for a given problem is to identify its scaling,
and consider a function space that is critical (invariant) with respect to this scaling.
The Peskin problem given above by \eqref{Xteqn} and \eqref{viscosityjump} is invariant under dilation, 
and thus to make proper sense of scaling one must first fix a reference scale.
Consider the scaling parameter $\lambda>0$. The
domain scales accordingly from the torus $\mathbb{S}=[-\pi,\pi]$ to $\mathbb{S}/\lambda=[-\pi/\lambda,\pi/\lambda]$.
Then, we choose as the reference scale the length of uniformly parameterized circles, which we pick to be $2\pi$.
Given the additional rotation and translation invariance of the problem, let us consider the particular choice
\begin{equation*}
    \bm{X}_{*,\lambda}(\theta)=\lambda^{-1}\bm{X}_*(\lambda \theta),
\end{equation*}
where $\bm{X}_*(\theta)=\bm{e_r}(\theta)$. Then,  the system \eqref{Xteqn}, \eqref{viscosityjump} is written in terms of the difference $\bm{X}(\theta,t)=\bm{\mathcal{X}}(\theta,t)-\bm{X}_*(\theta)$. It is not hard to check that the following scale-invariance holds: if $\bm{X}(\theta,t)$ is a solution, then $\bm{X}_\lambda(\theta,t)=\lambda^{-\alpha}\bm{X}(\lambda \theta,\lambda t)$ is also a solution if and only if $\alpha=1$. 

The analytical study of the Peskin problem was initiated in \cite{MR3882225,MR3935476}, in which the case of equal viscosity $\mu_1=\mu_2$ was studied. In \cite{MR3882225}, well-posedness is established in $\bm{\mc{X}}\in C([0,T];H^{5/2}(\mathbb{S})), T>0$ with initial data $\bm{\mc{X}}_0$ in $H^{5/2}(\mathbb{S})$ whereas in \cite{MR3935476}, the solution resides in $\bm{\mc{X}}\in C([0,T];C^{1,\alpha}(\mathbb{S})), \alpha>0, T>0$ with initial data $\bm{\mc{X}}_0$ in $h^{1,\alpha}(\mathbb{S}), \alpha>0$ (this space is the completion of smooth functions in the $C^{1,\alpha}$ norm). These spaces are subcritical with respect to the above scaling. Indeed, in the $L^2$ Sobolev scale, $H^{3/2}(\mathbb{S})$ (or $C([0,T];H^{3/2}(\mathbb{S}))$) is the critical space, whereas in the scale of (H\"older) continuous functions, $C^1(\mathbb{S})$ (or $C([0,T]; C^1(\mathbb{S}))$) is the critical scale. In this sense, the results in \cite{MR3935476} are only barely subcritical. The semilinear parabolic methods \cite{MR3012216} that are used in \cite{MR3935476} relies crucially on subcriticality, however, and do not seem to be readily extendible to the critical regularity exponent.

In this paper, we consider the Peskin problem in which the viscosities $\mu_1$ and $\mu_2$ are not necessarily equal. Furthermore, we establish a solution theory with initial data $\bm{\mc{X}}_0$ in the Wiener space $\mc{F}^{1,1}(\mathbb{S})$, the space of functions whose derivatives have a Fourier series that is absolutely summable (see Section \ref{Notation}). This space is critical with respect to the scaling of the Peskin problem identified above. 

In contrast to \cite{MR3882225,MR3935476}, our theory is restricted to initial data that is sufficiently close to the stationary states, i.e., the uniformly parametrized circles. The papers \cite{MR3882225,MR3935476} establish local-in-time well-posedness in their respective function spaces subject to the following arc-chord condition on the initial data:
\begin{equation}\label{arcchord}
\abs{\bm{\mc{X}}_0}_*\equiv \inf_{\theta,\eta\in \mathbb{S},\theta\neq\eta} \frac{\abs{\bm{\mc{X}}_0(\theta)-\bm{\mc{X}}_0(\eta)}}{\abs{\theta-\eta}}>0.
\end{equation}
In this sense, our results might be better compared to the results on asymptotic stability of the uniformly parametrized circle obtained in  \cite{MR3882225,MR3935476}.
The uniformly parametrized circle is proved to be exponentially stable in the above $L^2$ Sobolev and H\"older scales respectively, and in the latter paper, it is proved that the solution is in $C^\infty(\mathbb{S})$ for all positive time. In this paper, we improve upon this result to prove that the solution is analytic for positive time.

Local-in-time well-posedness for initial data in $\mc{F}^{1,1}$ merely satisfying condition \eqref{arcchord} is an open question that we do not address in this paper. It is notable, however, that the arc-chord condition \eqref{arcchord} is invariant under the scaling described above. In \cite{MR3935476}, it is shown that, if the solution ceases to exist as $t$ approaches $t_*<\infty$, then following must hold:
\begin{equation*}
\lim_{t\to t_*} \varrho_\alpha(\bm{\mc{X}})=\infty, \; \varrho_\alpha(\bm{\mc{X}})=\frac{\norm{\p_\theta \bm{\mc{X}}}_{C^\alpha}}{\abs{\bm{\mc{X}}}_*} \text{ for any } \alpha>0.
\end{equation*}
On the other hand, if $\varrho_\alpha(\bm{\mc{X}})$ remains bounded for all time for some $\alpha>0$, then $\bm{\mc{X}}$ must converge to a uniformly parametrized circle. A similar criterion, in which the numerator of $\varrho_\alpha$ is replaced with a critical norm such as the $\mc{F}^{1,1}$ norm, would be a major improvement that should lead to a better understanding of the global-in-time dynamics of the Peskin problem.

Another extension of the Peskin problem is to consider the following elastic force in place of \eqref{linearF}:
\begin{equation}\label{nonlinearF}
 \bm{F}_{\rm el}=\p_\theta\paren{\mc{T}(\abs{\p_\theta \bm{\mathcal{X}}})\frac{\p_\theta\bm{\mathcal{X}}}{\abs{\p_\theta \bm{\mathcal{X}}}}}
 \end{equation}
 where $\mc{T}(s)$ is a tension coefficient that must satisfy the structure condition $\mc{T}>0$ and $d\mc{T}/ds>0$.
 Note that the above expression is reduced to \eqref{linearF} if we take $\mc{T}(s)=k_0s$, hence $k_0=\mathcal{T}(1)=d\mathcal{T}/ds$.
 In the case of equal viscosity $\mu_1=\mu_2$, a local-in-time well-posedness theory for initial data satisfying \eqref{arcchord} under the more general force \eqref{nonlinearF}
 is established in \cite{rodenberg_thesis} in the H\"older scale
 similarly to \cite{MR3935476}, using nonlinear parabolic methods \cite{MR3012216}. It is expected that the results and methods of this paper 
 can be extended to this more general case.
 
Finally, we mention \cite{1904.09528} in which the author considers a regularization of the Peskin problem inspired by the immersed boundary method, extending the techniques in \cite{MR3882225}. Such studies may form the basis for numerical analysis of the Peskin problem. 
 
The surface tension problem, in which the interface is not elastic but only exerts a surface tension, 
may be the most closely related class of problems for which there are extensive analytical studies. We note that our problem is distinct from 
the surface tension problem; in contrast to an elastic interface considered in the Peskin problem, an interface with surface tension only 
does not resist stretching. This difference manifests itself in the different energy dissipation laws satisfied by the respective problems 
(see Section 1.1 of \cite{MR3935476}). We refer the reader to \cite{MR2571494,MR3524106,MR2953069} 
for an extensive survey of the analytical study of the surface tension problem.

There is also an increasing number of analytical studies on fluid-structure interaction problems in which an elastic structure interacts with a fluid, 
related to the Peskin problem considered here \cite{MR3656704,MR2349865,MR2644917,MR3608168,MR3017292,MR2812947,MR2898775,2005.12036,boulakia2012existence}.  The equations dealt with in these studies are typically more complicated than 
those of the Peskin problem; the sharp results obtained for the simpler Peskin problem should serve as a guide to what is possibly true 
for the more complicated model problems.
 
From an analytical perspective, the Muskat problem is perhaps the closest Nonlinear PDE to our problem for which there is a large body of analytical studies.  However it models a very different physical setting: two immiscible and incompressible fluids in a porous media governed by Darcy's law. 
On the other hand, for a nearly flat interface in the presence of gravity both problems have the same symbol at the linear level.
The authors of \cite{CCGS13} introduced the use of the Wiener algebra to obtain global well-posedness results for the Muskat problem at critical regularity. Moreover, the size restriction on the initial data was given by an explicit constant that is independent of any physical parameter. These techniques were extended in \cite{CCGRS16} and  \cite{1902.02318} to deal with the three-dimensional setting and the case of viscosity jumps, respectively. Other results for the Muskat problem that only require \textit{medium size} initial data in critical spaces (as opposed to the more standard arbitrarily small data condition) \cite{Cam17,Cam20} rely on the maximum principle; these methods have thus far not been shown to be well-suited to deal with viscosity contrasts.

In this paper, we will use spaces related to the Wiener algebra that allow us to perform careful and detailed estimates on the nonlinear terms to control explicitly the size constraint on the initial data (see Figure \ref{fig:pic}).
As opposed to the Muskat problem, here the problem is not only described by the shape of the interface: the parametrization corresponds to the distribution of material points, and thus it matters.  As a consequence, we have to develop further techniques to deal with a system of equations (for both components of the curve).  Interestingly, a careful understanding of the linear system, together with an appropriate change of framework, allows us to decouple the frequencies associated to the projection of the interface onto the space of equilibria from the others.  Indeed, we overcome a major difficulty of
the very recent result \cite{GGPS19}, that deals with the Muskat problem for closed interfaces (i.e. bubbles), and obtain the global existence and uniqueness result for the Peskin problem with viscosity contrast at critical regularity.

\subsection{Notation and Functional Spaces}\label{Notation}

We summarize here the notation and functional spaces that will be used throughout the paper.

For a vector $\bm{x}=(x_1,x_2)^{\rm T}\in \mathbb{C}^2$ we denote
\begin{equation*}
\bm{x}^\perp \eqdef
\mc{R}\bm{x},\quad \mc{R} \eqdef\begin{bmatrix} 0 & -1 \\ 1 & 0 \end{bmatrix},\quad \mc{R}^{-1}=\begin{bmatrix} 0 & 1 \\ -1 & 0 \end{bmatrix}.
\end{equation*}
We denote the Euclidean norm as
\begin{equation*}
|\bm{x}|=\sqrt{\bm{x}^T\overline{\bm{x}}}=\sqrt{|x_1|^2+|x_2|^2},
\end{equation*}
and for a matrix $A=(a_{ij})_{1\leq i,j\leq 2}$ we use the induced matrix norm
\begin{equation}\label{matrixnorm}
\|A\|=\sigma_{\max}(A),
\end{equation}
where $\sigma_{\max}(A)$ is the largest singular value of $A$.
For a vector such as $\bm{X}_*$ we will write $\bm{X}_{*,j}$ to be the $j$-th component of that vector.

We now define the periodic Hilbert transform of a function $f$ with period $2P$ as 
\begin{equation}\label{defHilbertTransform}
    \mH(f)(\theta)\eqdef\frac{1}{2P}\pv\int_{-P}^{P}\frac{f(\theta-\eta)}{\tan{\Big(\frac{\eta}{2P/\pi}\Big)}} d\eta=\frac{1}{4P}\pv\int_{-P}^{P}\frac{f(\theta-\eta)-f(\theta+\eta)}{\tan{\Big(\frac{\eta}{2P/\pi}\Big)}} d\eta.
\end{equation}
Unless stated otherwise, throughout the paper we will use the case $P=\pi$. In this case, we also define the Fourier transform of a periodic function $f$ with domain $\mathbb{S}=[-\pi, \pi]$ as: 
\begin{equation}\notag %\label{defFourierTransform}
    \mathcal{F}(f)(k)\eqdef\widehat{f}(k)=\frac{1}{2\pi}\int_{-\pi}^\pi f(\theta)e^{-i k\theta}d\theta, \quad k \in \mathbb{Z}.
\end{equation}
Further $\mathcal{F}(\mH(f))(k) = -i \sign(k) \widehat{f}(k)$.  Then we define the operator $\Lambda$ using the Fourier transform as 
$
\mathcal{F}(\Lambda f)(k)\eqdef |k| \widehat{f}(k).
$
And we observe that $\mH(\partial_\theta f )(\theta)= \Lambda f$.    

We denote $f*g$ as the standard convolution of $f$ and $g$.
We use the iterated convolution notation
\begin{equation}\label{iteratedConvlution}
*^kf = \underbrace{f *  \cdots *f}_\text{$k-1$ convolutions of $k$ copies of $f$}
\end{equation}
Thus for instance $*^2f = f*f$.  

We also use the following notation for the discrete delta function,
$\delta_a(k)$, which is the function which is $=1$ when $k=a$ and $=0$ elsewhere.
Throughout the paper we will further denote
\begin{equation}\label{delta2notation}
\delta_{1,-1}(k)=\delta_1(k)+\delta_{-1}(k).
\end{equation} 
We further define the high frequency cut-off operator $\mathcal{J}_M$ for $M\ge 0$ by
\begin{equation}\label{CutOffHigh}
    \widehat{\mathcal{J}_M X}(k) \eqdef 1_{|k|\leq M}\widehat{X}(k).
\end{equation}
where $1_A$ is the standard indicator function of the set $A$, so that $1_A(x)=1$ if $x\in A$ and $1_A(x)=0$ if $x\notin A$.  

For two vectors $\bm{X}(\theta), \bm{Y}(\theta) \in \mathbb{R}^2$  we define 
\begin{equation}\label{innerProductNotation}
\dual{\bm{X}}{\bm{Y}} =\int_\mathbb{S}\bm{X}(\theta)\cdot\bm{Y}(\theta)d\theta.
\end{equation}

Generalizing the Wiener algebra of functions with  absolutely convergent Fourier series as in \cite{GGPS19},
we further define the homogeneous $\dot{\mathcal{F}}^{s,1}_\nu$ and nonhomogeneous $\mathcal{F}^{s,1}_\nu$ norms as
\begin{equation}\label{fsneonenu}
\|\bm{X}\|_{\dot{\mathcal{F}}^{s,1}_\nu}=\sum_{k\in\mathbb{Z}\setminus\{0\}}e^{\nu(t) |k| }|k|^s|\widehat{\bm{X}}(k)|,\qquad s\in \mathbb{R},
\end{equation}
and
\begin{equation}\label{foneonenuinh}
\|\bm{X}\|_{\mathcal{F}^{s,1}_\nu}=|\widehat{\bm{X}}(0)|+\sum_{k\in\mathbb{Z}\setminus\{0\}}e^{\nu(t) |k| }|k|^s|\widehat{\bm{X}}(k)|,\qquad s\geq0,
\end{equation}
with \begin{equation}\label{nu}
    \nu(t)=\nu_m\frac{t}{1+t}\ge 0,
\end{equation} and $\nu_m>0$ chosen arbitrarily small. Note that $\nu(0)=0$, $\nu(t)>0$ for all $t>0$.  Further $\nu'(t)\le \nu_m$ and $\nu(t)\le \nu_m$ bounded for all time.
When $\nu\equiv0$, we denote $\dot{\mathcal{F}}^{s,1}_0=\dot{\mathcal{F}}^{s,1}$ and $\mathcal{F}^{s,1}_0=\mathcal{F}^{s,1}$.
These are the main norms that we will use in this paper.  Note that when $s=1$ the $\dot{\mathcal{F}}^{s,1}$ norm is critical for the Peskin problem.

In this paper we write $A \lesssim B$ if $A \le C B$ for some inessential constant $C>0$.  We also write $A \approx B$ if both $A \lesssim B$ and $B \lesssim A$ hold.
Throughout the paper, we will denote
\begin{equation}\label{constant.def}
    C_i=C_i(\xoneonenu)=C_i\left(\xoneonenu; \nu_m\right)>0, \quad (i=1,2,\ldots),
\end{equation}
as functions that are increasing in $\xoneonenu\geq0$ and might depend on the analyticity constant $\nu_m$, 
 with the properties that $C_i(\xoneonenu)\approx 1$  for all  $\nu_m\geq0$ and $\lim_{\xoneonenu\to0^+}C_i(\xoneonenu;0)= 1$. We will also denote 
 \begin{equation*}
 D_i=D_i(\xoneonenu)=D_i\left(\xoneonenu; A_\mu,\nu_m\right)>0, \quad (i=1,2,\ldots),  
 \end{equation*}
 as functions that are increasing in $\xoneonenu\geq0$ and might depend on the physical parameter $A_\mu$ and the analyticity constant $\nu_m$, 
 with the properties that $D_i(\xoneonenu)\approx 1$  for all  $A_\mu\in(-1,1)$ and all $\nu_m\geq0$, and $\lim_{\xoneonenu\to0^+}D_i(\xoneonenu;0,0)= 1$.

\subsection{Main Results}\label{MainResults}
In this section we will state the main result of this paper: namely, that  membranes whose initial interface has critical regularity (in terms of the scaling of the problem), and that are not too far from an equilibrium configuration, become instantaneously analytic and converge exponentially fast to the equilibrium. Without loss of generality, we assume that the initial area enclosed by the membrane is $\pi$. We get the result under an explicit medium-size condition for the initial deviation and for general viscosity contrast $A_\mu\in(-1,1)$.

\begin{defn}[Strong solution]\label{strongsol}
Let
\begin{equation*}
    \bm{\mathcal{X}}\in  C([0,T];\mathcal{F}^{1,1}) \cap C^1((0,T];\mathcal{F}^{0,1}), 
\end{equation*}
and 
\begin{equation*}
    |\bm{\mathcal{X}}|_*(t)=\inf_{\theta, \eta\in\mathbb{S},\theta\neq\eta}\frac{|\bm{\mathcal{X}}(\theta,t)-\bm{\mathcal{X}}(\eta,t)|}{|\theta-\eta|}>0,
\end{equation*}
for $0\leq t\leq T$.
Then, $\bm{\mathcal{X}}$ is a {\em{strong solution}} to the  viscosity-contrast Peskin problem with initial value $\bm{\mathcal{X}}(0)=\bm{\mathcal{X}}_0$ if it satisfies \eqref{Xteqn}, \eqref{viscosityjump} for $0<t\leq T$ and $\bm{\mathcal{X}}(t)\to \bm{\mathcal{X}}_0$ in $\mathcal{F}^{1,1}$ as $t\to0$.  
\end{defn}

\begin{thm}[Main Result]\label{MainTheorem}
	Let $A_\mu\in(-1,1)$ and  $\bm{\mathcal{X}}_0\in \mathcal{F}^{1,1}$. Let $\bm{X}_{0,c}$  be the projection of $\bm{\mathcal{X}}_0$ onto the vector space spanned by \eqref{vectors} and $\bm{X}_0=\bm{\mathcal{X}}_0-\bm{X}_{0,c}$, thus $\bm{X}_0$ is mean zero and $\widehat{\bm{\mathcal{X}}}_0(0)=\widehat{\bm{X}}_{0,c}(0)$.
	Assume that initially the deviation $\bm{X}_{0}$ satisfies the medium-size condition 
    \begin{equation}\label{conditiontheorem}
     \|\bm{X}_0\|_{\dot{\mathcal{F}}^{1,1}}<k(A_\mu),
    \end{equation}
    where  $k(A_\mu)>0$ is defined in \eqref{kdef} (see also \eqref{kmuform} and Figure \ref{fig:pic}), and that the area enclosed by $\bm{\mathcal{X}}_0$ is $\pi$.
	Then, for any $T>0$, there exists a {\em{unique global strong solution}}   $\bm{\mathcal{X}}(t)$ to the system  \eqref{Xteqn} and \eqref{viscosityjump}, which lies in the space
	\begin{equation*}	
	\bm{\mathcal{X}}\in  C([0,T];\mathcal{F}^{1,1}_\nu) \cap C^1((0,T];\mathcal{F}^{0,1}_\nu) \cap L^1([0,T];\dot{\mathcal{F}}^{2,1}_\nu),
	\end{equation*}
	with $\nu(t)$ given by \eqref{nu}.  
	In particular, it becomes instantaneously analytic. Moreover, the following energy inequality is satisfied for $0\leq t\leq T$:
	\begin{equation}\label{balanceX}
    	\xoneonenu(t)+  \frac{A_e}4 \dissconst\int_0^t \xtwoonenu(\tau) d\tau \leq \|\bm{X}_0\|_{\foneone},
	\end{equation}
	with $\dissconst=\dissconst\big(\|\bm{X}_0\|_{\foneone},A_\mu,\nu_m\big)>0$ defined in \eqref{C}. 
       In addition, 
	\begin{equation}\label{decay2}
	    \begin{aligned}
	        \xoneonenu(t)\leq \|\bm{X}_0\|_{\foneone}e^{-\frac{A_e}4 \dissconst  t}.
	    \end{aligned}
	\end{equation}
	The zero frequency $\widehat{\bm{X}_c}(0)$ remains uniformly bounded for all times as follows 
	\begin{equation*}
	     |\widehat{\bm{X}_c}(0)|\leq |\widehat{\bm{X}}_{0,c}(0)|+
	     \tilde{C}
	     \|\bm{X}_0\|_{\foneone}^2,
	\end{equation*}
with $\tilde{C}=\tilde{C}\big(\|\bm{X}_0\|_{\foneone},A_\mu\big)>0$ given in \eqref{Ctilde}, while
\begin{equation}\label{auxone}
    1-\frac12\|\bm{X}\|_{\foneonenu}^2\leq |\widehat{\bm{X}}_{c}(1)|^2\leq 1+\frac12\|\bm{X}\|_{\foneonenu}^2.
\end{equation}
\end{thm}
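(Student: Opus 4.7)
My plan is to derive a closed a priori estimate for the deviation $\bm{X}=\bm{\mathcal{X}}-\bm{X}_*$ in the analytic Wiener space $\foneonenu$ and then extend local solutions globally by a continuation argument. First I would solve the implicit equation \eqref{viscosityjump} for the force $\bm{F}$ via a Neumann expansion: under a smallness assumption on $\xoneonenu$, the operator $\bm{\mathcal{S}}(\cdot,\bm{\mathcal{X}})$ acts boundedly on $\mathcal{F}^{0,1}_\nu$ with operator norm controlled by an increasing function of $\xoneonenu$, so for every $|A_\mu|<1$ the series $\bm{F}=2A_e\sum_{n\ge0}(2A_\mu \bm{\mathcal{S}})^n\bm{\tilde F}_{\rm el}$ converges and expresses $\bm{F}$ as a rapidly convergent sum of multilinear expressions in $\bm{X}$, each inheriting the Banach-algebra structure of the Wiener norm through \eqref{iteratedConvlution}. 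Substituting back into \eqref{Xteqn} yields an evolution equation schematically of the form $\partial_t \bm{X}=L\bm{X}+N(\bm{X})$.

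Next I would diagonalize $L$ in Fourier by Taylor-expanding the Stokeslet $G(\Delta\bm{\mathcal{X}})$ around the uniformly parametrized circle. Using $\bm{F}_{\rm el}=k_0\partial_\theta^2\bm{\mathcal{X}}$, the leading linear operator on the nonequilibrium modes behaves like $-A_e\Lambda$. Because the vector space \eqref{vectors} spans precisely Fourier modes $k=0,\pm1$, I would peel those modes off and treat them separately: the zero mode $\widehat{\bm{X}_c}(0)$ is driven only by the quadratic-and-higher nonlinearity and thus grows at most like $\xoneonenu^2$, producing the bound by $\tilde C\|\bm{X}_0\|_{\foneone}^2$; the modes at $k=\pm1$ are pinned via the fixed-area condition $\mathrm{area}(\bm{\mathcal{X}})=\pi$, which expressed in Fourier is a quadratic identity in $\widehat{\bm{X}}$ that directly yields \eqref{auxone}. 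This frequency decoupling is the mechanism that allows the main estimate to close despite the four-dimensional kernel of $L$.

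For the main estimate I would differentiate $\xoneonenu$ in time, obtaining a $\nu'(t)\xtwoonenu$ contribution from the analytic weight together with terms of the form $|k|e^{\nu(t)|k|}|\widehat{\partial_t\bm{X}}(k)|$ coming from the equation. Substituting the Neumann/Stokeslet expansions and estimating each multilinear contribution by a product of Wiener norms via \eqref{iteratedConvlution}, one sees that each nonlinear piece costs one derivative but gains one factor of $\xoneonenu$, so the full nonlinearity is dominated by $\xtwoonenu$ times an increasing function of $\xoneonenu$. Choosing $\nu_m$ small compared to $A_e$ absorbs both the analyticity term and the nonlinear term into the linear dissipation and yields
\begin{equation*}
\frac{d}{dt}\xoneonenu + \frac{A_e}{4}\dissconst\,\xtwoonenu \le 0
\end{equation*}
whenever $\xoneonenu(t)<k(A_\mu)$. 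Integrating gives \eqref{balanceX}, and since $\xtwoonenu\ge\xoneonenu$ on the torus, a Gr\"onwall argument gives the exponential decay \eqref{decay2}; because $\nu(0)=0$, this simultaneously upgrades $\mathcal{F}^{1,1}$ initial data to $\mathcal{F}^{1,1}_\nu$ for $t>0$, which is the instant analyticity claim. The hardest part will be tracking constants carefully through the Neumann series and the Taylor expansions of $G$ and $\mathcal{T}$ around the circle, and then organizing the resulting geometric sums so that the admissible smallness condition matches the explicit threshold $k(A_\mu)$.

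Finally, the a priori bound is promoted to a global strong solution by a standard continuation scheme: truncate the initial data via the cutoff $\mathcal{J}_M$ from \eqref{CutOffHigh}, solve locally by a fixed-point argument in $\mathcal{F}^{1,1}_\nu$ based on the same multilinear estimates, and then extend indefinitely because the closed inequality prevents $\xoneonenu$ from ever reaching $k(A_\mu)$. Uniqueness follows by applying the analogous multilinear estimate to the difference of two solutions and invoking Gr\"onwall.
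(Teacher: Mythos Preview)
Your broad outline---linearize around a circle, extract a parabolic symbol, absorb the nonlinearity by the dissipation in the Wiener scale, and close via regularization plus Gr\"onwall---matches the paper. But there is a genuine gap in how you handle the $k=\pm 1$ modes, and it is exactly the point where the paper has to work hardest.

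You write that ``the vector space \eqref{vectors} spans precisely Fourier modes $k=0,\pm1$'' and that ``the modes at $k=\pm1$ are pinned via the fixed-area condition.'' Neither claim is correct. For an $\mathbb{R}^2$-valued function, Fourier modes $k=0,\pm1$ form a $6$-real-dimensional space, while the equilibrium manifold \eqref{vectors} is only $4$-dimensional: there remains a $2$-real-dimensional (one complex) direction at $k=\pm1$ that is \emph{not} neutral and must be part of $\bm{X}$ and shown to decay. The area constraint is a single scalar identity and can only pin one real degree of freedom (the radius $R$ of $\bm{X}_c$); it does not control the rest of the $k=\pm1$ content. Relatedly, the linear operator is not ``$-A_e\Lambda$'': as the paper computes in \eqref{linearization}, it is $-\tfrac{A_e}{2}(\Lambda+\mathcal{H}\mathcal{R}^{-1})$, which at mode $k$ is a $2\times 2$ matrix with eigenvalues $k\pm1$. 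At $k=1$ this has a zero eigenvalue (the circle direction) and a nonzero eigenvalue $2$ (the decaying direction). The paper isolates these via the unitary change of variables $\widehat{\bm{Y}}(k)=P(k)^{-1}\widehat{\bm{X}}(k)$ in \eqref{Y}, which makes the kernel direction exactly $\widehat{Y}_2(1)$ and lets $\widehat{Y}_1(1)$ decay with rate $A_e$; only then does the energy inequality \eqref{diff} close. Without this diagonalization (or something equivalent), peeling off ``all of $k=0,\pm1$'' removes a direction that should be decaying, and the energy estimate for the remaining $|k|\ge 2$ modes does not by itself give \eqref{balanceX}.

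A second, smaller point: your Neumann series for $\bm{F}$ requires $\|2A_\mu\bm{\mathcal{S}}(\cdot,\bm{\mathcal{X}})\|_{\mathcal{F}^{0,1}_\nu\to\mathcal{F}^{0,1}_\nu}<1$, but the operator norm of $\bm{\mathcal{S}}$ does \emph{not} tend to zero as $\xoneonenu\to 0$; on the circle it is exactly $1/2$ (acting only on $|k|\le 1$). So the series does converge for all $|A_\mu|<1$ and small $\xoneonenu$, but for the reason $\|2A_\mu\bm{\mathcal{S}}\|\le |A_\mu|+O(\xoneonenu)$, not the one you give. The paper sidesteps this entirely: it splits $\bm{F}=\bm{F}_0+\bm{F}_L+\bm{F}_N$, computes $\bm{F}_0,\bm{F}_L$ explicitly (equations \eqref{F0}, \eqref{F1}), and for $\bm{F}_N$ inverts $(I-2A_\mu\bm{\mathcal{S}}_0)$ \emph{exactly} on the Fourier side (equations around \eqref{aux5}--\eqref{FNbound2}), which gives sharper constants and is what produces the explicit threshold $k(A_\mu)$.
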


We remark that the decay to zero of the deviation $\bm{X}$ in \eqref{decay2} together with \eqref{auxone} shows the exponentially fast convergence to a uniformly parametrized circle with the same area as the initial one.

\begin{figure}[h]\centering
	\includegraphics[width=.85\textwidth]{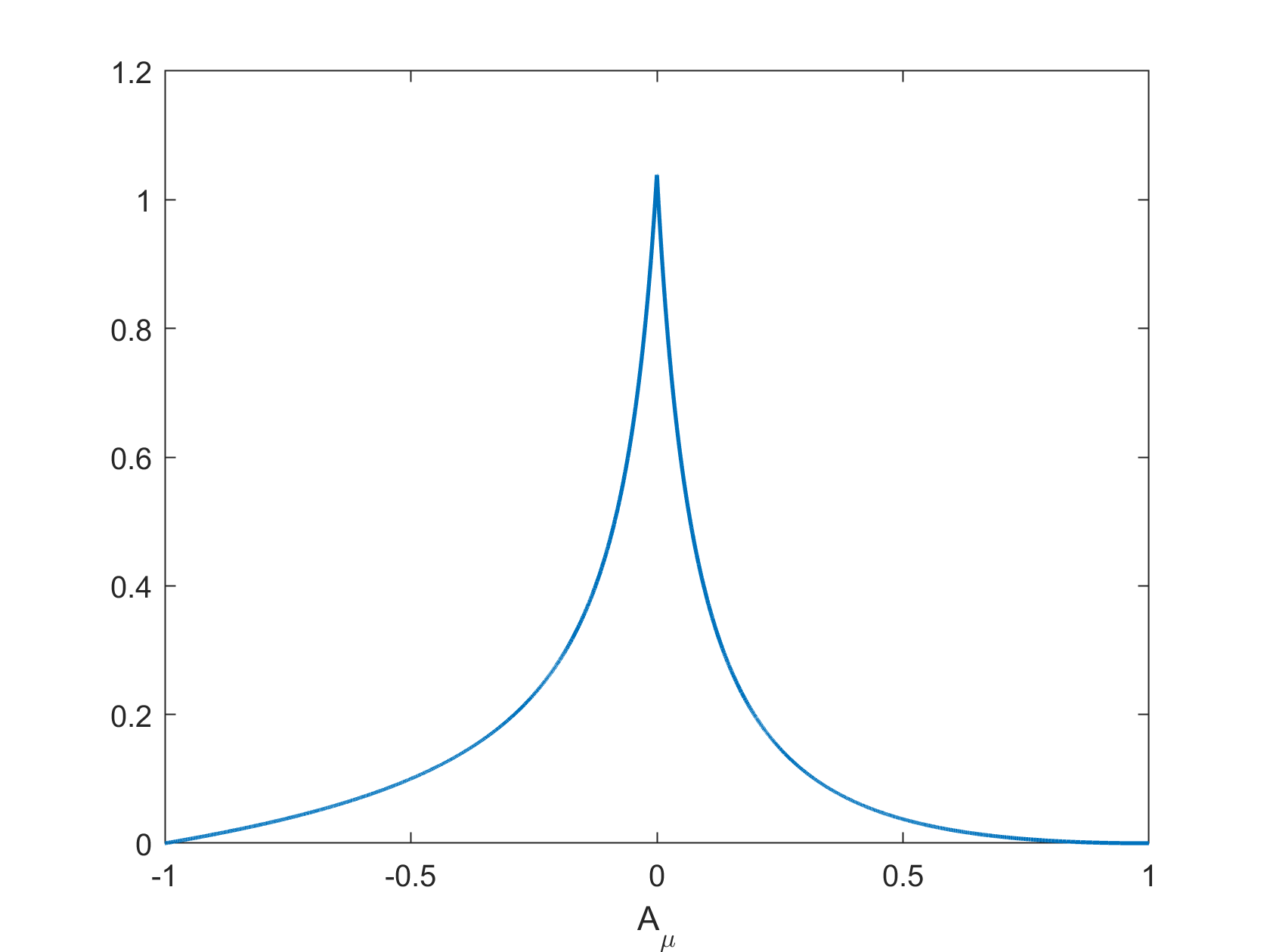}\vspace{-0.4cm}
	\caption{$10^3 k(A_\mu)$}\label{fig:pic}
\end{figure}

%\todo{Bob comment: should we add a remark about the cases $A_\mu = \pm 1$?}
\begin{remark} In our results, we assume that both viscosities $\mu_1$ and $\mu_2$ are positive and hence $-1<A_\mu<1$. We remark on the endpoint cases of $A_\mu=\pm 1$, which formally correspond to the cases when $\mu_1=0$ or $\mu_2=0$. As can be seen from Fig. \ref{fig:pic}, the allowed size of the deviation from $\bm{X}_0$ tends to $0$ as $A_\mu\to \pm 1$, which may indicate potential difficulties in formulating a well-posed mathematical problem for the endpoint cases. From a physical standpoint, it does not make sense to set the viscosity to $0$ in either $\Omega_1$ or $\Omega_2$, and thus a proper treatment of these endpoint cases will require a rethinking of the physical situation under consideration. The case $A_{\mu}=-1$ or $\mu_2=0$ may be thought of as corresponding to the problem in which a droplet of Stokesian fluid is floating in vacuum. One significant difference between this and the Peskin problem is that in the former problem a droplet in linear translation or rigid rotation experiences no external forces. The force balance and continuity equations will thus have to be supplemented by auxiliary conditions that assure uniqueness, after which this problem is likely to be well-posed. In the case $A_\mu=1$ or $\mu_1=0$, $\Omega_1$ might be considered to be vacuum. It is not clear if this problem is well-posed. We will not pursue these issues further.
%\Red{ADD REMARK ABOUT CASES $A_\mu = \pm 1$?} %One case is ill-posed.  The other case has non-uniqueness and is a fundamentally different problem.   

\end{remark}

%\Red{ALSO TO DO ADD CITATION TO \cite{1904.09528} SOMEWHERE TOO...}

\subsection{Outline}

The rest of the paper is structured as follows.  In Section \ref{formulation}, we first decompose in \ref{nonlinearexp} the system \eqref{Xteqn}, \eqref{viscosityjump} into zero order, linear, and nonlinear parts around the equilibrium configuration, and then in Section \ref{secLinearization} we perform the linearization of the problem and show its parabolic structure. Section \ref{evolF11} shows how this structure leads to dissipation and in Section \ref{completesys} we summarize the system of equations in its final form.
Section \ref{sec:secanalytic} contains the crucial nonlinear estimates needed to prove Theorem \ref{MainTheorem}. Finally, Section \ref{secMain} is dedicated to the proof of Theorem \ref{MainTheorem} via a regularization argument and also shows the uniqueness of the solutions.

% \addtocontents{toc}{\vspace{0.2cm}}
\section{Linearization around the steady state}\label{formulation}

We will linearize the system \eqref{G}-\eqref{FadAel}, with $\bm{F}_{\text{el}}$ given by \eqref{linearF}, around a time-dependent uniformly parametrized circle with center $(c(t),d(t))$ and radius $R(t)$: 
\begin{equation}\label{unifcircles}
\bm{X}_c(\theta,t)=a(t)\bm{e_r}(\theta)+b(t)\bm{e_t}(\theta)+c(t)\bm{e_1}+d(t)\bm{e_2},
\end{equation}
\begin{equation}\notag
R^2(t)=a^2(t)+b^2(t),
\end{equation}
where $a(t)$, $b(t)$, $c(t)$ and $d(t)$ are arbitrary time dependent functions and $\bm{e_r}(\theta)$, $\bm{e_t}(\theta)$, $\bm{e_1}(\theta)$, $\bm{e_2}(\theta)$ are defined in \eqref{vectors}. 
For notational convenience, we will suppress the time dependence of the coefficients.
\vspace{0.2cm}

\subsection{Nonlinear Expansion}\label{nonlinearexp}

We will denote by $\bm{X}(\theta)$ the deviation from the circle $\bm{X}_c(\theta)$ as $\bm{X}(\theta)=\bm{\mathcal{X}}(\theta)-\bm{X}_c(\theta)$.  We define further
$$
\Delta \bm{X} \eqdef \bm{X}(\theta)-\bm{X}(\eta),
$$
and
\begin{equation}\label{deltaeta}
\Delta_\eta \bm{X}(\theta) \eqdef \frac{\bm{X}(\theta)-\bm{X}(\eta)}{2\sin{\left(\frac{\theta-\eta}{2}\right)}}.
\end{equation}
In particular, we have
\begin{equation*}
\Delta_\eta \bm{X}_c(\theta)=a\hspace{0.05cm}\bm{e_t}\thetaeta-b\hspace{0.05cm} \bm{e_r}\thetaeta,
\end{equation*}
since
\begin{equation*}
\begin{aligned}
\Delta_\eta \bm{e_r}(\theta)&=\begin{bmatrix}
-\sin{\left(\frac{\theta+\eta}{2}\right)}\\
\cos{\left(\frac{\theta+\eta}{2}\right)}
\end{bmatrix}\!=\bm{e_t}\thetaeta,\\
\Delta_\eta \bm{e_t}(\theta)&=\begin{bmatrix}
-\cos{\left(\frac{\theta+\eta}{2}\right)}\\
-\sin{\left(\frac{\theta+\eta}{2}\right)}
\end{bmatrix}\!=-\bm{e_r}\thetaeta,
\end{aligned}
\end{equation*}
where we have used the trigonometric identities 
\begin{equation*}
\begin{aligned}
\sin{(\theta)}-\sin{(\eta)}&=2\sin{\Big(\frac{\theta-\eta}2\Big)}\cos{\Big(\frac{\theta+\eta}{2}\Big)},\\
\cos{(\theta)}-\cos{(\eta)}&=-2\sin{\Big(\frac{\theta-\eta}2\Big)}\sin{\Big(\frac{\theta+\eta}{2}\Big)}.
\end{aligned}
\end{equation*}
Recalling \eqref{unifcircles} and using the identities
\begin{equation*}
\begin{aligned}
\p_\theta\bm{e_r}(\theta)&=\bm{e_r}(\theta)^\perp=\bm{e_t}(\theta),\\ \p_\theta\bm{e_t}(\theta)&=\bm{e_t}(\theta)^\perp=-\bm{e_r}(\theta),
\end{aligned}
\end{equation*}
one has that
\begin{equation*}
\begin{aligned}
\p_\theta\bm{X}_c(\theta)&=a\bm{e_t}(\theta)-b\bm{e_r}(\theta),\\
\p_\theta\bm{X}_c(\theta)^\perp&=-a\bm{e_r}(\theta)-b\bm{e_t}(\theta).
\end{aligned}
\end{equation*}
The trigonometric identities $\cos(a+b)=\cos(a)\cos(b)-\sin(a)\sin(b)$ and $\sin(a+b)=\sin(a)\cos(b)-\cos(a)\sin(b)$ further give that
\begin{equation*}
\begin{aligned}
\bm{e_r}(\theta)\cdot\bm{e_t}\Big(\frac{\theta+\eta}{2}\Big)&=\sin{\Big(\frac{\theta-\eta}{2}\Big)},\\
\bm{e_r}(\theta)\cdot\bm{e_r}\Big(\frac{\theta+\eta}{2}\Big)&=\cos{\Big(\frac{\theta-\eta}{2}\Big)},\\
\bm{e_t}(\theta)\cdot\bm{e_t}\Big(\frac{\theta+\eta}{2}\Big)&=\cos{\Big(\frac{\theta-\eta}{2}\Big)},\\
\bm{e_t}(\theta)\cdot\bm{e_r}\Big(\frac{\theta+\eta}{2}\Big)&=-\sin{\Big(\frac{\theta-\eta}{2}\Big)}.
\end{aligned}
\end{equation*}
These calculations imply the following computations for a circle that will be used frequently throughout the paper
\begin{equation}\label{comput1}
\partial_\theta \bm{X}_c(\theta)^\perp\cdot\Delta_\eta \bm{X}_c(\theta)=-R^2\sin{\Big(\frac{\theta-\eta}{2}\Big)},
\end{equation}
\begin{equation}\label{comput2} \partial_\theta \bm{X}_c(\theta)\cdot\Delta_\eta \bm{X}_c(\theta)=R^2\cos{\Big(\frac{\theta-\eta}{2}\Big)},
\end{equation}
\begin{multline}\label{comput3}
\Delta_\eta \bm{X}_c(\theta)\otimes \Delta_\eta \bm{X}_c(\theta)=\frac{a^2}2\begin{bmatrix}
1-\cos{(\theta+\eta)}&-\sin{(\theta+\eta)}\\
-\sin{(\theta+\eta)}&1+\cos{(\theta+\eta)}
\end{bmatrix}\\
+\frac{b^2}2\begin{bmatrix}
1\!+\!\cos{(\theta\!+\!\eta)}&\sin{(\theta\!+\!\eta)}\\
\sin{(\theta\!+\!\eta)}&1\!-\!\cos{(\theta\!+\!\eta)}
\end{bmatrix}\!+\!ab\begin{bmatrix}
\sin{(\theta\!+\!\eta)}&\!-\!\cos{(\theta\!+\!\eta)}\\
-\cos{(\theta\!+\!\eta)}&\!-\!\sin{(\theta\!+\!\eta)}
\end{bmatrix}.
\end{multline}
The matrices in the last line above have been simplified using the identities $\sin^2(a)=\frac{1-\cos(2a)}2$, $\cos^2(a)=\frac{1+\cos(2a)}2$, and $\sin(2a)=2\sin(a)\cos(a)$.

Next, we perform a Taylor expansion of the nonlinear terms around the a time-dependent uniformly parametrized circle \eqref{unifcircles} under the assumption that $|\Delta_\eta \bm{X}(\theta)|<1$. First, we start with the magnitude of the curve
\begin{equation*}
\begin{aligned}
|\Delta \bm{X}\!+\!\Delta \bm{X}_c|^2\!=\!4R^2\sin^2{\left(\frac{\theta\!-\!\eta}{2}\right)}\Big(1\!+\!\frac2{R^2}\Delta_\eta \bm{X}_c(\theta)\cdot\Delta_\eta \bm{X}(\theta)\!+\!\frac1{R^2}|\Delta_\eta \bm{X}(\theta)|^2\Big).
\end{aligned}
\end{equation*}
Recalling the expression for $G(\Delta \bm{\mathcal{X}})$ in \eqref{G}, we expand each term
\begin{equation}\label{G1exp}
\begin{aligned}
\log{|\Delta \bm{X}\!+\!\Delta \bm{X}_c|}&\!=\!\log{\!\left(\!2R\left|\sin{\!\left(\!\frac{\theta\!-\!\eta}{2}\right)}\!\right|\right)}\\
&\quad+\!\frac12\log{\!\big(1\!+\!\frac{2}{R^2}\Delta_\eta \bm{X}_c(\theta)\!\cdot\!\Delta_\eta \bm{X}(\theta)\!+\!\frac{1}{R^2}|\Delta_\eta \bm{X}(\theta)|^2 \big)}
\\
&\hspace{-2.3cm}=\log{\left(\!2R\left|\sin\!{\left(\frac{\theta\!-\!\eta}{2}\right)}\right|\right)}\!+\frac{1}{R^2}\Delta_\eta \bm{X}_c(\theta)\!\cdot\!\Delta_\eta \bm{X}(\theta)\!+\!\mathcal{R}_1(\Delta_\eta \bm{X}(\theta)),
 \end{aligned}
\end{equation}
where
\begin{equation}\label{R1}
\mathcal{R}_1(\Delta_\eta \bm{X}(\theta))
\!=\!\!\sum_{n=1}^\infty\!\!\!\sum_{\substack{m=0 \\ n+m\geq 2}}^n\!\!\!\!\!\begin{pmatrix}
n\\m
\end{pmatrix} \!\!\frac{(-1)^{n-1}}{2n R^{2n}}(2\Delta_\eta \bm{X}_c(\theta)\cdot \Delta_\eta \bm{X}(\theta))^{n-m}|\Delta_\eta \bm{X}(\theta)|^{2m}.
\end{equation}
We expand the denominator in the second term of \eqref{G} as
\begin{multline}\label{expand.d}
\frac{1}{|\Delta \bm{X}+\Delta \bm{X}_c|^2} = \frac1{4R^2\sin^2{\left(\frac{\theta-\eta}{2}\right)}}\Big(1\!-\!\frac2{R^2}\Delta_\eta \bm{X}_c(\theta)\cdot \Delta_\eta \bm{X}(\theta)\Big)
\\
+
\frac1{4R^2\sin^2{\left(\frac{\theta-\eta}{2}\right)}}\mathcal{R}_2(\Delta_\eta \bm{X}(\theta)),
\end{multline}
with the notation
\begin{equation}\label{R2}
\mathcal{R}_2(\Delta_\eta \bm{X}(\theta))
=
\!\sum_{n=1}^\infty\!\!\sum_{\substack{m=0 \\ n+m\geq 2}}^n\!\!\!\begin{pmatrix}
n\\m
\end{pmatrix}\! \frac{(-1)^{n}}{R^{2n}}(2\Delta_\eta \bm{X}_c(\theta)\cdot \Delta_\eta \bm{X}(\theta))^{n-m}|\Delta_\eta \bm{X}(\theta)|^{2m}.
\end{equation}
Therefore, we can write 
\begin{equation}\label{G2exp}
\begin{aligned}
\frac{(\Delta \bm{X}+\Delta \bm{X}_c)\otimes(\Delta \bm{X}+\Delta \bm{X}_c)}{|\Delta \bm{X}+\Delta \bm{X}_c|^2}=A_0+A_L+A_N,
\end{aligned}
\end{equation}
with
\begin{equation*}
\begin{aligned}
A_0&=\frac{1}{R^2}\Delta_\eta \bm{X}_c(\theta)\otimes \Delta_\eta \bm{X}_c(\theta),\\
A_L&=-\frac2{R^4}\Delta_\eta \bm{X}_c(\theta)\cdot \Delta_\eta \bm{X}(\theta)\Delta_\eta \bm{X}_c(\theta)\otimes \Delta_\eta \bm{X}_c(\theta)
+\frac1{R^2}\Delta_\eta \bm{X}_c(\theta)\otimes\Delta_\eta \bm{X}(\theta)\\
&\quad+\frac1{R^2}\Delta_\eta \bm{X}(\theta)\otimes\Delta_\eta \bm{X}_c(\theta),
\end{aligned}
\end{equation*}
and the nonlinear term is given by
\begin{equation*}
\begin{aligned}
A_N&=\frac1{R^2}\Delta_\eta \bm{X}(\theta)\otimes\Delta_\eta \bm{X}(\theta)
\left(1-\frac2{R^2}\Delta_\eta \bm{X}_c(\theta)\cdot \Delta_\eta \bm{X}(\theta)+\mathcal{R}_2(\Delta_\eta \bm{X}(\theta))\right)
\\
&\quad+\frac1{R^2}\Delta_\eta \bm{X}_c(\theta)\!\otimes\!\Delta_\eta \bm{X}(\theta)
\!\left(\!\!-\!\frac{2}{R^2}\Delta_\eta \bm{X}_c(\theta)\!\cdot\! \Delta_\eta \bm{X}(\theta)\!+\!\mathcal{R}_2(\Delta_\eta \bm{X}(\theta)) \right)
\\
&\quad+\frac1{R^2}\Delta_\eta \bm{X}(\theta)\!\otimes\!\Delta_\eta \bm{X}_c(\theta)\!
\left(\!\!-\!\frac{2}{R^2}\Delta_\eta \bm{X}_c(\theta)\!\cdot\! \Delta_\eta \bm{X}(\theta)\!+\!\mathcal{R}_2(\Delta_\eta \bm{X}(\theta)) \right)
\\
&\quad+\frac1{R^2}\Delta_\eta \bm{X}_c(\theta)\otimes \Delta_\eta \bm{X}_c(\theta) \mathcal{R}_2(\Delta_\eta \bm{X}(\theta)).
\end{aligned}
\end{equation*}
Joining the expansions \eqref{G1exp} and \eqref{G2exp}, we  split $G(\Delta \bm{\mathcal{X}})$ in \eqref{G} into zero order, linear, and nonlinear parts in terms of $\bm{X}$ as follows
\begin{multline}\label{Gsplit}
G(\Delta \bm{\mathcal{X}})
\\
=G_0(\Delta_\eta\bm{X}_c(\theta))+G_L(\Delta_\eta\bm{X}_c(\theta),\Delta_\eta\bm{X}(\theta))+G_N(\Delta_\eta\bm{X}_c(\theta),\Delta_\eta\bm{X}(\theta)),
\end{multline}
where
\begin{multline}\label{G0}
G_0(\Delta_\eta\bm{X}_c(\theta))
\\
=\frac{1}{4\pi}\left(-\log{\left(2R\left|\sin{\left(\frac{\theta-\eta}{2}\right)}\right|\right)}I +\frac1{R^2}\Delta_\eta \bm{X}_c(\theta)\otimes\Delta_\eta \bm{X}_c(\theta)\right),
\end{multline}
\begin{equation}\label{G1}
\begin{aligned}
G_L(\Delta_\eta\bm{X}_c(\theta),\Delta_\eta\bm{X}(\theta))&=\frac{1}{4\pi R^2}\Big(\!-\!\Delta_\eta \bm{X}_c(\theta)\!\cdot\!\Delta_\eta \bm{X}(\theta)I\!\\
&\quad-\!\frac2{R^2}\Delta_\eta \bm{X}_c(\theta)\!\cdot\! \Delta_\eta \bm{X}(\theta)\Delta_\eta \bm{X}_c(\theta)\!\otimes\! \Delta_\eta \bm{X}_c(\theta)\\
&\quad+\Delta_\eta \bm{X}_c(\theta)\otimes\Delta_\eta \bm{X}(\theta)+\Delta_\eta \bm{X}(\theta)\otimes\Delta_\eta \bm{X}_c(\theta) \Big),
\end{aligned}
\end{equation}
\begin{multline}\label{GN}
G_N(\Delta_\eta\bm{X}_c(\theta),\Delta_\eta\bm{X}(\theta))=\frac{-1}{4\pi}\mathcal{R}_1(\Delta_\eta \bm{X}(\theta))I\\+\frac1{4\pi R^2}\Delta_\eta \bm{X}(\theta)\otimes\Delta_\eta \bm{X}(\theta)\Big(1-\frac2{R^2}\Delta_\eta \bm{X}_c(\theta)\cdot \Delta_\eta \bm{X}(\theta)+\mathcal{R}_2(\Delta_\eta \bm{X}(\theta))\Big)\\
+\frac1{4\pi R^2}\Big(\Delta_\eta \bm{X}_c(\theta)\otimes\Delta_\eta \bm{X}(\theta)+\Delta_\eta \bm{X}(\theta)\otimes\Delta_\eta \bm{X}_c(\theta)\Big)
\\
\quad \times \big(-\frac2{R^2}\Delta_\eta \bm{X}_c(\theta)\cdot \Delta_\eta \bm{X}(\theta)+\mathcal{R}_2(\Delta_\eta \bm{X}(\theta)) \big)
\\
+\frac{1}{4\pi R^2}\Delta_\eta \bm{X}_c(\theta)\otimes \Delta_\eta \bm{X}_c(\theta) \mathcal{R}_2(\Delta_\eta \bm{X}(\theta)).
\end{multline}
Consider the splitting of the solution $\mb{F}(\theta)$ to \eqref{viscosityjump} into zero order, linear, and nonlinear parts as
\begin{equation}\label{Fsplit}
\bm{F}(\theta)=\bm{F}_0(\theta)+\bm{F}_L(\theta)+\bm{F}_N(\theta).
\end{equation}
(We will prove bounds for these terms in Section \ref{secF}.) 
Introducing the splittings \eqref{Gsplit} and \eqref{Fsplit} in \eqref{Xteqn}, we obtain that
\begin{equation}\label{nonlinearexpansion}
\begin{aligned}
\bm{\mathcal{X}}_t(\theta)&=\bm{\mathcal{O}}(\bm{X}_c)(\theta)+\bm{\mathcal{L}}(\bm{X}_c,\bm{X})(\theta)+\bm{\mathcal{N}}(\bm{X}_c,\bm{X})(\theta),
\end{aligned}
\end{equation}
where we recall that 
$\bm{\mathcal{X}}(\theta)=\bm{X}(\theta)+\bm{X}_c(\theta)$ and we use the notation
\begin{equation*}
\begin{aligned}
\bm{\mathcal{O}}(\bm{X}_c)(\theta)=\int_{\mathbb{S}} G_0(\Delta_\eta\bm{X}_c(\theta))\bm{F}_0(\eta)d\eta,
\end{aligned}
\end{equation*}
\begin{equation*}
\begin{aligned}
\!\bm{\mathcal{L}}(\bm{X}_c,\bm{X})(\theta)=\int_{\mathbb{S}} G_0(\Delta_\eta\bm{X}_c(\theta))\bm{F}_L(\eta)d\eta+\int_{\mathbb{S}} G_L(\Delta_\eta\bm{X}_c(\theta),\Delta_\eta\bm{X}(\theta))\bm{F}_0(\eta)d\eta,
\end{aligned}
\end{equation*}
\begin{equation*}
\begin{aligned}
\!\bm{\mathcal{N}}(\bm{X}_c,\bm{X})(\theta)\!&=\!\!\!\int_{\mathbb{S}}\!\! \Big(G_L(\Delta_\eta\bm{X_c}(\theta),\Delta_\eta\bm{X}(\theta))\bm{F}_L(\eta)\!+\!G_N(\Delta_\eta\bm{X}_c(\theta),\Delta_\eta\bm{X}(\theta))\bm{F}_0(\eta)\\
&\quad+\!G_N(\Delta_\eta\bm{X}_c(\theta),\Delta_\eta\bm{X}(\theta))\bm{F}_L(\eta)\!+\!G(\bm{\mathcal{X}}(\theta)\!-\!\bm{\mathcal{X}}(\eta))\bm{F}_N(\eta)\Big)d\eta.
\end{aligned}
\end{equation*}
We have thus expanded the evolution equation \eqref{Xteqn} distinguishing the zero order, linear, and nonlinear in $\bm{X}$ contributions.

\subsection{Linearized System}\label{secLinearization}

We proceed to show that the linearized system gives rise to a diffusion operator on $\bm{X}$. Since the linear structure is the same for any uniformly parametrized circle (see \cite{MR3935476}), we will use now \eqref{unifcircles} with $a=1$, $b=c=d=0$ and $R=1$ to simplify the computations, and for clarity of notation we will denote this circle by $\bm{X}_\star$.

We will now linearize equations \eqref{Xteqn} and \eqref{viscosityjump}. 
We first determine $\bm{F}_0$,  the value of $\bm{F}$ at the steady state:
\begin{align}
\label{Fstar1}
&0=\bm{\mathcal{O}}(\bm{X}_\star)(\theta)=\int_{\mathbb{S}} G_0(\Delta_\eta\bm{X}_\star(\theta))\bm{F}_0(\eta)d\eta,\\
\label{Fstar2}
&\bm{F}_0(\theta)-2A_\mu\bm{\mathcal{S}_0}(\bm{F}_0,\bm{X}_\star)(\theta)=2A_e \bm{\tilde{F}}_{\text{el},0}(\theta),
\end{align}
where
\begin{equation*}
\bm{\tilde{F}}_{\text{el},0}(\theta)=\p_\theta^2\bm{X}_\star(\theta),
\end{equation*}
and
\begin{equation*}
\bm{\mathcal{S}_0}(\bm{F}_0,\bm{X}_\star)(\theta)\!=\!\frac{1}{\pi }\int_\mathbb{S} \p_\theta\bm{X}_\star^\perp\cdot\Delta_\eta \bm{X}_\star(\theta)\Delta_\eta \bm{X}_\star(\theta)\Delta_\eta \bm{X}_\star(\theta)\cdot\frac{\bm{F}_0(\eta)}{2\sin{\Big(\frac{\theta-\eta}{2}\Big)}}d\eta.
\end{equation*}
Rewriting $\Delta_\eta \bm{X}_\star(\theta)\Delta_\eta \bm{X}_\star(\theta)\cdot\bm{F}_0(\eta)=\Delta_\eta \bm{X}_\star(\theta)\otimes\Delta_\eta \bm{X}_\star(\theta)\bm{F}_0(\eta)$,
and recalling the computations \eqref{comput1} and \eqref{comput3}, one finds that
\begin{equation*}
\bm{\mathcal{S}_0}(\bm{F}_0,\bm{X}_\star)(\theta)\!=\!\frac{-1}{2\pi }\int_\mathbb{S}M(\theta, \eta)\bm{F}_0(\eta)d\eta,
\end{equation*}
where from \eqref{comput3} we have
\begin{equation}\label{matrixM}
M(\theta,\eta)\!=\!\Delta_\eta \bm{X}_\star(\theta)\!\otimes\! \Delta_\eta \bm{X}_\star(\theta)\!=\!\frac{1}2\begin{bmatrix}
1\!-\!\cos{(\theta\!+\!\eta)}&-\sin{(\theta\!+\!\eta)}\\
-\sin{(\theta\!+\!\eta)}&1\!+\!\cos{(\theta\!+\!\eta)}
\end{bmatrix},
\end{equation}
and therefore $\bm{F}_0$ is defined by
\begin{equation*}
\bm{F}_0(\theta)+\frac{A_\mu}{\pi}\int_\mathbb{S}M(\theta, \eta)\bm{F}_0(\eta)d\eta=2A_e \p_\theta^2\bm{X}_\star(\theta).
\end{equation*}
Since $\p_\theta^2\bm{X}_\star=-\bm{X}_\star$ and noting that 
\begin{equation*}
\frac{1}{\pi}\int_\mathbb{S} M(\theta,\eta)\bm{X}_\star(\eta)d\eta=-\bm{X}_\star(\theta),
\end{equation*}
it is easily seen that 
\begin{equation}\label{F0star}
\bm{F}_0(\theta)=\frac{2A_e}{1-A_\mu}\p_\theta^2 \bm{X}_\star(\theta).
\end{equation}
Now, recalling \eqref{G0}, it can be checked that \eqref{F0star} satisfies in fact \eqref{Fstar1}: 
\begin{multline*}
4\pi\left(\frac{1-A_\mu}{2A_e} \right)
\int_{\mathbb{S}} G_0(\Delta_\eta\bm{X}_\star(\theta))\bm{F}_0(\eta)d\eta
\\
=
-\int_\mathbb{S}\log{\left(2\left|\sin{\left(\frac{\theta-\eta}{2}\right)}\right|\right)}\p_\eta^2\bm{X}_\star(\eta)d\eta 
+
\int_\mathbb{S}M(\theta,\eta)\p_\eta^2\bm{X}_\star(\eta)d\eta,
\end{multline*}
so integration by parts in the first term yields \eqref{Fstar1}.

We now proceed to compute the linear term $\bm{\mathcal{L}}(\bm{X}_\star,\bm{X})(\theta)$ in \eqref{nonlinearexpansion}. For convenience, we write it as follows
\begin{equation}\label{XtUint}
\bm{\mathcal{L}}(\bm{X}_\star,\bm{X})(\theta)\!=\!\int_{\mathbb{S}}\!\! G_0(\Delta_\eta \bm{X}_\star)\bm{F}_L(\eta)d\eta
+\!\int_{\mathbb{S}}(\nabla G(\Delta \bm{X}_\star)\bm{F}_0(\eta))\Delta \bm{X}d\eta,
\end{equation}
where $G_0$ and $G$ are defined in \eqref{G0} and \eqref{G}, respectively.
To simplify the second integral above, note that:
\begin{equation*}
\begin{split}
\PD{G_{ij}}{x_1}(\Delta \bm{X}_\star)e_{{\rm r},j}(\eta)
&=\PD{G_{i1}}{x_1}(\Delta \bm{X}_\star)e_{\rm r,1}(\eta)+\PD{G_{i2}}{x_1}(\Delta \bm{X}_\star)e_{\rm r,2}(\eta)\\
&=\PD{G_{i1}}{x_1}(\Delta \bm{X}_\star)\p_\eta X_{\star,2}-\PD{G_{i2}}{x_1}(\Delta \bm{X}_\star)\p_\eta X_{\star, 1}\\
&=-\PD{G_{i2}}{x_2}(\Delta \bm{X}_\star)\p_\eta X_{\star,2}-\PD{G_{i2}}{x_1}(\Delta \bm{X}_\star)\p_\eta X_{\star, 1}\\
&=\p_\eta G_{i2}(\Delta \bm{X}_\star).
\end{split}
\end{equation*}
Here $e_{{\rm r},j}$ is the $j$-th component of the vector $e_{{\rm r}}$ etc. Further, in the third equality above, we used the fact that the Stokeslet is divergence free:
\begin{equation*}
\PD{G_{i1}}{x_1}+\PD{G_{i2}}{x_2}=0.
\end{equation*}
Likewise, we have:
\begin{equation*}
\PD{G_{ij}}{x_2}(\Delta \bm{X}_\star)e_{{\rm r},j}(\eta)=-\p_\eta G_{i1}(\Delta \bm{X}_\star).
\end{equation*}
We thus have:
\begin{equation*}
\begin{split}
\int_{\mathbb{S}}(\nabla G(\Delta \bm{X}_\star)\bm{F}_0(\eta))\Delta \bm{X}d\eta&=-\frac{2A_e}{1\!-\!A_\mu}\!\int_{\mathbb{S}}(\mc{R}^{-1}\p_\eta G(\bm{X}_{\star}(\theta)\!-\!\bm{X}_{\star}(\eta)))\Delta \bm{X}d\eta\\
&=\frac{2A_e}{1\!-\!A_\mu}\int_{\mathbb{S}}(\mc{R}^{-1}G(\bm{X}_{\star}(\theta)\!-\!\bm{X}_{\star}(\eta)))\p_\eta \bm{X}(\eta)d\eta\\
&=\frac{2A_e}{1\!-\!A_\mu}\!\int_{\mathbb{S}}G(\bm{X}_{\star}(\theta)\!-\!\bm{X}_{\star}(\eta))(\mc{R}^{-1}\p_\eta \bm{X}(\eta))d\eta.
\end{split}
\end{equation*}
Since $G(\bm{X}_{\star}(\theta)-\bm{X}_{\star}(\eta))\eqdef G_0(\Delta_\eta\bm{X}_\star)$, then \eqref{XtUint} simplifies to:
\begin{equation}\label{Uint}
\bm{\mathcal{L}}(\bm{X}_\star,\bm{X})(\theta)=\int_{\mathbb{S}} G_0(\Delta_\eta \bm{X}_\star)\paren{\bm{F}_L(\eta)+\frac{2A_e}{1\!-\!A_\mu}\mc{R}^{-1}\p_\eta \bm{X}}d\eta.
\end{equation}
This is our specification of the linearized operator.

We will now determine $\bm{F}_L$ as in \eqref{Fsplit}, that is, the linear part of $\bm{F}$ in \eqref{viscosityjump}. We find
\begin{equation}\label{Flintemp}
\begin{split}
&\bm{F}_L(\theta)+\frac{A_\mu}{\pi}\int_\mathbb{S}M(\theta, \eta)\bm{F}_L(\eta)d\eta=2A_e\p_\theta^2\bm{X} -2A_\mu(\bm{Q}+\bm{S}),\\
&Q_i=-\int_{\mathbb{S}} \mathcal{T}_{ijk}(\Delta \bm{X}_\star)F_{0,k}(\eta) \mc{R}^{-1}_{jl}\p_\theta X_l(\theta) d\eta,\\
&S_i=-\int_{\mathbb{S}} \PD{\mathcal{T}_{ijk}}{x_m}(\Delta \bm{X}_\star) \Delta \bm{X}_m F_{0,k}(\eta) \mc{R}^{-1}_{jl}\p_\theta X_{\star,l}(\theta)d\eta.
\end{split}
\end{equation}
Let us compute $\bm{Q}$.  We start with
\begin{equation*}
-\mathcal{T}_{ijk}(\Delta \bm{X}_\star)F_{0, k}=-\frac{2A_e}{1-A_\mu}\frac{\Delta X_{\star, i}\Delta X_{\star, j}\Delta X_{\star, k}e_{{\rm r},k}(\eta)}{\pi\abs{\Delta \bm{X}_\star}^4}
=\frac{2A_e}{1-A_\mu}\frac{\Delta X_{\star, i}\Delta X_{\star, j}}{2\pi\abs{\Delta \bm{X}_\star}^2},
\end{equation*}
where we used
\begin{equation}\label{DXdote2}
\frac{\Delta \bm{X}_\star\cdot \bm{e}_{\rm r}(\eta)}{\abs{\Delta \bm{X}_\star}^2}=-\frac{1}{2}.
\end{equation}
Therefore, we have
\begin{equation*}
\begin{split}
\bm{Q}&=\frac{A_e}{1-A_\mu}\frac{1}{\pi }\int_{\mathbb{S}}\frac{\Delta \bm{X}_{\star}\otimes\Delta \bm{X}_{\star}}{\abs{\Delta \bm{X}_\star}^2}d\eta\mc{R}^{-1}\p_\theta\bm{X}(\theta)
\\
&=\frac{A_e}{1-A_\mu}\frac{1}{\pi }\int_{\mathbb{S}}M(\theta,\eta)d\eta\mc{R}^{-1}\p_\theta\bm{X}(\theta)=\frac{A_e}{1-A_\mu}\mc{R}^{-1}\p_\theta\bm{X}(\theta),
\end{split}
\end{equation*}
where we used \eqref{matrixM} in the second equality. 
We next compute $\bm{S}$,
\begin{equation*}
\begin{split}
&-\PD{\mathcal{T}_{ijk}}{x_m}(\Delta X_\star)\Delta X_mF_{0,k}(\eta) \mc{R}^{-1}_{jl}\p_\theta X_{\star,l}(\theta)\\
=&-\frac{1}{\pi}\frac{2A_e}{1-A_\mu}\paren{\frac{\Delta X_i\Delta X_{\star, j}\Delta X_{\star, k}e_{{\rm r},j}(\theta)e_{{\rm r},k}(\eta)}{\abs{\Delta \bm{X}_\star}^4}}\\
&\frac{-2A_e/\pi}{1-A_\mu}\!\!\paren{\!\frac{\Delta X_{\star, i}\Delta X_j\Delta X_{\star, k}e_{{\rm r},j}(\theta)e_{{\rm r},k}(\eta)}{\abs{\Delta \bm{X}_\star}^4}
	\!+\!\frac{\Delta X_{\star, i}\Delta X_{\star, j}\Delta X_k e_{{\rm r},j}(\theta)e_{{\rm r},k}(\eta)}{\abs{\Delta \bm{X}_\star}^4}}\\
&+\frac{8A_e/\pi}{1-A_\mu}\paren{\frac{\Delta X_{\star, i}\Delta X_{\star, j}\Delta X_{\star, k}e_{{\rm r},j}(\theta)e_{{\rm r},k}(\eta)\Delta X_{\star, m}\Delta X_m}{\abs{\Delta \bm{X}_\star}^6}}.\\
\end{split}
\end{equation*}
We simplify each term as follows
\begin{equation*}
\begin{split}
&\frac{\Delta X_i\Delta X_{\star, j}\Delta X_{\star, k}e_{{\rm r},j}(\theta)e_{{\rm r},k}(\eta)}{\abs{\Delta \bm{X}_\star}^4}=-\frac{1}{4}\Delta X_i, \\
&\frac{\Delta X_{\star, i}\Delta X_j\Delta X_{\star, k}e_{{\rm r},j}(\theta)e_{{\rm r},k}(\eta)}{\abs{\Delta \bm{X}_\star}^4}
+\frac{\Delta X_{\star, i}\Delta X_{\star, j}\Delta X_k e_{{\rm r},j}(\theta)e_{{\rm r},k}(\eta)}{\abs{\Delta \bm{X}_\star}^4}\\
&=\frac{-\Delta X_{\star, i}\Delta X_je_{{\rm r},j}(\theta)}{2\abs{\Delta \bm{X}_\star}^2}
+\frac{\Delta X_{\star, i}\Delta X_k e_{{\rm r},k}(\eta)}{2\abs{\Delta \bm{X}_\star}^2}=-\frac{\Delta X_{\star, i}\Delta X_{\star, j} \Delta X_j}{2\abs{\Delta \bm{X}_\star}^2},\\
&\frac{\Delta X_{\star, i}\Delta X_{\star, j}\Delta X_{\star, k}e_{{\rm r},j}(\theta)e_{{\rm r},k}(\eta)\Delta X_{\star, m}\Delta X_m}{\abs{\Delta \bm{X}_\star}^6}
=-\frac{\Delta X_{\star, i}\Delta X_{\star, j} \Delta X_j}{4\abs{\Delta \bm{X}_\star}^2},
\end{split}
\end{equation*}
where above we made repeated use of \eqref{DXdote2} and
\begin{equation*}
\frac{\Delta \bm{X}_\star\cdot \mathcal{R}^{-1}\p_\theta \bm{X}_\star(\theta)}{\abs{\Delta \bm{X}_\star}^2}=
\frac{\Delta \bm{X}_\star\cdot e_{{\rm r}}(\theta)}{\abs{\Delta \bm{X}_\star}^2}=\frac{1}{2}.
\end{equation*}
Thus we have
\begin{equation*}
-\PD{\mathcal{T}_{ijk}}{x_m}(\Delta X_\star)\Delta X_mF_{0,k}(\eta) \mc{R}^{-1}_{jl}\p_\theta X_{\star,l}=\frac{A_e/2\pi}{1-A_\mu}\paren{\Delta X_i-\frac{2\Delta X_{\star, i}\Delta X_{\star, j} \Delta X_j}{\abs{\Delta \bm{X}_\star}^2}}.
\end{equation*}
Substituting this back into the expression for $\bm{S}$ in \eqref{Flintemp}, we have:
\begin{equation}\notag
\begin{split}
\bm{S}&=\frac{A_e/2\pi}{1-A_\mu}\int_{\mathbb{S}} (I-2M(\theta,\eta))\Delta \bm{X}d\eta=-\frac{A_e/2\pi}{1-A_\mu}\int_{\mathbb{S}} (I-2M(\theta,\eta))\bm{X}(\eta)d\eta\\
&=\frac{A_e/2\pi}{1-A_\mu} \paren{-\dual{\bm{e}_{\rm r}}{\bm{X}}\bm{e}_{\rm r}+\dual{\bm{e}_{\rm t}}{\bm{X}}\bm{e}_{\rm t}},
\end{split}
\end{equation}
where we used \eqref{matrixM} in the first equality and we are using the notation \eqref{innerProductNotation} for the inner product.

Equation \eqref{Flintemp} thus reduces to:
\begin{equation*}
\begin{split}
&\bm{F}_L(\theta)+\frac{A_\mu}{\pi}\int_{\mathbb{S}} M(\theta,\eta)\bm{F}_L(\eta)d\eta\\
=&2A_e\p_\theta^2\bm{X}(\theta) -\frac{2A_e A_\mu}{1-A_\mu}\paren{\mc{R}^{-1}\p_\theta \bm{X}(\theta)+\frac{1}{2\pi}\paren{-\dual{\bm{e}_{\rm r}}{\bm{X}}\bm{e}_{\rm r}(\theta)+\dual{\bm{e}_{\rm t}}{\bm{X}}\bm{e}_{\rm t}(\theta)}}.
\end{split}
\end{equation*}
We must solve the above equation for $\bm{F}_L$ in terms of $\bm{X}$. Suppose $\dual{\bm{e}_{\rm r}}{\bm{X}}=\dual{\bm{e}_{\rm t}}{\bm{X}}=0$.
Then, it is easily checked that:
\begin{equation}\label{F1}
\bm{F}_L(\theta)=2A_e\p_\theta^2\bm{X}(\theta) -\frac{2A_eA_\mu}{1-A_\mu}\mc{R}^{-1}\p_\theta \bm{X}(\theta).
\end{equation}
We may further compute $\bm{F}_L$ when $\bm{X}$ is either $\bm{e}_{\rm r}$ or $\bm{e}_{\rm t}$. Noting that
\begin{equation}\notag
M(\theta,\eta)\bm{e}_{\rm r}(\eta)=\frac{1}{2}(\bm{e}_{\rm r}(\eta)-\bm{e}_{\rm r}(\theta)), \; M(\theta,\eta)\bm{e}_{\rm t}(\eta)=\frac{1}{2}(\bm{e}_{\rm t}(\eta)+\bm{e}_{\rm t}(\theta)), 
\end{equation}
we see by an easy calculation that
\begin{equation}\notag
\text{If } \bm{X}=\bm{e}_{\rm r,t}, \text{ then } \bm{F}_L=-\frac{2A_e}{1-A_\mu}\bm{e}_{\rm r,t}.
\end{equation}
Note that
\begin{equation}\notag
2A_e\p_\theta^2\bm{e}_{\rm r,t} -\frac{2A_eA_\mu}{1-A_\mu}\mc{R}^{-1}\p_\theta \bm{e}_{\rm r,t}=-\frac{2A_e}{1-A_\mu}\bm{e}_{\rm r,t}.
\end{equation}
This shows that the expression for $\bm{F}_L$ in \eqref{F1} is in fact valid without the restriction $\dual{\bm{e}_{\rm r}}{\bm{X}}=\dual{\bm{e}_{\rm t}}{\bm{X}}=0$.
Substituting \eqref{F1} into \eqref{Uint} yields:
\begin{equation}\label{Ffinal}
\bm{\mathcal{L}}(\bm{X}_\star,\bm{X})(\theta)=2A_e\int_{\mathbb{S}} G_0(\Delta_\eta \bm{X}_\star)\paren{\p_{\eta}^2\bm{X}(\eta)+\mc{R}^{-1}\p_\eta \bm{X}(\eta)}d\eta.
\end{equation}
%To remove the restriction $\dual{\bm{e}_{\rm r}}{\bm{X}}=\dual{\bm{e}_{\rm t}}{\bm{X}}=0$, consider the cases in which $\bm{X}$ is either proportional to $\bm{e}_{\rm r}$ or $\bm{e}_{\rm t}$.
%If $\bm{X}=c\bm{e}_{r}$ for some constant $c$, it is easily seen that $\bm{F}_L=\frac{2A_\mu A_e}{(1-A_\mu)^2}c\bm{e}_{\rm r}$. 
%But  then this term vanishes in $\bm{\mathcal{L}}$ as we have shown already in \eqref{Fstar1}. When $\bm{X}=c\bm{e}_{\rm t}$ for some constant $c$, 
%then, we may check that:
%\begin{equation*}
%\bm{F}_L=-\frac{2A_\mu A_e}{(1-A_\mu)(1+A_\mu)}c\bm{e}_{\rm t}.
%\end{equation*}
%It is now easily seen that \eqref{Uint} will vanish again in this case.
%Expression \eqref{Ffinal} itself annihilates the vector space spanned by $\bm{e}_{\rm r}$ and $\bm{e}_{\rm t}$. Therefore, equation \eqref{Ffinal} is the actual linearization, while $\bm{F}_L$ is given by
%\begin{equation}\label{F1}
%\begin{aligned}
%\bm{F}_L(\theta)&=2A_e\p_\theta^2\bm{X} %-\frac{2A_eA_\mu}{1-A_\mu}\mc{R}^{-1}\p_\theta \bm{X}
%+\frac{4A_eA_\mu}{1-A_\mu^2}\frac{1}{2\pi}\dual{\bm{e}_{\rm %r}}{\bm{X}}\bm{e}_{\rm r}.
%+\frac{2A_\mu A_e}{(1-A_\mu)^2}\frac{1}{2\pi}\dual{\bm{e}_{\rm r}}{\bm{X}}\bm{e}_{\rm r}\\
%&\quad-\frac{2A_\mu A_e}{(1-A_\mu)(1+A_\mu)}\frac{1}{2\pi}\dual{\bm{e}_{\rm t}}{\bm{X}}\bm{e}_{\rm t}.
%\end{aligned}
%\end{equation}
Finally, since
\begin{equation*}
G_0(\Delta_\eta \bm{X}_\star)=-\frac{1}{4\pi}\Big(\log{\left|2\sin{\left(\frac{\theta-\eta}{2}\right)}\right|}\Big)I+M(\theta,\eta),
\end{equation*}
and
\begin{equation*}
\begin{aligned}
&\int_\mathbb{S}M(\theta,\eta)\Big(\partial_\eta^2\bm{X}(\eta)+\mathcal{R}^{-1}\partial_\eta\bm{X}(\eta)\Big)d\eta\\
&=\int_\mathbb{S}\Big(\partial^2_\eta M(\theta,\eta)-\p_{\eta}M(\theta,\eta)\mathcal{R}^{-1}\Big)\bm{X}(\eta)d\eta=0,
\end{aligned}
\end{equation*}
we have that
\begin{equation*}
\begin{aligned}
\bm{\mathcal{L}}(\bm{X}_\star,\bm{X})(\theta)&=-\frac{A_e}{2\pi}\int_{\mathbb{S}} \log{\left|2\sin{\left(\frac{\theta-\eta}{2}\right)}\right|}\paren{\p_{\eta}^2\bm{X}(\eta)+\mc{R}^{-1}\p_\eta \bm{X}(\eta)}d\eta\\
&=-\frac{A_e}{2\pi}\int_{\mathbb{S}} 
\frac{\p_{\eta}\bm{X}(\eta)+\mc{R}^{-1} \bm{X}(\eta)}{2\tan{\big(\frac{\theta-\eta}{2}\big)}}d\eta,
\end{aligned}
\end{equation*}
which is given by a Hilbert transform
\begin{equation}\label{linearization}
\bm{\mathcal{L}}(\bm{X}_\star,\bm{X})(\theta)=-\frac{A_e}2\mathcal{H}\big(\p_{\eta}\bm{X}(\eta)+\mc{R}^{-1} \bm{X}(\eta)\big)(\theta).
\end{equation}
Therefore, the system \eqref{nonlinearexpansion} can be written as follows
\begin{equation}\label{finalsystem}
\bm{\mathcal{X}}_t(\theta)=-\frac{A_e}2\big(\Lambda \bm{X}(\theta)+\mathcal{H}\mc{R}^{-1} \bm{X}(\theta)\big)+\bm{\mathcal{N}}(\bm{X}_c,\bm{X})(\theta).
\end{equation}
Notice that $\bm{X}_c$ is a uniformly parametrized circle with time-dependent radius $R(t)$, as opposed to the $\bm{X_*}$ used in this subsection to obtain the linearization.
We will use the system \eqref{finalsystem} to study the global in time dynamics of the Peskin problem in the rest of this paper.

\subsection{Evolution of the $\foneonenu$ Norm of $\bm{X}$}\label{evolF11}

We first notice that, because $\bm{X}(\theta)$ is real-valued, it must hold that $\widehat{\bm{X}}(-k)=\overline{\widehat{\bm{X}}(k)}$. Therefore, the norm \eqref{fsneonenu} can be written in terms of positive frequencies alone
\begin{equation*}
\xoneonenu=2\sum_{k\geq1}e^{\nu(t)k}k|\widehat{\bm{X}}(k)|.
\end{equation*}
The  system \eqref{finalsystem}  in Fourier variables reads for $k\ge 0$ as follows
\begin{equation}\label{evolutionFourier}
\widehat{\bm{\mathcal{X}}}_t(k)=-\frac{A_e}2L(k)\widehat{\bm{X}}(k)+\mathcal{F}\big(\bm{\mathcal{N}}(\bm{X}_c,\bm{X})\big)(k).
\end{equation}
Here we recall that $\bm{\mathcal{X}}=\bm{X}+\bm{X}_c$.  Further the diffusion matrix is given by
\begin{equation*}
L(k)=\begin{bmatrix}
k&-i\hspace{0.05cm}{\sign}(k)\\
i\hspace{0.05cm}{\sign}(k)&k
\end{bmatrix},\quad k\geq 1, 
\quad
L(0)=\begin{bmatrix}
0&0\\
0&0
\end{bmatrix}.
\end{equation*}
The diagonalization of this matrix for $k\ge 1$  shows that
\begin{equation*}
L(k)=P(k)\mathcal{D}(k)P(k)^{-1},
\end{equation*}
where for $k\ge 1$ we have
\begin{equation*}
P(k)=\frac1{\sqrt{2}}\begin{bmatrix}
-i\hspace{0.05cm}{\sign}(k)&1\\1&-i\hspace{0.05cm}{\sign}(k)
\end{bmatrix},\quad P(k)^{-1}=\overline{P(k)},
\quad
\mathcal{D}(k)=\begin{bmatrix}
	k+1&0\\0&k-1
	\end{bmatrix}.
\end{equation*}
% \begin{equation*}
% \mathcal{D}(k)=\begin{bmatrix}
% 	k+1&0\\0&k-1
% 	\end{bmatrix}.
% \end{equation*}
And when $k=0$ we define
\begin{equation*}
P(0)=\frac1{\sqrt{2}}\begin{bmatrix}
0&1\\1&0
\end{bmatrix},\quad P(0)^{-1}=2\overline{P(0)}, 
\quad
\mathcal{D}(0)=\begin{bmatrix}
	0&0\\0&0
	\end{bmatrix}.
\end{equation*}
This leads us to define the following change of variables
\begin{equation}\label{Y}
\widehat{\bm{Y}}(k)\eqdef P(k)^{-1}\widehat{\bm{X}}(k),\quad \widehat{\bm{Y}}_c(k) \eqdef P(k)^{-1}\widehat{\bm{X}}_c(k),
\end{equation}
with $\bm{\mathcal{Y}}\eqdef \bm{Y}+\bm{Y_c}$.  
The system \eqref{evolutionFourier} for $k\ge 0$ then becomes 
\begin{equation}\label{evolutionFourierY}
\widehat{\bm{\mathcal{Y}}}_t(k)=-\frac{A_e}2\mathcal{D}(k)\widehat{\bm{Y}}(k)+P(k)^{-1}\mathcal{F}\big(\bm{\mathcal{N}}(\bm{X}_c,\bm{X})\big)(k).
\end{equation}
The relationship between $\bm{X}$ and $\bm{Y}$ in space variables is given by the Hilbert transform \eqref{defHilbertTransform}, using also $\mathcal{H}^2(Y_j) = -Y_j$, as follows
\begin{equation}\label{transformation.physical}
\bm{X}(\theta)=\frac1{\sqrt{2}}\begin{bmatrix}
\mathcal{H}Y_1(\theta)+Y_2(\theta)\\
Y_1(\theta)+\mathcal{H}Y_2(\theta)
\end{bmatrix},
\quad
\bm{Y}(\theta)=\frac1{\sqrt{2}}\begin{bmatrix}
-\mathcal{H}X_1(\theta)+X_2(\theta)\\
X_1(\theta)-\mathcal{H}X_2(\theta)
\end{bmatrix}.
\end{equation}
Because, for $k\neq0$, $P(k)$ is a unitary matrix, it holds that $\|P(k)\|=\|P(k)^{-1}\|=1$, and therefore
\begin{equation*}
|\widehat{\bm{Y}}(k)|= |\widehat{\bm{X}}(k)|,\qquad k\neq0,
\end{equation*}
and thus 
\begin{equation}\label{equivalence}
    \|\bm{X}\|_{\foneonenu}=\|\bm{Y}\|_{\foneonenu}.
\end{equation}
We will use this norm equivalence several times in the following.

Notice that the first Fourier coefficient of a uniformly parametrized circle \eqref{unifcircles} is given by 
\begin{equation*}
\widehat{\bm{X}}_c(1)=\frac12\begin{bmatrix}
a+bi\\
-ia+b
\end{bmatrix},
\end{equation*}
and in  the $\bm{Y}$ variable 
\begin{equation}\label{circleY}
\widehat{\bm{Y}}_c(1)=\frac1{\sqrt{2}}\begin{bmatrix}
0\\
a+bi
\end{bmatrix}.
\end{equation}
Note that $\mH(\cos\theta) = \sin\theta$ and $\mH(\sin\theta) = -\cos\theta$.  
Then from the transformation \eqref{transformation.physical}  uniformly parametrized circles \eqref{vectors} in the $\bm{\mathcal{Y}}$ variable are spanned by
\begin{equation}\label{vectorsY}
\bm{\tilde{e}_r}(\theta)=\sqrt{2}\begin{bmatrix}
0\\
\cos{\theta}
\end{bmatrix},
\hspace{0.1cm}
\bm{\tilde{e}_t}(\theta)=\sqrt{2}
\begin{bmatrix}
0\\
-\sin{\theta}
\end{bmatrix},
\hspace{0.1cm}
\bm{\tilde{e}_1}=\frac{1}{\sqrt{2}}\begin{bmatrix}
0\\1
\end{bmatrix},
\hspace{0.1cm}
\bm{\tilde{e}_2}=\frac{1}{\sqrt{2}}\begin{bmatrix}
1\\0
\end{bmatrix}.
\end{equation}
Further the second component of $\widehat{\bm{Y}}(1)$ becomes zero after the operation of  $\mathcal{D}(1)$ is applied, which corresponds to the fact that uniformly parametrized circles are steady-states. 
Therefore, we will split the curve $\bm{\mathcal{Y}}(\theta)=\bm{Y_c}(\theta)+\bm{Y}(\theta)$, with 
$$\widehat{\bm{Y}}(0)=\begin{bmatrix}
0\\0
\end{bmatrix},\hspace{0.2cm} \widehat{Y}_2(1)=0,$$
since those frequencies are contained in the time-dependent circle \eqref{unifcircles}. In other words, $\bm{Y}$ is the projection of $\bm{\mathcal{Y}}$ onto the orthogonal complement of the vector space spanned by \eqref{vectorsY}.
In fact, the system of equations \eqref{evolutionFourierY} does not provide dissipation for the zero frequency of $\bm{\mathcal{Y}}$ nor for the second component of its first frequency (i.e., for uniformly parametrized circles). We thus can only expect decay for $\bm{Y}$. It is convenient then to write the equations of those frequencies in \eqref{evolutionFourierY} apart:
\begin{equation}\label{systemauxY}
\begin{aligned}
\widehat{\bm{\mathcal{Y}}}_t(0)=\partial_t\widehat{\bm{Y}}_c(0)&=   P(0)^{-1}\mathcal{F}\big(\bm{\mathcal{N}}(\bm{X}_c,\bm{X})\big)(0),
\\
\partial_t\widehat{\mathcal{Y}}_{1}(1)=\partial_t\widehat{Y}_{1}(1)&=-A_e\widehat{Y}_1(1)+\Big(P(1)^{-1}\mathcal{F}\big(\bm{\mathcal{N}}(\bm{X}_c,\bm{X})\big)(1)\Big)_1,
\\
\partial_t\widehat{\mathcal{Y}}_{2}(1)=\partial_t\widehat{Y}_{c,2}(1)&=\Big(P(1)^{-1}\mathcal{F}\big(\bm{\mathcal{N}}(\bm{X}_c,\bm{X})\big)(1)\Big)_2,
\\
\widehat{\bm{\mathcal{Y}}}_t(k)=\widehat{\bm{Y}}_t(k)&=-\frac{A_e}2\mathcal{D}(k)\widehat{\bm{Y}}(k)+P(k)^{-1}\mathcal{F}\big(\bm{\mathcal{N}}(\bm{X}_c,\bm{X})\big)(k), \hspace{0.2cm} k\geq2.
\end{aligned}
\end{equation}
Therefore, we study the evolution in time of $\|\bm{Y}\|_{\foneonenu}$, which is given by
\begin{equation*}
\begin{aligned}
\frac{d}{dt}\|\bm{Y}\|_{\foneonenu}&=\frac{d}{dt}\Big(2\sum_{k\geq1}e^{\nu(t)k}k\sqrt{\widehat{Y}_1(k)\overline{\widehat{Y}_1(k)}+\widehat{ Y}_2(k)\overline{\widehat{ Y}_2(k)}}\Big)\\
&\hspace{-2cm}=2\sum_{k\geq1}\nu'(t) k^2 e^{\nu(t)k}|\widehat{\bm{Y}}(k)|+2\sum_{k\geq1}e^{\nu(t)k}k\frac{\partial_t\widehat{\bm{Y}}(k)^T \overline{\widehat{\bm{Y}}(k)}+\widehat{\bm{Y}}(k)^T \overline{\partial_t\widehat{\bm{Y}}(k)}  }{2|\widehat{\bm{Y}}(k)|},
\end{aligned}
\end{equation*}
and introducing the time derivative \eqref{evolutionFourierY}, with $\bm{\mathcal{N}}=\bm{\mathcal{N}}(\bm{X}_c,\bm{X})=\bm{\mathcal{N}}(\bm{\mathcal{X}})$
we have
\begin{equation*}
\begin{aligned}
\frac{d}{dt}\|\bm{Y}\|_{\foneonenu}&=2\sum_{k\geq1}\nu'(t) k^2 e^{\nu(t)k}|\widehat{\bm{Y}}(k)|
\\
&\hspace{-1cm}\quad-2A_e\sum_{k\geq1}e^{\nu(t)k}k \frac{(k+1)|\widehat{Y}_1(k)|^2+(k-1)|\widehat{Y}_2(k)|^2            }{2|\widehat{\bm{Y}}(k)|}\\
&\hspace{-1cm}\quad+2\sum_{k\geq1}e^{\nu(t)k}k \frac{\big(P(k)^{-1}\widehat{\bm{\mathcal{N}}(\bm{\mathcal{X}})}(k)\big)^T \overline{\widehat{\bm{Y}}(k)} \! +\! \widehat{\bm{Y}}(k)^T\overline{\big(P(k)^{-1}\widehat{\bm{\mathcal{N}}(\bm{\mathcal{X}})}(k)\big)} }{2|\widehat{\bm{Y}}(k)|}.
\end{aligned}
\end{equation*}
Noticing that for $k\geq1$ we have
\begin{multline*}
    -A_e k\Big((k+1)|\widehat{Y}_1(k)|^2+(k-1)|\widehat{Y}_2(k)|^2\Big)\frac{1}{|\widehat{\bm{Y}}(k)|}\\\
    =
    -A_e k(k-1)|\widehat{\bm{Y}}(k)-2A_e k\frac{|\widehat{Y}_1(k)|^2}{|\widehat{\bm{Y}}(k)|},
\end{multline*}
 we can then see a diffusion term coming from the linear part:
\begin{multline}\label{diff}
\frac{d}{dt}\|\bm{Y}\|_{\foneonenu}
\leq 
-A_e\sum_{k\geq1}e^{\nu(t)k}k(k-1)|\widehat{\bm{Y}}(k)|-2A_e\sum_{k\geq1}e^{\nu(t)k}k\frac{|\widehat{Y}_1(k)|^2}{|\widehat{\bm{Y}}(k)|} 
\\
+2\sum_{k\geq1}\!\nu'(t) k^2 e^{\nu(t)k}|\widehat{\bm{Y}}(k)|\!+\!2\sum_{k\geq1}\!e^{\nu(t)k}k |\big(P(k)^{-1}\widehat{\bm{\mathcal{N}}(\bm{\mathcal{X}})}(k)\big)|.
\end{multline}
The balance above does not include the control of $\widehat{\bm{Y}}_{c}$.  We will  show in Subsection \ref{existence} that the evolution of $\widehat{\bm{Y}}_c(0)$, that is, of the center, can be controlled by all the other frequencies.  
Moreover, the incompressibility condition \eqref{incomp} allows us to control $\widehat{Y}_{c,2}(1)$ as follows
\begin{equation}\label{incompcon}
\begin{aligned}
V_0&=\pi=\frac12\int_{-\pi}^\pi \bm{\mathcal{X}}(\theta)\wedge \partial_\theta \bm{\mathcal{X}}(\theta)d\theta=\frac12\int_{-\pi}^\pi \Big(\mathcal{X}_1 \partial_\theta \mathcal{X}_2-\mathcal{X}_2\partial_\theta \mathcal{X}_1\Big)d\theta\\
&=\frac14\int_{-\pi}^\pi\Big(\big(\mathcal{H}\mathcal{Y}_1+\mathcal{Y}_2\big)\big(\partial_\theta\mathcal{Y}_1+\Lambda\mathcal{Y}_2\big)-\big(\mathcal{Y}_1+\mathcal{H}\mathcal{Y}_2\big)\big(\Lambda\mathcal{Y}_1+\partial_\theta\mathcal{Y}_2\big)d\theta\Big).
\end{aligned}
\end{equation}
Performing the products and taking into account the following equalities
\begin{equation*}
\begin{aligned}[alignment]
\int_{-\pi}^\pi\mathcal{H}\mathcal{Y}_i\Lambda \mathcal{Y}_j d\theta &=\int_{-\pi}^\pi \mathcal{Y}_i\partial_\theta\mathcal{Y}_j d\theta, \quad 
\int_{-\pi}^\pi\mathcal{H}\mathcal{Y}_j\p_\theta \mathcal{Y}_j d\theta &=
-\int_{-\pi}^\pi \mathcal{Y}_j\Lambda \mathcal{Y}_j d\theta,
\end{aligned}
\end{equation*}
we obtain that
\begin{equation*}
\begin{aligned}
\pi&=\frac12\int_{-\pi}^\pi \big(\mathcal{Y}_2\Lambda\mathcal{Y}_2-\mathcal{Y}_1\Lambda\mathcal{Y}_1\big)d\theta=\pi \big(\widehat{\mathcal{Y}_2\Lambda \mathcal{Y}_2}(0)-\widehat{\mathcal{Y}_1\Lambda \mathcal{Y}_1}(0)\big)\\
&=\pi\sum_{k\in\mathbb{Z}}\big( |k| \widehat{\mathcal{Y}}_2(k)\widehat{\mathcal{Y}}_2(-k)-|k|\widehat{\mathcal{Y}}_1(k)\widehat{\mathcal{Y}}_1(-k)\big)\\
&=\pi\!\sum_{k\in\mathbb{Z}}|k|\Big(|\widehat{Y}_{c,2}(k)|^2\!+\!\widehat{Y}_{c,2}(k)\widehat{Y}_2(-k)\!+\!\widehat{Y}_2(k)\widehat{Y}_{c,2}(-k)\!+\!|\widehat{Y}_2(k)|^2\!-\!|\widehat{Y}_1(k)|^2\Big),
\end{aligned}
\end{equation*}
where we have used that $\widehat{Y}_{c,1}(k)=0$ for $k\neq0$.
We can also eliminate the terms $\widehat{Y}_{c,2}(k)\widehat{Y}_{2}(-k)$ and $\widehat{Y}_{2}(k)\widehat{Y}_{c,2}(-k)$, since $\widehat{Y}_2(1)=0$ and $\widehat{Y}_{c,2}(k)=0$ for $k\neq0,\pm 1$. Therefore,
\begin{equation*}
\begin{aligned}
\frac12=\frac{a^2+b^2}{2}+\sum_{k\geq1}k\big(|\widehat{Y}_2(k)|^2-|\widehat{Y}_1(k)|^2\big).
\end{aligned}
\end{equation*}
And so the incompressibility condition translates to the constraint
\begin{equation}\label{radius}
|\widehat{Y}_{c,2}(1)|^2=\frac{a^2+b^2}2=\frac{R^2}2=\frac12-\sum_{k\geq1}k\big(|\widehat{Y}_2(k)|^2-|\widehat{Y}_1(k)|^2\big).
\end{equation}
Then, we can obtain an upper bound as follows
\begin{equation*}
\begin{aligned}
|\widehat{Y}_{c,2}(1)|^2&\leq\frac12+\sum_{k\geq1}k\big(|\widehat{Y}_2(k)|^2+|\widehat{Y}_1(k)|^2\big)= \frac12+\sum_{k\geq1}\Big(k^{1/2}|\widehat{\bm{Y}}(k)|\Big)^2\\
&\leq \frac12+\Big(\sum_{k\geq1}k^{1/2}|\widehat{\bm{Y}}(k)|\Big)^2= \frac12+\frac{1}{4}\|\bm{Y}\|_{\dot{\mathcal{F}}^{\frac12,1}}^2\\
&\leq \frac12+\frac{1}{4}\|\bm{Y}\|_{\dot{\mathcal{F}}^{\frac12,1}}^2,
\end{aligned}
\end{equation*}
and analogously we find the lower bound
\begin{equation}\label{radius.lower.bd}
\frac{R^2}2=|\widehat{Y}_{c,2}(1)|^2\geq\frac12-\frac{1}{4}\|\bm{Y}\|_{\dot{\mathcal{F}}^{\frac12,1}}^2.
\end{equation}
Recalling the relationship between $\bm{X}$ and $\bm{Y}$ in \eqref{equivalence}, we finally obtain that
\begin{equation*}
\frac12-\frac14\|\bm{X}\|_{\foneonenu}^2\leq |\widehat{Y}_{c,2}(1)|^2\leq \frac12+\frac14\|\bm{X}\|_{\foneonenu}^2,
\end{equation*}
and, since $|\widehat{\bm{Y}_c}(1)|^2=\frac{R^2}{2}=\frac{|\widehat{\bm{X}_c}(1)|^2}{2}$,
\begin{equation*}
1-\frac12\|\bm{X}\|_{\foneonenu}^2\leq |\widehat{\bm{X}}_{c}(1)|^2\leq 1+\frac12\|\bm{X}\|_{\foneonenu}^2,
\end{equation*}
so using the notation $R^2=a^2+b^2$, we have that
\begin{equation}\label{radiusbound}
\frac1{\sqrt{1+\frac12\|\bm{X}\|_{\foneonenu}^2}}\leq \frac1{R}\leq \frac{1}{\sqrt{1-\frac12\|\bm{X}\|_{\foneonenu}^2}}.
\end{equation}
The upper bound above motivates us to define
\begin{equation}\label{C1}
C_1\eqdef C_1(\|\bm{X}\|_{\foneonenu})=\frac{1}{\sqrt{1-\frac12\|\bm{X}\|_{\foneonenu}^2}}.
\end{equation}
We will later use \eqref{radiusbound} to control the size of $R$ when $\|\bm{X}\|_{\foneonenu}(t)\to 0$ as $t\to \infty$.

\subsection{Complete System.}\label{completesys}

We finally summarize the final form of the system of equations that describes our problem. The system given by \eqref{Xteqn} and \eqref{viscosityjump} for $\bm{\mathcal{X}}$  was replaced by the equations \eqref{systemauxY} on the Fourier coefficients of the associated variable $\bm{\mathcal{Y}}$ from \eqref{Y}. 
We recall that we decompose $\bm{\mathcal{Y}}$ into a time-dependent circle $\bm{Y_c}$ plus the deviation from the circle given by $\bm{Y}$.   In other words, we decompose $\bm{\mathcal{Y}}$ into its projection onto the vector space spanned by \eqref{vectorsY} represented by $\bm{Y_c}$ and its orthogonal complement represented by $\bm{Y}$. Therefore, recalling \eqref{circleY}, we have
\begin{equation}\label{eq1}
\widehat{\bm{Y}}(0)=0,\hspace{0.3cm}\widehat{Y_{2}}(1)=0,\hspace{0.3cm} \widehat{\bm{Y_c}}(k)=0\hspace{0.2cm} k\neq 0,1,\hspace{0.3cm}\widehat{Y_{c,1}}(1)=0.
\end{equation}
Now, for $k=1$ and $k\geq2$ separately, we have that
\begin{equation}\label{eqs}
\begin{aligned}
\widehat{\p_t Y_1}(1)&=-A_e \widehat{Y_1}(1)+\Big(P(1)^{-1}\widehat{\bm{\mathcal{N}}(\bm{X}_c,\bm{X})}(1)\Big)_1,\\
\widehat{\p_t \bm{Y}}(k)&=-\frac{A_e}2\mathcal{D}(k)\widehat{\bm{Y}}(k)+P(k)^{-1}\widehat{\bm{\mathcal{N}}(\bm{X}_c,\bm{X})}(k),
\end{aligned}
\end{equation}
where $\bm{X}_c$ and $\bm{X}$ are given in terms of $\bm{Y}_c$ and $\bm{Y}$ in  \eqref{Y}. In the following paragraphs, we will write one or the other without distinction for simplicity of notation.
The incompressibility condition \eqref{incompcon} yielded \eqref{radius}.  Thus in particular 
\begin{equation}\label{freq12}
    \sqrt{\frac12-\frac{1}{4}\|\bm{Y}\|_{\foneonenu}^2}\leq |\widehat{Y}_{c,2}(1)|\leq \sqrt{\frac12+\frac{1}{4}\|\bm{Y}\|_{\foneonenu}^2}.
\end{equation}
To close the system, notice that $\widehat{\bm{Y_c}}(0)=P(k)^{-1}\widehat{\bm{X}_c}(0)$ and, from \eqref{Xteqn}, we have
\begin{equation*}
\widehat{\p_t \bm{X}_c}(0)=\frac1{2\pi}\int_\mathbb{S}\int_\mathbb{S}G(\Delta \bm{X}_c+\Delta \bm{X})\bm{F}(\eta)d\eta d\theta,
\end{equation*}
with $\bm{F}$ defined by \eqref{viscosityjump}. We can also write the equation for $\widehat{\bm{X}_c}(0)$ using \eqref{nonlinearexpansion} or \eqref{evolutionFourier} and recalling that the zero frequency of the linear part vanishes,
\begin{equation}\label{zerofreq}
    \widehat{\p_t\bm{X}_c}(0)=\widehat{\bm{\mathcal{N}}(\bm{X}_c,\bm{X})}(0).
\end{equation}
We notice that the evolution of the zero frequency $\widehat{\bm{Y_c}}(0)$, corresponding to the center, is decoupled from all the other equations (in terms of the $\widehat{\bm{Y_c}}(0)$ variable), because $\widehat{\bm{X}_c}(0)$  does not appear on the right hand side of \eqref{Xteqn} and \eqref{viscosityjump} and therefore also \eqref{zerofreq}. 
This can be seen from the fact that in \eqref{Xteqn} $G$ only depends on the difference $\Delta \bm{\mathcal{X}}=\bm{\mathcal{X}}(\theta)-\bm{\mathcal{X}}(\eta)$ and in \eqref{viscosityjump} the expression for $\bm{\mathcal{S}}$ only depends on $\p_\theta\bm{\mathcal{X}}$ and $\Delta \bm{\mathcal{X}}$.
In summary, the system to determine $\bm{\mathcal{Y}}$ (equivalently determining $\bm{\mathcal{X}}$ via \eqref{Y}) consists of \eqref{eq1}, \eqref{eqs}, \eqref{radius}, and \eqref{zerofreq}. That is, all together we have that 
\begin{equation}\label{systemfinal}
\begin{aligned}
    \widehat{\bm{Y}}(0)&=0,\hspace{0.3cm}\widehat{Y}_{2}(1)=0,\hspace{0.3cm} \widehat{\bm{Y}}_{c}(k)=0\hspace{0.2cm} k\neq 0,1,\hspace{0.3cm}\widehat{Y}_{c,1}(1)=0,\\
    \partial_t\widehat{\bm{Y}}_{c}(0)&=P(0)^{-1}\widehat{\bm{\mathcal{N}}(\bm{X}_{c},\bm{X})}(0),\\
    \partial_t\widehat{Y}_{1}(1)&=-A_e \widehat{Y}_{1}(1)+\Big(P(1)^{-1}\widehat{\bm{\mathcal{N}}(\bm{X}_{c},\bm{X})}(1)\Big)_1,\\
    \partial_t\widehat{\bm{Y}}(k)&=-\frac{A_e}2\mathcal{D}(k)\widehat{\bm{Y}}(k)+P(k)^{-1}\widehat{\bm{\mathcal{N}}(\bm{X}_{c},\bm{X})}(k),\qquad  k\geq 2,\\
    |\widehat{Y}_{c,2}(1)|^2&=\frac12-\sum_{k\geq1}k\big(|\widehat{Y}_{2}(k)|^2-|\widehat{Y}_{1}(k)|^2\big),
    \end{aligned}
\end{equation} 
with $\bm{F}$ given in \eqref{viscosityjump}, and $\bm{Y}$, $\bm{X}$ related by \eqref{Y}.

To prove Theorem \ref{MainTheorem} (see Section \ref{secMain}) we will use system \eqref{systemfinal} to obtain the energy balance \eqref{diff} to show the decay of $\bm{Y}$. We will need to perform \textit{a priori} estimates on the nonlinear terms, which in particular requires us to prove bounds for $\bm{F}$ due to the viscosity contrast.
Those estimates are performed in the next section. The decay for $\bm{Y}$ will allow us to control the evolution of the zero frequency, that is, of the center.

% \addtocontents{toc}{\vspace{0.2cm}}
\section{A priori estimates}\label{sec:secanalytic}

In this section we perform the \textit{a priori} estimates on $\bm{X}$ and $\bm{F}$ that will be used in the proof of our main Theorem \ref{MainTheorem}. First, in Subsection \ref{nonlinearestimates}, we estimate the nonlinear terms in \eqref{nonlinearexpansion} in terms of $\bm{X}$ and $\bm{F}$. Next, in Subsection \ref{secF}, we obtain the \textit{a priori} estimates for $\bm{F}$ in \eqref{viscosityjump} in terms of $\bm{X}$. In order to get the result with critical regularity, we have to get uniform bounds for some Fourier multipliers given by principal values (see Lemma \ref{lemmaI}).

\subsection{\textit{A Priori} Estimates on $\bm{X}$}\label{secanalytic}

\begin{prop}\label{nonlinearestimates}
Assume that $\bm{F}_0$, $\bm{F}_L$, $\bm{F}_N\in \mathcal{F}^{0,1}_\nu$ and $\bm{X}\in\dot{\mathcal{F}}^{2,1}_\nu$. Then, the nonlinear term  
$\bm{\mathcal{N}}=\bm{\mathcal{N}}(\bm{X}_c,\bm{X})(\theta)=\bm{\mathcal{N}}(\bm{\mathcal{X}})$ in \eqref{nonlinearexpansion} satisfies the following estimate in $\dot{\mathcal{F}}^{1,1}_\nu$,
\begin{equation}\label{Nestimate}
\begin{aligned}
\|\bm{\mathcal{N}}\|_{\foneonenu}&\leq 11\sqrt{2}D_1\|\bm{X}\|_{\foneonenu}\|\bm{F}_L\|_{\fzeronenu}\\
&\quad+ \frac{147}{2}D_2\|\bm{F}_0\|_{\fzeronenu}\|\bm{X}\|_{\foneonenu}\|\bm{X}\|_{\ftwoonenu}+\frac{9}{4}D_3\|\bm{F}_N\|_{\fzeronenu},
\end{aligned}
\end{equation}
where $D_i=D_i(\|\bm{X}\|_{\foneonenu},\nu_m)\approx 1$ are increasing functions of $\|\bm{X}\|_{\foneonenu}$ and $\nu_m$ such that 
$$\lim_{\|\bm{X}\|_{\foneonenu}\to0^+}D_i(\|\bm{X}\|_{\foneonenu},0)=1,$$
and are defined in \eqref{D1D2D3}.
\end{prop}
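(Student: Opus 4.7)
My plan is to decompose $\bm{\mathcal{N}}$ according to its four constituent integrals (involving $G_L\bm{F}_L$, $G_N\bm{F}_0$, $G_N\bm{F}_L$, and $G\bm{F}_N$, as displayed just after \eqref{nonlinearexpansion}) and estimate each in $\dot{\mathcal{F}}^{1,1}_\nu$ separately. In these Wiener-type spaces, multiplication corresponds to convolution of Fourier coefficients, so I will repeatedly use the Banach-algebra inequality $\|fg\|_{\mathcal{F}^{0,1}_\nu}\le\|f\|_{\mathcal{F}^{0,1}_\nu}\|g\|_{\mathcal{F}^{0,1}_\nu}$ and $\|\partial_\theta f\|_{\mathcal{F}^{0,1}_\nu}\le\|f\|_{\dot{\mathcal{F}}^{1,1}_\nu}$, together with their analogues under the factor $e^{\nu(t)|k|}|k|$ from \eqref{fsneonenu}.

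The preparatory step is to express the difference quotient $\Delta_\eta\bm{X}(\theta)$ defined in \eqref{deltaeta} directly in Fourier as
\begin{equation*}
\Delta_\eta\bm{X}(\theta)=\sum_{k\neq 0}\widehat{\bm{X}}(k)\,e^{ik(\theta+\eta)/2}\frac{\sin(k(\theta-\eta)/2)}{\sin((\theta-\eta)/2)},
\end{equation*}
where the last ratio expands as a sum of $|k|$ exponentials of unit modulus. This yields the pointwise-in-$\eta$ bound $\|\Delta_\eta\bm{X}(\cdot)\|_{\mathcal{F}^{0,1}_\nu}\le\|\bm{X}\|_{\dot{\mathcal{F}}^{1,1}_\nu}$. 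Combined with the two-frequency structure of $\bm{X}_c$ and the identities \eqref{comput1}--\eqref{comput3}, this reduces every scalar and tensor product appearing in $G_L, G_N$ to a convolution, with the residual Fourier multipliers corresponding to $\log|2\sin(\cdot/2)|$ and $1/(2\tan(\cdot/2))$ controlled uniformly in frequency by Lemma \ref{lemmaI}.

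I then handle the four pieces in turn. The first, $\int G_L\bm{F}_L\,d\eta$, is linear in $\bm{X}$ and produces $\|\bm{X}\|_{\dot{\mathcal{F}}^{1,1}_\nu}\|\bm{F}_L\|_{\mathcal{F}^{0,1}_\nu}$ with an explicit prefactor counting the four tensor-valued terms in \eqref{G1}. The two pieces with $G_N$ require expanding $\mathcal{R}_1, \mathcal{R}_2$ from \eqref{R1}, \eqref{R2} as double series and summing them geometrically; convergence requires $\frac{2}{R^2}|\Delta_\eta\bm{X}_c\cdot\Delta_\eta\bm{X}|+\frac{1}{R^2}|\Delta_\eta\bm{X}|^2<1$, which via the preparatory bound above and \eqref{radiusbound} translates to a quantitative smallness condition on $\|\bm{X}\|_{\dot{\mathcal{F}}^{1,1}_\nu}$ and, after resumming, produces constants $D_i$ tending to $1$ as $\|\bm{X}\|_{\dot{\mathcal{F}}^{1,1}_\nu}\to 0$. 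The factor $\|\bm{X}\|_{\dot{\mathcal{F}}^{2,1}_\nu}$ in the final bound enters here, when the outer derivative defining $\dot{\mathcal{F}}^{1,1}_\nu$ lands on a factor $\Delta_\eta\bm{X}$ inside the quadratic-and-higher terms of $G_N$. The fourth piece keeps $G$ unexpanded and is estimated by $\|\bm{F}_N\|_{\mathcal{F}^{0,1}_\nu}$ times a Fourier-multiplier bound from Lemma \ref{lemmaI}.

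The main obstacle is not conceptual but the careful bookkeeping required to extract the sharp numerical constants $11\sqrt{2}$, $147/2$, $9/4$: these feed directly into the threshold $k(A_\mu)$ in Theorem \ref{MainTheorem}, so every matrix entry in \eqref{G1}, \eqref{GN}, every summand in the expansion of $\mathcal{R}_{1,2}$, and every principal-value Fourier multiplier must be tracked quantitatively rather than just qualitatively. At the same time, the $1/R$ factors must be controlled via \eqref{C1} and \eqref{radiusbound} so that the resulting functions $D_i(\|\bm{X}\|_{\dot{\mathcal{F}}^{1,1}_\nu})$ are indeed increasing with limit $1$ as $\|\bm{X}\|_{\dot{\mathcal{F}}^{1,1}_\nu}\to 0^+$.
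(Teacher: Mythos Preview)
Your plan matches the paper's proof essentially step for step: the same four-way split $\bm{\mathcal{N}}_1,\ldots,\bm{\mathcal{N}}_4$, the same passage to Fourier via \eqref{multiplier}--\eqref{multiplierSecond} with the oscillatory integrals handled by Lemma \ref{lemmaI}, the same geometric summation of $\mathcal{R}_1,\mathcal{R}_2$, and the same bookkeeping of constants through \eqref{C1}--\eqref{C11}. One small correction: the factor $\|\bm{X}\|_{\ftwoonenu}$ does \emph{not} arise from the derivative landing on $\Delta_\eta\bm{X}$ (Lemma \ref{lemmaI} applied to $m(k,\eta)$ still returns only $\|\bm{X}\|_{\foneonenu}$ there); it enters only at the very end by the trivial inequality $\|\bm{X}\|_{\foneonenu}\le\|\bm{X}\|_{\ftwoonenu}$ applied to one factor of the $\bm{\mathcal{N}}_2$ bound \eqref{N2}.
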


In the proof, the following multiplier will come up frequently:
\begin{equation}\label{mmult}
    \begin{aligned}
     m(k,\eta)\eqdef \frac{1-\frac{\sin{(k\eta/2)}}{k\tan{(\eta/2)}}e^{-i k\eta/2}}{2\sin{(\eta/2)}},\qquad |k|\geq1,
    \end{aligned}
\end{equation}
and we define $m(0,\eta)=0$.

Now let $n\geq1$, $k=k_0$, $k_1,\dots, k_{2n}$ be integers that further satisfy $|k_j-k_{j+1}|\ge 1$ for all $j=0,1,\ldots, 2n-1$.
We define the integral of type $I_n=I_n(k,k_1,\dots,k_{2n})$ by
    	\begin{equation}\label{In.Integral}
I_n \eqdef \textup{pv}\!\int_{-\pi}^\pi m(k-k_{1},\eta)
\prod_{j=1}^{2n-1}\frac{\sin{((k_j-k_{j+1})\eta/2)}}{(k_j-k_{j+1})\sin{(\eta/2)}}
e^{-i(k_1 + k_{2n})\eta/2}d\eta.
	\end{equation}	
We further define $I_n=0$ if $k_j=k_{j+1}$ for any $j=0,1,\ldots, 2n-1$.  We will also consider the integral, $I'_n = I'_n(k_1,\dots,k_{2n})$, under the same conditions
    	\begin{equation}\label{Sn.Integral}
I'_n \eqdef \textup{pv}\!\int_{-\pi}^\pi 
\frac{\sin{((k_1+k_{2n})\eta/2)}}{\sin{(\eta/2)}}
\prod_{j=1}^{2n-1}\frac{\sin{((k_j-k_{j+1})\eta/2)}}{(k_j-k_{j+1})\sin{(\eta/2)}}d\eta.
	\end{equation}	
We again define $I'_n=0$ if $k_j=k_{j+1}$ for any $j=1,\ldots, 2n-1$. 
	In the proofs of the a priori estimates in this section we will frequently use the following lemma.

\begin{lemma}\label{lemmaI}
We recall \eqref{mmult}, \eqref{In.Integral} and \eqref{Sn.Integral}.
    	Then, the following uniform bounds hold:
	\begin{equation*}
	\left| I_n(k,k_1,\dots,k_{2n})\right| \leq 2\pi,
	\end{equation*}
	and 
	\begin{equation*}
	\left| I'_n(k_1,\dots,k_{2n})\right| \leq 2\pi.
	\end{equation*}
\end{lemma}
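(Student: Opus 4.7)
Both integrals have, despite the pv notation, smooth integrands on $[-\pi,\pi]$: the numerator $\sin(K_j\eta/2)$ in each ratio vanishes to the right order to cancel the $\sin(\eta/2)$ in the denominator at $\eta=0$, and $m(k,\eta)$ in \eqref{mmult} is regular at the origin (one checks $m(k,0)=ik/2$). The key pointwise input is the Dirichlet-kernel identity
\[
\frac{\sin(N\eta/2)}{\sin(\eta/2)} \;=\; \sign(N)\sum_{r=0}^{|N|-1} e^{i\,\sign(N)(2r-|N|+1)\eta/2}, \quad N\in\mathbb{Z}\setminus\{0\},
\]
which gives $\left|\sin(N\eta/2)/(N\sin(\eta/2))\right|\le 1$ and shows that each product factor in both $I_n$ and $I'_n$ is bounded by one in magnitude pointwise. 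Since the outer factors $\sin((k_1+k_{2n})\eta/2)/\sin(\eta/2)$ and $m(k-k_1,\eta)$ can however grow in the indices, the bound must come from oscillation/cancellation rather than naive $L^\infty$ estimates.

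I would treat $I'_n$ by induction on $n$. The base case $n=1$ reduces to an explicit computation: the product-to-sum identity $\sin A\sin B=\tfrac{1}{2}(\cos(A-B)-\cos(A+B))$ collapses the two sine factors to $\tfrac{1}{2}(\cos(k_2\eta)-\cos(k_1\eta))$, and the Fejér identity $\int_{-\pi}^{\pi}(1-\cos(K\eta))/(2\sin^2(\eta/2))\,d\eta = 2\pi|K|$ yields $I'_1(k_1,k_2)=2\pi(|k_1|-|k_2|)/(k_1-k_2)$, of modulus at most $2\pi$. For the inductive step I would pair the leading $\sin((k_1+k_{2n})\eta/2)$ factor with an adjacent Dirichlet-kernel factor via product-to-sum, reducing $I'_n$ to a signed combination of $I'_{n-1}$-type integrals with shifted indices; since the remaining product factors remain bounded by one, the bound $2\pi$ should be preserved.

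For $I_n$, I would first rewrite the multiplier via the identity $e^{-ik\eta/2}\sin(k\eta/2)\cos(\eta/2) = \tfrac{i}{2}\cos(\eta/2)(e^{-ik\eta}-1)$ to obtain the clean decomposition
\[
m(k,\eta) \;=\; \frac{1}{2\sin(\eta/2)} \;-\; \frac{i\cos(\eta/2)\bigl(e^{-ik\eta}-1\bigr)}{4k\sin^2(\eta/2)}.
\]
The first piece contributes a Hilbert-type integral against $\prod_l\phi_{K_l}(\eta)e^{-i(k_1+k_{2n})\eta/2}$, a bounded trigonometric polynomial; since the periodic Hilbert symbol $-i\,\sign(\cdot)$ is a contraction in $\ell^\infty$, this contribution is bounded by $\pi$. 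The second piece has the Fejér denominator $1/\sin^2(\eta/2)$ and, after expanding $(e^{-ik\eta}-1)$ as a finite sum of Fourier modes, reduces to integrals of exactly the $I'_n$ shape treated above, giving a bound of $\pi$. Adding the two gives $|I_n|\le 2\pi$. The main obstacle is the combinatorial bookkeeping in the inductive step for $I'_n$: one must show that the alternating-sign expansion of $\prod_j\phi_{K_j}$ against the Fejér/Hilbert denominators collapses to a multiplier of modulus at most one, rather than amplifying with $n$. The cleanest interpretation I can see is that $I'_n/(2\pi)$ realises a higher-order divided difference of $|\cdot|$—generalising the base-case formula $(|k_1|-|k_2|)/(k_1-k_2)$—whose uniform bound $\le 1$ reflects the $1$-Lipschitz character of the absolute value.
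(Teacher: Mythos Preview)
Your decomposition of $I_n$ is essentially the paper's: after taking odd parts, your two pieces are exactly $-\tfrac{i}{2}I'_n$ and $\tfrac{i}{2}I''$, where $I''$ is $I'_n$ with one extra Dirichlet factor $\sin((k-k_1)\eta/2)/((k-k_1)\sin(\eta/2))$ and a harmless $\cos(\eta/2)$ weight. So the reduction of $I_n$ to two $I'$-type integrals is correct. The problems are in how you bound the pieces.

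First, the claim that the Hilbert-type piece is bounded by~$\pi$ because ``the periodic Hilbert symbol $-i\,\sign(\cdot)$ is a contraction in $\ell^\infty$'' does not do what you need. That piece equals $-\tfrac{i}{2}\int_{-\pi}^{\pi}\frac{\sin((k_1+k_{2n})\eta/2)}{\sin(\eta/2)}\,P(\eta)\,d\eta$ with $|P|\le 1$; the Dirichlet kernel $\sin(L\eta/2)/\sin(\eta/2)$ has $L^\infty$-norm $|L|$, and an $\ell^\infty$ bound on a Fourier multiplier gives no $L^\infty$ bound on the function. So this step is circular: it needs $|I'_n|\le 2\pi$, which is precisely what you have not yet proved.

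Second, and this you flag yourself, the inductive step for $I'_n$ is missing. Product-to-sum on the leading factor and one adjacent Dirichlet factor drops the total count by one, not two, so you do not land on an $I'_{n-1}$; and the ``higher-order divided difference of $|\cdot|$'' interpretation, while plausible for $n=1$, is asserted, not proved.

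The paper sidesteps all of this with a direct combinatorial argument that you should look at: expand every ratio $\sin(N\eta/2)/\sin(\eta/2)$ via your own Dirichlet identity into a finite sum of exponentials $e^{iB\eta/2}$; the integer $B=B(m_1,\dots,m_{2n})$ depending linearly on the summation indices is always even, so $\int_{-\pi}^{\pi}e^{iB\eta/2}\,d\eta$ is $2\pi$ when $B=0$ and $0$ otherwise. This turns $I'_n$ into $2\pi$ times a normalized count of solutions to $B=0$. Since $B$ is affine in the last index $m_{2n}$, it vanishes for at most one value of $m_{2n}$; summing that single hit over the remaining indices and dividing by the normalizing product $\prod_j|k_j-k_{j+1}|$ gives a number of modulus at most~$1$. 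The same argument, with one extra factor and the $\cos(\eta/2)$ written as $\tfrac12(e^{i\eta/2}+e^{-i\eta/2})$, handles $I''$. No induction, no Hilbert-transform bounds, and the constant $2\pi$ falls out immediately.
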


This lemma will be proven at the end of this section.

\begin{proof}[Proof of Proposition \ref{nonlinearestimates}]
We first take a derivative of $\bm{\mathcal{N}}(\bm{X}_c,\bm{X})(\theta)$ in \eqref{nonlinearexpansion}	and let
\begin{equation}\label{Ni}
\begin{aligned}
\p_\theta\bm{\mathcal{N}}(\bm{X}_c,\bm{X})(\theta)=\bm{\mathcal{N}}_1(\theta)+\bm{\mathcal{N}}_2(\theta)+\bm{\mathcal{N}}_3(\theta)+\bm{\mathcal{N}}_4(\theta),
\end{aligned}
\end{equation}
where
\begin{equation*}
\begin{aligned}
\bm{\mathcal{N}}_1(\theta)&=\int_{\mathbb{S}} \p_\theta\big( G_L(\Delta_\eta\bm{X}_c(\theta),\Delta_\eta\bm{X}(\theta))\big)\bm{F}_L(\eta)d\eta,
\end{aligned}
\end{equation*}
\begin{equation*}
\begin{aligned}
\bm{\mathcal{N}}_2(\theta)&=\int_{\mathbb{S}}\p_\theta\big( G_N(\Delta_\eta\bm{X}_c(\theta),\Delta_\eta\bm{X}(\theta))\big)\bm{F}_0(\eta)d\eta,
\end{aligned}
\end{equation*}
\begin{equation*}
\begin{aligned}
\bm{\mathcal{N}}_3(\theta)&=\int_{\mathbb{S}}\p_\theta\big( G_N(\Delta_\eta\bm{X}_c(\theta),\Delta_\eta\bm{X}(\theta))\big)\bm{F}_L(\eta)d\eta,
\end{aligned}
\end{equation*}
\begin{equation*}
\begin{aligned}
\bm{\mathcal{N}}_4(\theta)&=\int_{\mathbb{S}}\p_\theta\big( G(\bm{\mathcal{X}}(\theta)-\bm{\mathcal{X}}(\eta))\big)\bm{F}_N(\eta)d\eta.
\end{aligned}
\end{equation*}
We will bound $\bm{\mathcal{N}}_i$ in $\fzeronenu$ for $i=1,2,3,4$.

\vspace{0.2cm}

\noindent\underline{$\bm{\mathcal{N}}_1$ estimates:} Taking a derivative in \eqref{G1}, we obtain that 
\begin{equation}\label{N1split}
\bm{\mathcal{N}}_1(\theta)=\sum_{i=0}^{10} \bm{\mathcal{N}}_{1,i}(\theta),
\end{equation}
and we proceed to bound each of these terms in $\fzeronenu$. We note that each term $\bm{\mathcal{N}}_{1,i}$ corresponds to when the derivative hits a different term inside \eqref{G1}.  The terms $\bm{\mathcal{N}}_{1,i}$ are written in \eqref{N11}, \eqref{N12term}, \eqref{N13},
\eqref{N14},
and
\eqref{N18etc}
in the following.

The first term $\bm{\mathcal{N}}_{1,1}(\theta)$ is given by
\begin{equation*}
\bm{\mathcal{N}}_{1,1}(\theta)=\frac{-1}{4\pi R^2}\int_\mathbb{S}\p_\theta\Delta_\eta \bm{X}_c(\theta)\cdot\Delta_\eta \bm{X}(\theta)\bm{F}_L(\eta)d\eta,
\end{equation*}
We first take the derivative of $\Delta_\eta \bm{X}_c(\theta)$ in \eqref{deltaeta} to obtain
\begin{equation*}
    \begin{aligned}
     \p_\theta\Delta_\eta \bm{X}_c(\theta)
     =
     \frac{\p_\theta \bm{X}_c(\theta)-\left(\bm{X}_c(\theta)-\bm{X}_c(\eta)\right)/\big(2\tan{\big(\frac{\theta-\eta}{2}\big)}\big)}{2\sin{\big(\frac{\theta-\eta}{2}\big)}}.
    \end{aligned}
\end{equation*}
Further define the operator $\derivdiff(\bm{X}_c)$ (and analogously $\derivdiff(\bm{X})$) to be $\p_\theta\Delta_\eta \bm{X}_c(\theta)$ as above
after taking the change of variables ${\eta\leftarrow \theta-\eta}$ as follows
\begin{equation}\label{deriv.diff.notation}
    \derivdiff(\bm{X}_c)(\theta,\eta)
    \eqdef
    \frac{\p_\theta \bm{X}_c(\theta)-\frac{\bm{X}_c(\theta)-\bm{X}_c(\theta-\eta)}{2\tan{(\eta/2)}}}{2\sin{(\eta/2)}}.
\end{equation}
Then we make the change of variables $\eta\leftarrow \theta-\eta$ to obtain
\begin{equation}\label{N11}
\bm{\mathcal{N}}_{1,1}(\theta)
=
\frac{-1}{4\pi R^2}\int_\mathbb{S}
\derivdiff(\bm{X}_c)(\theta,\eta)^T
\Delta_{\theta-\eta} \bm{X}(\theta)\bm{F}_L({\theta-\eta})d\eta,
\end{equation}
where we used transpose notation instead of a dot for future convenience in the notation.
We will also make extensive use of the following identities 
\begin{equation}\label{multiplier}
\begin{aligned}
\widehat{\Delta_{\theta-\eta} \bm{X}}(k)&=\frac{1-e^{-ik\eta}}{2\sin{(\eta/2)}}\widehat{\bm{X}}(k)
=
\frac{\sin{(k\eta/2)}}{k\sin{(\eta/2)}}e^{-ik\eta/2}\widehat{\p_\theta\bm{X}}(k),\\
\widehat{\Delta_{\theta-\eta} \bm{X}_c}(k)&=\frac{1-e^{-ik\eta}}{2\sin{(\eta/2)}}\widehat{\bm{X}_c}(k)=\frac{\sin{(k\eta/2)}}{k\sin{(\eta/2)}}e^{-ik\eta/2}\widehat{\p_\theta \bm{X}_c}(k).
\end{aligned}
\end{equation}
We remark that both terms above are $=0$ when $k=0$.   We further have
\begin{equation}\label{multiplierSecond}
        \widehat{\derivdiff(\bm{X}_c)}(k)=m(k,\eta)\widehat{\p_\theta\bm{X}}_c(k),
\end{equation}
where $m(k,\eta)$ is given by \eqref{mmult}.

Regarding the Fourier coefficients of the derivative of the circle \eqref{unifcircles} we have
\begin{equation}\label{circlefourier}
\widehat{\p_\theta\bm{X}_c}(k)=\frac{a+ib}{2}\delta_1(k)\begin{bmatrix}
i\\1
\end{bmatrix}-\frac{a-ib}{2}\delta_{-1}(k)\begin{bmatrix}
i\\-1
\end{bmatrix}.
\end{equation}
Taking Fourier transform in \eqref{N11}, we obtain that
\begin{equation*}
\begin{aligned}
\widehat{\bm{\mathcal{N}}_{1,1}}(k)&
=
\frac{-1}{4\pi R^2}\int_\mathbb{S}
\widehat{\derivdiff(\bm{X}_c)}(k)^T*\widehat{\Delta_{\theta-\eta} \bm{X}}
(k)*e^{-ik\eta}\widehat{\bm{F}_L}(k)d\eta
\\
=\frac{-1}{4\pi R^2}&\!\int_\mathbb{S}
\sum_{k_1\in\mathbb{Z}}\sum_{k_2\in\mathbb{Z}}\!
\widehat{\derivdiff(\bm{X}_c)}(k\!-\!k_1)^T\widehat{\Delta_{\theta-\eta} \bm{X}}
(k_1\!-\!k_2)e^{-i k_2\eta}\widehat{\bm{F}_L}(k_2)d\eta,
\end{aligned}
\end{equation*}
and plugging in \eqref{multiplier} and \eqref{multiplierSecond} we have that
\begin{equation*}
\begin{aligned}
\widehat{\bm{\mathcal{N}}_{1,1}}(k)&=\frac{-1}{4\pi R^2}\!\!\sum_{k_1\in\mathbb{Z}}\sum_{k_2\in\mathbb{Z}}\widehat{\p_\theta \bm{X}_c}(k\!-\!k_1)^T\widehat{\p_\theta\bm{X}}(k_1\!-\!k_2)\widehat{\bm{F}_L}(k_2)I_1(k,k_1,k_2),
\end{aligned}
\end{equation*}
with $I_1$ given by \eqref{In.Integral}.  
By Lemma \ref{lemmaI} we have $|I_1(k,k_1,k_2)|\leq 2\pi$.  Then we get that
\begin{equation}\label{N11f}
\begin{aligned}
|\widehat{\bm{\mathcal{N}}_{1,1}}(k)|&\leq\frac{1}{2 R^2}\sum_{k_1\in\mathbb{Z}}\sum_{k_2\in\mathbb{Z}}|\widehat{\p_\theta \bm{X}_c}(k-k_1)^T\widehat{\p_\theta\bm{X}}(k_1-k_2)||\widehat{\bm{F}_L}(k_2)|.
\end{aligned}
\end{equation}
Then, it follows from \eqref{circlefourier} that
\begin{equation}\label{pXcpX}
|\widehat{\p_\theta \bm{X}_c}(k-k_1)|\leq \frac{R}{\sqrt{2}}\delta_{1,-1}(k-k_1).
\end{equation}
We will now also use the notation \eqref{delta2notation}. 
In particular we have
\begin{equation}\notag
|\widehat{\p_\theta \bm{X}_c}(k-k_1)^T\widehat{\p_\theta\bm{X}}(k_1-k_2)|\leq \frac{\sqrt{2}}{2}R(\delta_1(k-k_1)+\delta_{-1}(k-k_1))|\widehat{\p_\theta\bm{X}}(k_1-k_2)|.
\end{equation}
 Therefore, we can write that
\begin{equation*}
\begin{aligned}
|\widehat{\bm{\mathcal{N}}_{1,1}}(k)|&\leq\frac{\sqrt{2}}{4R}\!\sum_{k_1\in\mathbb{Z}}\sum_{k_2\in\mathbb{Z}}\delta_{1,-1}(k-k_1)|\widehat{\p_\theta\bm{X}}(k_1\!-\!k_2)||\widehat{\bm{F}_L}(k_2)|.
\end{aligned}
\end{equation*}
We multiply by $e^{\nu(t)k}=e^{\nu(t) (k-k_1)}e^{\nu(t) (k_1-k_2)}e^{\nu(t) k_2}$ to get
\begin{multline*}
e^{\nu(t)k}|\widehat{\bm{\mathcal{N}}_{1,1}}(k)|\leq\\
\frac{\sqrt{2}}{4R}\!\sum_{k_1\in\mathbb{Z}}\sum_{k_2\in\mathbb{Z}}e^{\nu(t) (k-k_1)}\delta_{1,-1}(k-k_1)e^{\nu(t) (k_1-k_2)}|\widehat{\p_\theta\bm{X}}(k_1\!-\!k_2)|e^{\nu(t) k_2}|\widehat{\bm{F}_L}(k_2)|,
\end{multline*}
so Young's inequality for convolutions and the estimate \eqref{radiusbound} yield the bound 
\begin{equation}\label{N11bound}
\|\bm{\mathcal{N}}_{1,1}\|_{\fzeronenu}\leq\frac{e^{\nu_m }\sqrt{2}\|\bm{X}\|_{\foneonenu}}{2\sqrt{1-\frac12\|\bm{X}\|_{\foneonenu}^2}}\|\bm{F}_L\|_{\fzeronenu}.
\end{equation}
This is our desired estimate for $\bm{\mathcal{N}}_{1,1}$.

We now proceed to estimate $\bm{\mathcal{N}}_{1,2}$ as follows
\begin{equation}\label{N12term}
\bm{\mathcal{N}}_{1,2}(\theta)
=
\frac{-1}{4\pi R^2}\int_\mathbb{S}\Delta_{\theta-\eta} \bm{X}_c(\theta)^T
\derivdiff(\bm{X})(\theta,\eta)
\bm{F}_L({\theta-\eta})d\eta,
\end{equation}
whose Fourier transform is given by
\begin{equation*}
\begin{aligned}
\widehat{\bm{\mathcal{N}}_{1,2}}(k)\!=\!\frac{-1}{4\pi R^2}\!\!\int_\mathbb{S}\!\sum_{k_1\in\mathbb{Z}}\sum_{k_2\in\mathbb{Z}}\!\!\widehat{\Delta_{\theta-\eta} \bm{X}_c}
(k_1\!-\!k_2)^T\!
\widehat{\derivdiff(\bm{X})}
(k\!-\!k_1)
e^{\!-ik_2\eta}\widehat{\bm{F}_L}(k_2)d\eta.
\end{aligned}
\end{equation*}
 Using again \eqref{multiplier} and \eqref{multiplierSecond}, we can write it as follows
\begin{equation*}
\begin{aligned}
\widehat{\bm{\mathcal{N}}_{1,2}}(k)&=\frac{-1}{4\pi R^2}\sum_{k_1\in\mathbb{Z}}\sum_{k_2\in\mathbb{Z}}\widehat{\p_\theta \bm{X}_c}(k_1-k_2)^T
\widehat{\p_\theta\bm{X}}(k-k_1)
\widehat{\bm{F}_L}(k_2)I_1(k,k_1,k_2),
\end{aligned}
\end{equation*}
with $I_1$ given by \eqref{In.Integral}.  Using Lemma \ref{lemmaI}, we find that
\begin{equation*}
\begin{aligned}
|\widehat{\bm{\mathcal{N}}_{1,2}}(k)|&\leq\frac{1}{2 R^2}\sum_{k_1\in\mathbb{Z}}\sum_{k_2\in\mathbb{Z}}|\widehat{\p_\theta \bm{X}_c}(k-k_1)^T\widehat{\p_\theta\bm{X}}(k_1-k_2)||\widehat{\bm{F}_L}(k_2)|,
\end{aligned}
\end{equation*}
so following the steps after \eqref{N11f} we conclude that 
\begin{equation*}
\|\bm{\mathcal{N}}_{1,2}\|_{\fzeronenu}\leq\frac{e^{\nu_m }\sqrt{2}\|\bm{X}\|_{\foneonenu}}{2\sqrt{1-\frac12\|\bm{X}\|_{\foneonenu}^2}}\|\bm{F}_L\|_{\fzeronenu}.
\end{equation*}
This completes our bound for $\bm{\mathcal{N}}_{1,2}$.

The term $\bm{\mathcal{N}}_{1,3}$ is given by
\begin{multline}\label{N13}
\bm{\mathcal{N}}_{\!1,3}(\theta)
\\
=\frac{-1}{ R^4}\!\!\int_\mathbb{S}\!\!
\derivdiff(\bm{X}_c)(\theta,\eta)^T\Delta_{\theta-\eta} \bm{X}(\theta)\Delta_{\theta-\eta} \bm{X}_c(\theta)\otimes \Delta_{\theta-\eta} \bm{X}_c(\theta)\bm{F}_L({\theta\!-\!\eta})\frac{d\eta}{2\pi},
\end{multline}
and its Fourier transform by 
\begin{multline*}
\widehat{\bm{\mathcal{N}}_{1,3}}(k)=\frac{-1}{2\pi R^4}\!\sum_{k_1\in\mathbb{Z}}\dots\sum_{k_4\in\mathbb{Z}}\!\!\widehat{\p_\theta \bm{X}_c}(k-k_1)^T\widehat{\p_\theta\bm{X}}(k_1-k_2)\\
\widehat{\p_\theta \bm{X}_c}(k_2-k_3)\otimes \widehat{\p_\theta \bm{X}_c}(k_3-k_4)
\widehat{\bm{F}_L}(k_4)I_2(k,k_1,\dots,k_4),
\end{multline*}
with $I_2(k,k_1,\dots,k_4)$ given by \eqref{In.Integral}. Since $|I_2(k,\dots,k_4)|\leq 2\pi$ from Lemma \ref{lemmaI}, we have that
\begin{multline}\label{N13aux}
|\widehat{\bm{\mathcal{N}}_{1,3}}(k)|\leq \frac{1}{ R^4}\!\sum_{k_1\in\mathbb{Z}}\dots\sum_{k_4\in\mathbb{Z}}\!\!|\widehat{\p_\theta \bm{X}_c}(k-k_1)^T\widehat{\p_\theta\bm{X}}(k_1-k_2)|\\
\|\widehat{\p_\theta \bm{X}_c}(k_2-k_3)\otimes \widehat{\p_\theta \bm{X}_c}(k_3-k_4)\|
|\widehat{\bm{F}_L}(k_4)|.
\end{multline}
Expression \eqref{circlefourier} gives that
\begin{multline*}
\widehat{\p_\theta \bm{X}_c}(k_2-k_3)\otimes \widehat{\p_\theta \bm{X}_c}(k_3-k_4)=\frac{(a+ib)^2}{4}\delta_1(k_2-k_3)\delta_1(k_3-k_4)\begin{bmatrix}
-1&i\\
i&1
\end{bmatrix}\\+\frac{(a-ib)^2}{4}\delta_{-1}(k_2-k_3)\delta_{-1}(k_3-k_4)\begin{bmatrix}
-1&-i\\
-i&1
\end{bmatrix}\\-\frac{(a+ib)(a-ib)}{4}\delta_1(k_2-k_3)\delta_{-1}(k_3-k_4)\begin{bmatrix}
-1&-i\\
i&-1
\end{bmatrix}\\-\frac{(a+ib)(a-ib)}{4}\delta_{-1}(k_2-k_3)\delta_{1}(k_3-k_4)\begin{bmatrix}
-1&i\\
-i&-1
\end{bmatrix}.
\end{multline*}
All the matrices above have norm equal to $2$, so that
\begin{multline}\label{pXcopXc}
\|\widehat{\p_\theta \bm{X}_c}(k_2-k_3)\otimes \widehat{\p_\theta \bm{X}_c}(k_3-k_4)\|\leq \frac{R^2}{2}\delta_{1,-1}(k_2-k_3)\delta_{1,-1}(k_3-k_4).
\end{multline}
Introducing this bound, together with \eqref{pXcpX}, back to \eqref{N13aux}, we find that
\begin{multline*}
|\widehat{\bm{\mathcal{N}}_{1,3}}(k)| \leq \\\frac{\sqrt{2}}{ 4R}\!\sum_{k_1\in\mathbb{Z}}\!\!\!\dots\!\!\sum_{k_4\in\mathbb{Z}}
\delta_{1,-1}(k-k_1)|\widehat{\p_\theta\bm{X}}(k_1-k_2)|
\delta_{1,-1}(k_2-k_3)\delta_{1,-1}(k_3-k_4)
|\widehat{\bm{F}_L}(k_4)|,
\end{multline*}
thus multiplication by the exponential $e^{\nu(t)k}$, Young's inequality and \eqref{radiusbound} yield that
\begin{equation*}
\|\bm{\mathcal{N}}_{1,3}\|_{\fzeronenu}\leq\frac{2\sqrt{2}e^{3\nu_m }\|\bm{X}\|_{\foneonenu}}{\sqrt{1-\frac12\|\bm{X}\|_{\foneonenu}^2}}\|\bm{F}_L\|_{\fzeronenu}.
\end{equation*}
This completes our bound for $\bm{\mathcal{N}}_{1,3}$.

The term $\bm{\mathcal{N}}_{1,4}$ is given by
\begin{multline}\label{N14}
	\bm{\mathcal{N}}_{1,4}(\theta)
	\\
	\!=\!\frac{-1}{ R^4}\!\!\int_\mathbb{S}\!\!\Delta_{\theta-\eta} \bm{X}_c(\theta)^T\!
	\derivdiff(\bm{X})(\theta,\eta)
	\Delta_{\theta-\eta} \bm{X}_c(\theta)\otimes \Delta_{\theta-\eta} \bm{X}_c(\theta)\bm{F}_L({\theta\!-\!\eta})\frac{d\eta}{2\pi}.
\end{multline}
We take the Fourier transform and write the result as
\begin{multline*}
\widehat{\bm{\mathcal{N}}_{1,4}}(k)=
\frac{-1}{2\pi R^4}\!
\sum_{k_1\in\mathbb{Z}}\dots\sum_{k_4\in\mathbb{Z}}\!\!
\widehat{\p_\theta \bm{X}_c}(k_1-k_2)^T
\widehat{\p_\theta\bm{X}}(k-k_1)
\\
\widehat{\p_\theta \bm{X}_c}(k_2-k_3)\otimes \widehat{\p_\theta \bm{X}_c}(k_3-k_4)
\widehat{\bm{F}_L}(k_4)I_2(k,k_1,\dots,k_4),
\end{multline*}
with $I_2(k,k_1,\dots,k_4)$ given by \eqref{In.Integral}.  Since $|I_2|\leq 2\pi$ by Lemma \ref{lemmaI}, 
comparing now with \eqref{N13aux}, we conclude that 
\begin{equation*}
\|\bm{\mathcal{N}}_{1,4}\|_{\fzeronenu}\leq\frac{2\sqrt{2}e^{3\nu_m }\|\bm{X}\|_{\foneonenu}}{\sqrt{1-\frac12\|\bm{X}\|_{\foneonenu}^2}}\|\bm{F}_L\|_{\fzeronenu}.
\end{equation*}
This completes our estimate for $\bm{\mathcal{N}}_{1,4}$.

The remaining terms from $\mathcal{N}_1(\theta)$ in \eqref{N1split} are
\begin{equation*}
\bm{\mathcal{N}}_{\!1,5}(\theta)\!=\!\frac{-1}{ R^4}\!\!\int_\mathbb{S}\!\!\Delta_{\theta-\eta} \bm{X}_c(\theta)^T\!\Delta_{\theta-\eta} \bm{X}(\theta)
\derivdiff(\bm{X}_c)(\theta,\eta)
\otimes \Delta_{\theta-\eta} \bm{X}_c(\theta)\bm{F}_L({\theta\!-\!\eta})\frac{d\eta}{2\pi},
\end{equation*}
\begin{equation*}
\bm{\mathcal{N}}_{\!1,6}(\theta)\!=\!\frac{-1}{ R^4}\!\!\int_\mathbb{S}\!\!\Delta_{\theta-\eta} \bm{X}_c(\theta)^T\!\Delta_{\theta-\eta} \bm{X}(\theta)\Delta_{\theta-\eta} \bm{X}_c(\theta)\otimes 
\derivdiff(\bm{X}_c)(\theta,\eta)
\bm{F}_L({\theta\!-\!\eta})\frac{d\eta}{2\pi},
\end{equation*}
\begin{equation*}
\bm{\mathcal{N}}_{1,7}(\theta)\!=\!\frac{1}{4\pi R^2}\int_\mathbb{S}\!
\derivdiff(\bm{X}_c)(\theta,\eta)
\otimes\Delta_{\theta-\eta} \bm{X}(\theta)\bm{F}_L({\theta\!-\!\eta})d\eta,
\end{equation*}
\begin{equation}\label{N18etc}
\bm{\mathcal{N}}_{1,8}(\theta)=\frac{1}{4\pi R^2}\int_\mathbb{S}\!\Delta_{\theta-\eta}\bm{X}_c(\theta)\otimes 
\derivdiff(\bm{X})(\theta,\eta)
\bm{F}_L({\theta\!-\!\eta})d\eta,
\end{equation}
\begin{equation*}
\bm{\mathcal{N}}_{1,9}(\theta)=\frac{1}{4\pi R^2}\int_\mathbb{S}\!
\derivdiff(\bm{X})(\theta,\eta)
\otimes\Delta_{\theta-\eta} \bm{X}_c(\theta)\bm{F}_L({\theta\!-\!\eta})d\eta,
\end{equation*}
\begin{equation*}
\bm{\mathcal{N}}_{1,10}(\theta)=\frac{1}{4\pi R^2}\int_\mathbb{S}\!\Delta_{\theta-\eta}\bm{X}(\theta)\otimes
\derivdiff(\bm{X}_c)(\theta,\eta)
\bm{F}_L({\theta\!-\!\eta})d\eta.
\end{equation*}
It is not hard to see that $\bm{\mathcal{N}}_{1,5}$ and $\bm{\mathcal{N}}_{1,6}$ are bounded exactly as $\bm{\mathcal{N}}_{1,3}$ in \eqref{N13}, since the bound \eqref{pXcopXc} is also valid for $\derivdiff(\bm{X}_c)(\theta,\eta)\otimes \Delta_{\theta-\eta} \bm{X}_c(\theta)$ or $\Delta_{\theta-\eta} \bm{X}_c(\theta)\otimes \derivdiff(\bm{X}_c)(\theta,\eta)$.

We proceed then with $\bm{\mathcal{N}}_{1,7}$. Comparing with  $\bm{\mathcal{N}}_{1,1}$ in \eqref{N11}, \eqref{N11f}, we obtain that
\begin{equation*}
|\widehat{\bm{\mathcal{N}}_{1,7}}(k)|\leq\frac{1}{2 R^2}\sum_{k_1\in\mathbb{Z}}\sum_{k_2\in\mathbb{Z}}\|\widehat{\p_\theta \bm{X}_c}(k-k_1)\otimes\widehat{\p_\theta\bm{X}}(k_1-k_2)\||\widehat{\bm{F}_L}(k_2)|.
\end{equation*}
Using \eqref{circlefourier}, we find that
\begin{multline}\label{aux3}
\|\widehat{\p_\theta \bm{X}_c}(k-k_1)\otimes\widehat{\p_\theta\bm{X}}(k_1-k_2)\|
\\
\leq \frac{R}2\delta_{1}(k-k_1)\|\begin{bmatrix}
-1\\i
\end{bmatrix}\widehat{\p_\theta\bm{X}}(k_1-k_2)\|
\\
+\frac{R}2\delta_{-1}(k-k_1)\|\begin{bmatrix}
-1\\-i
\end{bmatrix}\widehat{\p_\theta\bm{X}}(k_1-k_2)\|\Big)
\\
\leq \frac{\sqrt{2}}{2}R\delta_{1,-1}(k-k_1)|\widehat{\p_\theta\bm{X}}(k_1-k_2)|,
\end{multline}
where in the last inequality we have used that the matrix norm \eqref{matrixnorm} is bounded by the Frobenius norm.
Therefore we conclude that
\begin{equation*}
\|\bm{\mathcal{N}}_{1,7}\|_{\fzeronenu}\leq\frac{\sqrt{2}e^{\nu_m }\|\bm{X}\|_{\foneonenu}}{2\sqrt{1-\frac12\|\bm{X}\|_{\foneonenu}^2}}\|\bm{F}_L\|_{\fzeronenu}.
\end{equation*}
The bound for $\bm{\mathcal{N}}_{1,8}$ follows in the same way as that of $\bm{\mathcal{N}}_{1,7}$, 
\begin{equation*}
\|\bm{\mathcal{N}}_{1,8}\|_{\fzeronenu}\leq\frac{\sqrt{2}e^{\nu_m }\|\bm{X}\|_{\foneonenu}}{2\sqrt{1-\frac12\|\bm{X}\|_{\foneonenu}^2}}\|\bm{F}_L\|_{\fzeronenu}.
\end{equation*}
Finally, the bounds for $\bm{\mathcal{N}}_{1,9}$ and $\bm{\mathcal{N}}_{1,10}$ are the same as for $\bm{\mathcal{N}}_{1,7}$ and $\bm{\mathcal{N}}_{1,8}$ because
\begin{multline}\label{aux4}
\|\widehat{\p_\theta \bm{X}}(k-k_1)\otimes\widehat{\p_\theta\bm{X}_c}(k_1-k_2)\|
\\
\leq \frac{R}2\delta_{1}(k_1-k_2)\|\widehat{\p_\theta\bm{X}}(k-k_1)\begin{bmatrix}
-1&i
\end{bmatrix}\|
\\
+\frac{R}2\delta_{-1}(k_1\!-\!k_2)\|\widehat{\p_\theta\bm{X}}(k\!-\!k_1)\begin{bmatrix}
-1\!&\!-i
\end{bmatrix}\|
\\
\leq \frac{\sqrt{2}}2R\delta_{1,-1}(k_1-k_2)|\widehat{\p_\theta\bm{X}}(k-k_1)|.
\end{multline}
Joining the bounds for $\bm{\mathcal{N}}_{1,1}$ to $\bm{\mathcal{N}}_{1,10}$, we obtain the bound for $\bm{\mathcal{N}}_{1}$ in \eqref{N1split} as
\begin{equation}\label{N1}
\|\bm{\mathcal{N}}_1\|_{\fzeronenu}\leq 11\sqrt{2}e^{3\nu_m }C_1 \|\bm{X}\|_{\foneonenu}\|\bm{F}_L\|_{\fzeronenu}, 
\end{equation}
where $C_1$ is defined in \eqref{C1}.  This completes our estimates for the $\bm{\mathcal{N}}_{1}$ term.
\\

\noindent\underline{$\bm{\mathcal{N}}_3$ estimates:} Taking a derivative in \eqref{GN}, we split $\bm{\mathcal{N}}_3$ as follows
\begin{equation}\label{N3split}
\bm{\mathcal{N}}_3(\theta)=\sum_{i=1}^{11}\bm{\mathcal{N}}_{3,i},
\end{equation}
where
\begin{equation*}
\bm{\mathcal{N}}_{3,1}(\theta)=\frac{-1}{4\pi}\int_\mathbb{S}\p_\theta\mathcal{R}_1(\Delta_\eta \bm{X}(\theta))\bm{F}_L(\eta)d\eta,
\end{equation*}
\begin{equation*}
\bm{\mathcal{N}}_{3,2}(\theta)\!=\!\frac{1}{4\pi R^2}\!\!\int_\mathbb{S}\!\!\p_\theta\Big(\!\Delta_\eta \bm{X}(\theta)\!\otimes\!\Delta_\eta \bm{X}(\theta)\!\Big)\Big(\!1\!-\!\frac2{R^2}\Delta_\eta \bm{X}_c(\theta)^T\! \Delta_\eta \bm{X}(\theta)\!\Big)\bm{F}_L(\eta)d\eta,
\end{equation*}
\begin{equation*}
\bm{\mathcal{N}}_{3,3}(\theta)=\frac{-1}{2\pi R^4}\int_\mathbb{S}\Delta_\eta \bm{X}(\theta)\otimes\Delta_\eta \bm{X}(\theta)\p_\theta\Big(\Delta_\eta \bm{X}_c(\theta)^T \Delta_\eta \bm{X}(\theta)\Big)\bm{F}_L(\eta)d\eta,
\end{equation*}
\begin{equation*}
\bm{\mathcal{N}}_{3,4}(\theta)=\frac{1}{4\pi R^2}\int_\mathbb{S}\p_\theta\Big(\Delta_\eta \bm{X}(\theta)\otimes\Delta_\eta \bm{X}(\theta)\Big)\mathcal{R}_2(\Delta_\eta \bm{X}(\theta))\bm{F}_L(\eta)d\eta,
\end{equation*}
\begin{equation*}
\bm{\mathcal{N}}_{3,5}(\theta)=\frac{1}{4\pi R^2}\int_\mathbb{S}\Delta_\eta \bm{X}(\theta)\otimes\Delta_\eta \bm{X}(\theta)\p_\theta\mathcal{R}_2(\Delta_\eta \bm{X}(\theta))\bm{F}_L(\eta)d\eta,
\end{equation*}
\begin{equation*}
\begin{aligned}
\bm{\mathcal{N}}_{3,6}(\theta)&=\frac{-1}{2\pi R^4}\int_\mathbb{S}\p_\theta\Big(\Delta_\eta \bm{X}_c(\theta)\otimes\Delta_\eta \bm{X}(\theta)\\
&\hspace{2.5cm}+\!\Delta_\eta \bm{X}(\theta)\!\otimes\!\Delta_\eta \bm{X}_c(\theta)\!\Big)\Delta_\eta \bm{X}_c(\theta)^T\! \Delta_\eta \bm{X}(\theta)\bm{F}_L(\eta)d\eta,
\end{aligned}
\end{equation*}
\begin{equation*}
\begin{aligned}
\bm{\mathcal{N}}_{3,7}(\theta)&=\frac{-1}{2\pi R^4}\int_\mathbb{S}\Big(\Delta_\eta \bm{X}_c(\theta)\otimes\Delta_\eta \bm{X}(\theta)\\
&\hspace{2.1cm}\!+\!\Delta_\eta \bm{X}(\theta)\!\otimes\!\Delta_\eta \bm{X}_c(\theta)\!\Big)\p_\theta\big(\Delta_\eta \bm{X}_c(\theta)^T\! \Delta_\eta \bm{X}(\theta)\big)\bm{F}_L(\eta)d\eta,
\end{aligned}
\end{equation*}
\begin{equation*}
\begin{aligned}
\bm{\mathcal{N}}_{3,8}(\theta)&=\frac{1}{4\pi R^2}\int_\mathbb{S}\p_\theta\Big(\Delta_\eta \bm{X}_c(\theta)\otimes\Delta_\eta \bm{X}(\theta)\\
&\hspace{2.5cm}+\Delta_\eta \bm{X}(\theta)\otimes\Delta_\eta \bm{X}_c(\theta)\Big)\mathcal{R}_2(\Delta_\eta \bm{X}(\theta)) \bm{F}_L(\eta)d\eta,
\end{aligned}
\end{equation*}
\begin{equation*}
\begin{aligned}
\bm{\mathcal{N}}_{3,9}(\theta)&=\frac{1}{4\pi R^2}\int_\mathbb{S}\Big(\Delta_\eta \bm{X}_c(\theta)\otimes\Delta_\eta \bm{X}(\theta)\\
&\hspace{2.5cm}+\Delta_\eta \bm{X}(\theta)\otimes\Delta_\eta \bm{X}_c(\theta)\Big)\p_\theta\mathcal{R}_2(\Delta_\eta \bm{X}(\theta))\bm{F}_L(\eta) d\eta,
\end{aligned}
\end{equation*}
\begin{equation*}
\begin{aligned}
\bm{\mathcal{N}}_{3,10}(\theta)&=\frac{1}{4\pi R^2}\int_\mathbb{S}\p_\theta\Big(\Delta_\eta \bm{X}_c(\theta)\otimes \Delta_\eta \bm{X}_c(\theta) \Big)\mathcal{R}_2(\Delta_\eta \bm{X}(\theta)) \bm{F}_L(\eta)d\eta,\\
\bm{\mathcal{N}}_{3,11}(\theta)&=\frac{1}{4\pi R^2}\int_\mathbb{S}\Delta_\eta \bm{X}_c(\theta)\otimes \Delta_\eta \bm{X}_c(\theta) \p_\theta\mathcal{R}_2(\Delta_\eta \bm{X}(\theta)) \bm{F}_L(\eta)d\eta,
\end{aligned}
\end{equation*}
where $\mathcal{R}_1$ and $\mathcal{R}_2$ were defined in \eqref{R1} and \eqref{R2}.

We proceed with $\bm{\mathcal{N}}_{3,1}$ first. We take the derivative in \eqref{R1} to obtain that
\begin{equation}\label{N31split}
\bm{\mathcal{N}}_{3,1}(\theta)=\bm{O}_1(\theta)+\bm{O}_2(\theta)+\bm{O}_3(\theta),
\end{equation}
where
\begin{multline*}
\bm{O}_1(\theta)=\frac{-1}{8\pi}\int_\mathbb{S}
\sum_{n\geq1}\!\sum_{\substack{m=0 \\ n+m\geq 2}}^{n-1}\!\!\!\!\begin{pmatrix}
n\\m
\end{pmatrix} \!\!\frac{(-1)^{n-1}(n\!-\!m)}{nR^{2n}}
(2\Delta_\eta \bm{X}_c(\theta)^T \Delta_\eta \bm{X}(\theta))^{n-m-1}\\2\p_\theta\Delta_\eta \bm{X}_c(\theta)^T \Delta_\eta \bm{X}(\theta)|\Delta_\eta \bm{X}(\theta)|^{2m}\bm{F}_L(\eta)d\eta,
\end{multline*}
\begin{multline*}
\bm{O}_2(\theta)=\frac{-1}{8\pi}\int_\mathbb{S}
\sum_{n\geq1}\!\sum_{\substack{m=0 \\ n+m\geq 2}}^{n-1}\!\!\!\!\begin{pmatrix}
n\\m
\end{pmatrix} \!\!\frac{(-1)^{n-1}(n\!-\!m)}{nR^{2n}}
(2\Delta_\eta \bm{X}_c(\theta)^T \Delta_\eta \bm{X}(\theta))^{n-m-1}\\
2\Delta_\eta \bm{X}_c(\theta)^T \p_\theta\Delta_\eta \bm{X}(\theta)|\Delta_\eta \bm{X}(\theta)|^{2m}\bm{F}_L(\eta)d\eta,
\end{multline*}
\begin{multline*}
\bm{O}_3(\theta)=\frac{-1}{4\pi}\int_\mathbb{S}
\sum_{n\geq1}\!\sum_{m=1}^n\begin{pmatrix}
n\\m
\end{pmatrix} \!\!\frac{(-1)^{n-1}m}{nR^{2n}}
(2\Delta_\eta \bm{X}_c(\theta)^T \Delta_\eta \bm{X}(\theta))^{n-m}\\
|\Delta_\eta \bm{X}(\theta)|^{2(m-1)}\Delta_\eta \bm{X}(\theta)^T\p_\theta \Delta_\eta \bm{X}(\theta)\bm{F}_L(\eta)d\eta.
\end{multline*}
After performing the change of variables $\eta\leftarrow \theta-\eta$, we take Fourier transform of $\bm{O}_1(\theta)$ to obtain
\begin{multline*}
\widehat{\bm{O}_1}(k)=\frac{-1}{8\pi}\int_\mathbb{S}
\sum_{n\geq1}\!\sum_{\substack{m=0 \\ n+m\geq 2}}^{n-1}\!\!\!\begin{pmatrix}
n\\m
\end{pmatrix} \!\!\frac{(-1)^{n-1}(n\!-\!m)}{nR^{2n}}
*^{n-m-1}\widehat{2\Delta_{\theta-\eta} \bm{X}_c(\theta)^T\Delta_{\theta-\eta} \bm{X}(\theta)}\\
*2\widehat{\derivdiff(\bm{X}_c)(\theta)^T \Delta_{\theta-\eta} \bm{X}}(k)*^m\widehat{ \Delta_{\theta-\eta} \bm{X}(\theta)^T \Delta_{\theta-\eta} \bm{X}(\theta)}*\widehat{\bm{F}_L}(k)d\eta.
\end{multline*}
Using \eqref{multiplier} and \eqref{multiplierSecond}, we rewrite it as follows
\begin{multline}\label{O1aux}
\widehat{\bm{O}_1}(k)
\\
\!=\!\frac{-1}{8\pi}
\sum_{n\geq1}\!\!\sum_{\substack{m=0 \\ n+m\geq 2}}^{n-1}\!\!\!\!\begin{pmatrix}
n\\m
\end{pmatrix} \!\!\frac{(-1)^{n-1}(n\!-\!m)}{nR^{2n}}\sum_{k_1}\!\dots\!\!\sum_{k_{k_{2n}}}\!\!\!\prod_{j=0}^{n-m-2}
\!\!\!2\widehat{\p_\theta\bm{X}_c}(k_{2j+1}-k_{2j+2})^T\\
\widehat{\p_\theta\bm{X}}(k_{2j+2} - k_{2j+3})2
\widehat{\p_\theta\bm{X}_c}(k - k_1)^T
\widehat{\p_\theta\bm{X}}(k_{2n-2m-1} - k_{2n-2m})\\
\prod_{j=n-m}^{n-1}\widehat{\p_\theta\bm{X}}(k_{2j}-k_{2j+1})^T\widehat{\p_\theta\bm{X}}(k_{2j+1}-k_{2j+2})\widehat{\bm{F}_L}(k_{2n})I_n(k,k_1,\dots,k_{2n}),
\end{multline}
with $\left| I_n(k,k_1,\dots,k_{2n})\right| \le 2\pi$ given by \eqref{In.Integral} and using Lemma \ref{lemmaI}.  Above we are using the convention that $\prod_{j=j_1}^{j_2}f(j)\equiv 1$ if $j_2<j_1$.
Recalling estimate \eqref{pXcpX}, distributing the exponential factor $e^{\nu(t) k}$, and applying Young's inequality, we have that 
\begin{multline*}
\|\bm{O}_1\|_{\fzeronenu}
\\
\leq\!\frac{1}{4}\Bigg(\!
\sum_{n\geq1}\!\!\!\sum_{\substack{m=0 \\ n+m\geq 2}}^{n-1}\!\!\!\!\!\begin{pmatrix}
n\\m
\end{pmatrix} \!\!\frac{(n\!-\!m)}{n R^{2n}}(2\sqrt{2})^{n-m}e^{(n-m)\nuTIME }R^{n-m}\|\bm{X}\|_{\foneonenu}^{n-m}\|\bm{X}\|_{\foneonenu}^{2m}\!\!\Bigg)\|\bm{F}_L\|_{\fzeronenu},
\end{multline*}
which can be summed first in $m$ to get
\begin{equation}\label{O1n}
\|\bm{O}_1\|_{\fzeronenu}\leq\frac{1}{4}
\sum_{n\geq2}(2\sqrt{2})^n e^{n\nuTIME }\frac{\|\bm{X}\|_{\foneonenu}^n}{R^n}\Big(1+\frac{\|\bm{X}\|_{\foneonenu}}{2\sqrt{2}e^{\nuTIME }R}\Big)^{n-1} \|\bm{F}_L\|_{\fzeronenu},
\end{equation}
and then sum in in $n$,
\begin{equation*}
\|\bm{O}_1\|_{\fzeronenu}\leq
\frac{2e^{2\nu_m }\Big(1+\frac{\|\bm{X}\|_{\foneonenu}}{2\sqrt{2}e^{\nu_m }R}\Big)}{1-2\sqrt{2}e^{\nu_m }\frac{\|\bm{X}\|_{\foneonenu}}{R}\Big(1+\frac{\|\bm{X}\|_{\foneonenu}}{2\sqrt{2}e^{\nu_m }R}\Big)} \frac{\|\bm{X}\|_{\foneonenu}^2}{ R^2}\|\bm{F}_L\|_{\fzeronenu}.
\end{equation*}
Using estimate \eqref{radiusbound} and the notation \eqref{C1}, we conclude that
\begin{equation}\label{O1bound}
\|\bm{O}_1\|_{\fzeronenu}\leq 2 e^{2\nu_m }C_2C_1^2
\|\bm{X}\|_{\foneonenu}^2\|\bm{F}_L\|_{\fzeronenu},
\end{equation}
with
\begin{equation}\label{C2}
C_2=\frac{1+\frac1{2\sqrt{2}}e^{-\nu_m }C_1\|\bm{X}\|_{\foneonenu}}{1-2\sqrt{2}e^{\nu_m }C_1\|\bm{X}\|_{\foneonenu}\Big(1+\frac1{2\sqrt{2}}e^{-\nu_m }C_1\|\bm{X}\|_{\foneonenu}\Big)},
\end{equation}
where $C_1$ was defined in \eqref{C1}.

We proceed with $\bm{O}_2$ in \eqref{N31split}. We take Fourier transform and, recalling \eqref{multiplier}, we obtain that
\begin{multline}\label{O2aux}
\widehat{\bm{O}_2}(k)
\\
\!=\!
\frac{-1}{8\pi}
\sum_{n\geq1}\!\!\sum_{\substack{m=0 \\ n+m\geq 2}}^{n-1}\!\!\!\!\!\begin{pmatrix}
n\\m
\end{pmatrix} \!\!\frac{(-1)^{n-1}(n\!-\!m)}{n R^{2n}}\sum_{k_1}\!\dots\!\!\sum_{k_{k_{2n}}}\!\!\!\prod_{j=0}^{n-m-2}\!\!\!
2\widehat{\p_\theta\bm{X}_c}(k_{2j+1}-k_{2j+2})^T\\
\widehat{\p_\theta\bm{X}}(k_{2j+2}-k_{2j+3})
2\widehat{\p_\theta\bm{X}_c}(k_{2n-2m-1}-k_{2n-2m})^T
\widehat{\p_\theta\bm{X}}(k-k_1)
\\
\prod_{j=n-m}^{n-1}\widehat{\p_\theta\bm{X}}(k_{2j}-k_{2j+1})^T\widehat{\p_\theta\bm{X}}(k_{2j+1}-k_{2j+2})\widehat{\bm{F}_L}(k_{2n})I_n(k,k_1,\dots,k_{2n}),
\end{multline}
again with $\left| I_n(k,k_1,\dots,k_{2n})\right| \le 2\pi$ from \eqref{In.Integral} and Lemma \ref{lemmaI}. 
Thus, comparing \eqref{O2aux} with \eqref{O1aux}, we find the estimate for $\bm{O}_2$,
\begin{equation}\label{O2bound}
\|\bm{O}_2\|_{\fzeronenu}\leq 2e^{2\nu_m }C_2C_1^2
\|\bm{X}\|_{\foneonenu}^2\|\bm{F}_L\|_{\fzeronenu},
\end{equation}
with $C_2$ defined in \eqref{C2} and $C_1$ in \eqref{C1}.

Repeating these steps for $\bm{O}_3$, we obtain that
\begin{multline*}
\!\|\bm{O}_3\|_{\fzeronenu}
\\
\!\leq\!\frac{1}{2}
\sum_{n\geq1}\!\sum_{m=1}^{n}\!\!\begin{pmatrix}
n\\m
\end{pmatrix} \!\!\frac{m(2\sqrt{2})^{n-m}}{n R^{2n}}e^{\nuTIME (n-m)}R^{n-m}\|\bm{X}\|_{\foneonenu}^{n-m}\|\bm{X}\|_{\foneonenu}^{2(m-1)}\|\bm{X}\|_{\foneonenu}^{2}\|\bm{F}_L\|_{\fzeronenu},
\end{multline*}
which after summation in $m$ the right side above becomes
\begin{multline}\label{O3n}
\|\bm{O}_3\|_{\fzeronenu}
\\
\leq\frac12
\sum_{n\geq1}\frac{1}{n R^{2n}}n(2\sqrt{2}e^{\nuTIME }R\|\bm{X}\|_{\foneonenu}+\|\bm{X}\|_{\foneonenu}^2)^{n-1}\|\bm{X}\|_{\foneonenu}^{2}\|\bm{F}_L\|_{\fzeronenu}
\\
=\frac12
\sum_{n\geq1}\frac{\|\bm{X}\|_{\foneonenu}^{n-1}}{R^{n-1}}\Big(2\sqrt{2}e^{\nuTIME }+\frac{\|\bm{X}\|_{\foneonenu}}{R}\Big)^{n-1}\frac{\|\bm{X}\|_{\foneonenu}^{2}}{R^2}\|\bm{F}_L\|_{\fzeronenu},
\end{multline}
and after summation in $n$ we have
\begin{equation*}
\begin{aligned}
\|\bm{O}_3\|_{\fzeronenu}\leq\frac12\frac{1}{1-2\sqrt{2}e^{\nu_m }\frac{\|\bm{X}\|_{\foneonenu}}{R}\Big(1+e^{-\nu_m }\frac{\|\bm{X}\|_{\foneonenu}}{2\sqrt{2}R}\Big)}\frac{\|\bm{X}\|_{\foneonenu}^{2}}{R^2}\|\bm{F}_L\|_{\fzeronenu}.
\end{aligned}
\end{equation*}
Introducing the bound for $R$ in \eqref{radiusbound}, we obtain 
\begin{equation}\label{O3bound}
\|\bm{O}_3\|_{\fzeronenu}\leq \frac12C_3C_1^2
\|\bm{X}\|_{\foneonenu}^2\|\bm{F}_L\|_{\fzeronenu},
\end{equation}
and using $C_2$ in \eqref{C2} and $C_1$ in \eqref{C1} we have
\begin{equation}\label{C3}
C_3=\frac{C_2}{1+\frac1{2\sqrt{2}}e^{-\nu_m }C_1\|\bm{X}\|_{\foneonenu}}.
\end{equation}
Joining the bounds \eqref{O1bound}, \eqref{O2bound}, and \eqref{O3bound}, we find the estimate for $\bm{\mathcal{N}}_{3,1}$ from \eqref{N31split} as
\begin{equation}\label{N31bound}
\|\bm{\mathcal{N}}_{3,1}\|_{\fzeronenu}\leq \frac92C_4C_1^2
\|\bm{X}\|_{\foneonenu}^2\|\bm{F}_L\|_{\fzeronenu},
\end{equation}
with
\begin{equation}\label{C4}
C_4=\frac29\Big(4e^{2\nu_m}C_2+\frac{C_3}{2}\Big).
\end{equation}
This completes our desired estimate for $\bm{\mathcal{N}}_{3,1}$.

We continue with the next term $\bm{\mathcal{N}}_{3,2}$ from  \eqref{N3split}, which we split in two
\begin{equation*}
\bm{\mathcal{N}}_{3,2}(\theta)=\bm{O}_4(\theta)+\bm{O}_5(\theta),
\end{equation*}
\begin{equation*}
\bm{O}_{4}(\theta)=\frac{1}{4\pi R^2}\int_\mathbb{S}\p_\theta\Big(\Delta_\eta \bm{X}(\theta)\otimes\Delta_\eta \bm{X}(\theta)\Big)\bm{F}_L(\eta)d\eta,
\end{equation*}
\begin{equation*}
\bm{O}_{5}(\theta)=\frac{-1}{2\pi R^4}\int_\mathbb{S}\p_\theta\Big(\Delta_\eta \bm{X}(\theta)\otimes\Delta_\eta \bm{X}(\theta)\Big)\Delta_\eta \bm{X}_c(\theta)^T \Delta_\eta \bm{X}(\theta)\bm{F}_L(\eta)d\eta.
\end{equation*}
The bounds for these terms follows in a similar way to that of $\bm{\mathcal{N}}_{1,2}$ from \eqref{N12term} and $\bm{\mathcal{N}}_{1,4}$ from \eqref{N14}, respectively. Taking into account that
\begin{equation}\label{XotimesX}
\|\widehat{\p_\theta\bm{X}}(k-k_1)\otimes\widehat{\p_\theta\bm{X}}(k_1-k_2)\|\leq |\widehat{\p_\theta\bm{X}}(k-k_1)||\widehat{\p_\theta\bm{X}}(k_1-k_2)|,
\end{equation}
and Lemma \ref{lemmaI}, it is not hard to find that
\begin{equation*}
\begin{aligned}
|\widehat{\bm{O}_{4}}(k)|\leq\frac{1}{R^2}\sum_{k_1\in\mathbb{Z}}\sum_{k_2\in\mathbb{Z}}|\widehat{\p_\theta \bm{X}}(k-k_1)| |\widehat{\p_\theta\bm{X}}(k_1-k_2)||\widehat{\bm{F}_L}(k_2)|,
\end{aligned}
\end{equation*}
and recalling \eqref{pXcpX}, we have
\begin{multline*}
|\widehat{\bm{O}_{5}}(k)|\leq\frac{\sqrt{2}}{R^3}\sum_{k_1\in\mathbb{Z}}\dots\sum_{k_4\in\mathbb{Z}}|\widehat{\p_\theta \bm{X}}(k-k_1)||\widehat{\p_\theta\bm{X}}(k_1-k_2)|\delta_{1,-1}(k_2-k_3)\\
\times |\widehat{\p_\theta\bm{X}}(k_3-k_4)||\widehat{\bm{F}_L}(k_4)|.
\end{multline*}
Therefore, 
\begin{equation*}
\|\bm{O}_{4}\|_{\fzeronenu}\leq \frac{\|\bm{X}\|_{\foneonenu}^2}{R^2}\|\bm{F}_L\|_{\fzeronenu}, \hspace{0.3cm} \|\bm{O}_{5}\|_{\foneonenu}\leq 2\sqrt{2}e^{\nu_m }\frac{\|\bm{X}\|_{\foneonenu}^3}{R^3}\|\bm{F}_L\|_{\fzeronenu},
\end{equation*}
thus
\begin{equation*}
\|\bm{\mathcal{N}}_{3,2}\|_{\fzeronenu}\leq \Big(1+2\sqrt{2}e^{\nu_m }\frac{\|\bm{X}\|_{\foneonenu}}{R}\Big)\frac{\|\bm{X}\|_{\foneonenu}^2}{R^2}\|\bm{F}_L\|_{\fzeronenu},
\end{equation*}
so plugging in the estimate \eqref{radiusbound} yields that
\begin{equation}\label{N32bound}
\|\bm{\mathcal{N}}_{3,2}\|_{\fzeronenu}\leq C_5C_1^2\|\bm{X}\|_{\foneonenu}^2\|\bm{F}_L\|_{\fzeronenu},
\end{equation}
with
\begin{equation}\label{C5}
C_5=1+2\sqrt{2}e^{\nu_m }C_1\|\bm{X}\|_{\foneonenu}.
\end{equation}
This completes our estimate for $\bm{\mathcal{N}}_{3,2}$.

The Fourier transform of $\bm{\mathcal{N}}_{3,3}$ in \eqref{N3split}  can be bounded as follows
\begin{multline*}
|\widehat{\bm{\mathcal{N}}_{3,3}}(k)|\leq\frac{\sqrt{2}}{2 R^3}\sum_{k_1\in\mathbb{Z}}\!\dots\!\sum_{k_4\in\mathbb{Z}}|\widehat{\p_\theta \bm{X}}(k\!-\!k_1)||\widehat{\p_\theta\bm{X}}(k_1\!-\!k_2)|\delta_{1,-1}(k_2\!-\!k_3)\\
\times |\widehat{\p_\theta\bm{X}}(k_3-k_4)||\widehat{\bm{F}_L}(k_4)|,
\end{multline*}
and thus
\begin{equation*}
\|\bm{\mathcal{N}}_{3,3}\|_{\fzeronenu}\leq \sqrt{2}e^{\nu_m } \frac{\|\bm{X}\|_{\foneonenu}^3}{R^3} \|\bm{F}_L\|_{\fzeronenu},
\end{equation*}
which becomes
\begin{equation}\label{N33bound}
\|\bm{\mathcal{N}}_{3,3}\|_{\fzeronenu}\leq \sqrt{2}e^{\nu_m } C_1^3\|\bm{X}\|_{\foneonenu}^3\|\bm{F}_L\|_{\fzeronenu}.
\end{equation}
Similarly, recalling \eqref{R2}, the estimate for $\bm{\mathcal{N}}_{3,4}$ in \eqref{N3split} is
\begin{multline*}
\|\bm{\mathcal{N}}_{3,4}\|_{\fzeronenu}\!\leq\!
\\
\frac{\|\bm{X}\|_{\foneonenu}^2}{ R^2}\sum_{n\geq1}\!\!\sum_{\substack{m=0\\n+m\geq2}}^{n}\!\!\!\!\begin{pmatrix}
n\\m
\end{pmatrix}\frac{(2\sqrt{2})^{n-m}e^{(n-m)\nuTIME }R^{n-m}}{R^{2n}}\|\bm{X}\|_{\foneonenu}^{n-m}\|\bm{X}\|_{\foneonenu}^{2m}\|\bm{F}_L\|_{\fzeronenu},
\end{multline*}
which rewrites as
\begin{multline*}
\|\bm{\mathcal{N}}_{3,4}\|_{\fzeronenu}
\\
\!\leq\! \frac{\|\bm{X}\|_{\foneonenu}^2}{ R^2}\bigg(\sum_{n\geq1}\!\frac{(2\sqrt{2}e^{\nuTIME }R\|\bm{X}\|_{\foneonenu})^n}{R^{2n}}\Big(1+\frac{\|\bm{X}\|_{\foneonenu}}{2\sqrt{2}e^{\nuTIME }R}\Big)^{\!n}\!-\frac{2\sqrt{2}e^{\nuTIME }\|\bm{X}\|_{\foneonenu}}{R}\!\bigg)\|\bm{F}_L\|_{\fzeronenu}\\
=\frac{2\sqrt{2}e^{\nuTIME }\|\bm{X}\|_{\foneonenu}^3}{ R^3}\bigg(\sum_{n\geq1}\Big(\frac{2\sqrt{2}e^{\nuTIME }\|\bm{X}\|_{\foneonenu}}{R}\Big)^{\!n-1}\!\Big(1\!+\!\frac{\|\bm{X}\|_{\foneonenu}}{2\sqrt{2}e^{\nuTIME }R}\Big)^{\!n}\!\!\!-\!1\bigg)\|\bm{F}_L\|_{\fzeronenu}\\
\!=\!\frac{2\sqrt{2}e^{\nuTIME }\|\bm{X}\|_{\foneonenu}^3}{ R^3}\Bigg(\frac{\|\bm{X}\|_{\foneonenu}}{2\sqrt{2}e^{\nuTIME }R}\\
+\Big(1+\frac{\|\bm{X}\|_{\foneonenu}}{2\sqrt{2}e^{\nuTIME }R}\Big)
\sum_{n\geq2}\bigg(\frac{2\sqrt{2}e^{\nuTIME }\|\bm{X}\|_{\foneonenu}}{R}\Big(1+\frac{\|\bm{X}\|_{\foneonenu}}{2\sqrt{2}e^{\nuTIME }R}\Big)\bigg)^{n-1}\Bigg)\|\bm{F}_L\|_{\fzeronenu}.
\end{multline*}
Performing the sum in $n$ and 
using estimate \eqref{radiusbound}, we conclude that
\begin{equation}\label{N34bound}
\|\bm{\mathcal{N}}_{3,4}\|_{\fzeronenu}\leq 9C_6C_1^4
\|\bm{X}\|_{\foneonenu}^4\|\bm{F}_L\|_{\fzeronenu},
\end{equation}
with 
\begin{equation}\label{C6}
C_6=\frac19\Big(1+8e^{2\nu_m }\Big(1+\frac1{2\sqrt{2}}e^{-\nu_m }C_1\|\bm{X}\|_{\foneonenu}\Big)C_2\Big),
\end{equation}
where $C_1$, $C_2$ were defined in \eqref{C1}, \eqref{C2}.  This completes our estimate for $\bm{\mathcal{N}}_{3,4}$.

To deal with the term $\bm{\mathcal{N}}_{3,5}$ in \eqref{N3split}, we have to take a derivative in $\mathcal{R}_2$ from \eqref{R2}. This gives the splitting
\begin{equation}\label{N35split}
\bm{\mathcal{N}}_{3,5}(\theta)=\bm{O}_6(\theta)+\bm{O}_7(\theta)+\bm{O}_8(\theta),
\end{equation}
where
\begin{multline*}
\bm{O}_6(\theta)=\frac{1}{4\pi R^2}\int_\mathbb{S}
\sum_{n\geq1}\!\sum_{\substack{m=0 \\ n+m\geq 2}}^{n-1}\!\!\!\!\begin{pmatrix}
n\\m
\end{pmatrix} \!\!\frac{(-1)^{n}(n\!-\!m)}{R^{2n}}
(\Delta_\eta \bm{X}(\theta)\otimes \Delta_\eta \bm{X}(\theta))\\(2\Delta_\eta \bm{X}_c(\theta)^T \Delta_\eta \bm{X}(\theta))^{n-m-1}2\p_\theta\Delta_\eta \bm{X}_c(\theta)^T \Delta_\eta \bm{X}(\theta)|\Delta_\eta \bm{X}(\theta)|^{2m}\bm{F}_L(\eta)d\eta,
\end{multline*}
\begin{multline*}
\bm{O}_7(\theta)=\frac{1}{4\pi R^2}\int_\mathbb{S}
\sum_{n\geq1}\!\sum_{\substack{m=0 \\ n+m\geq 2}}^{n-1}\!\!\!\!\begin{pmatrix}
n\\m
\end{pmatrix} \!\!\frac{(-1)^{n}(n\!-\!m)}{R^{2n}}(\Delta_\eta \bm{X}(\theta)\otimes \Delta_\eta \bm{X}(\theta))\\
(2\Delta_\eta \bm{X}_c(\theta)^T \Delta_\eta \bm{X}(\theta))^{n-m-1}
2\Delta_\eta \bm{X}_c(\theta)^T \p_\theta\Delta_\eta \bm{X}(\theta)|\Delta_\eta \bm{X}(\theta)|^{2m}\bm{F}_L(\eta)d\eta,
\end{multline*}
\begin{multline*}
\bm{O}_8(\theta)=\frac{1}{2\pi R^2}\int_\mathbb{S}
\sum_{n\geq1}\!\sum_{m=1}^n\begin{pmatrix}
n\\m
\end{pmatrix} \!\!\frac{(-1)^{n}m}{R^{2n}}(\Delta_\eta \bm{X}(\theta)\otimes \Delta_\eta \bm{X}(\theta))\\
(2\Delta_\eta \bm{X}_c(\theta)^T \Delta_\eta \bm{X}(\theta))^{n-m}
|\Delta_\eta \bm{X}(\theta)|^{2(m-1)}\Delta_\eta \bm{X}(\theta)^T\p_\theta \Delta_\eta \bm{X}(\theta)\bm{F}_L(\eta)d\eta.
\end{multline*}
Comparing $\bm{O}_6$ and $\bm{O}_8$ to $\bm{O}_1$ and $\bm{O}_3$, respectively, in \eqref{N31split}, and recalling the bounds \eqref{O1n}, \eqref{O3n}, together with \eqref{XotimesX}, we find that
\begin{equation*}
\|\bm{O}_6\|_{\fzeronenu}\leq\frac{\|\bm{X}\|_{\foneonenu}^2}{2R^2}
\sum_{n\geq2}\frac{(2\sqrt{2})^n e^{n \nuTIME } n}{R^n}\|\bm{X}\|_{\foneonenu}^n\Big(1+\frac{\|\bm{X}\|_{\foneonenu}}{2\sqrt{2}e^{\nuTIME }R}\Big)^{n-1} \|\bm{F}_L\|_{\fzeronenu},
\end{equation*}
\begin{equation*}
\begin{aligned}
\|\bm{O}_8\|_{\fzeronenu}\leq \frac{\|\bm{X}\|_{\foneonenu}^2}{ R^2}
\sum_{n\geq1}\frac{n\|\bm{X}\|_{\foneonenu}^{n-1}}{R^{n-1}}\Big(2\sqrt{2}e^{\nuTIME }+\frac{\|\bm{X}\|_{\foneonenu}}{R}\Big)^{n-1}\frac{\|\bm{X}\|_{\foneonenu}^{2}}{R^2}\|\bm{F}_L\|_{\fzeronenu},
\end{aligned}
\end{equation*}
which after summation in $n$ the right side above becomes
\begin{equation*}
\|\bm{O}_6\|_{\fzeronenu}
\leq 4e^{2\nu_m }\frac{\Big(1+\frac{\|\bm{X}\|_{\foneonenu}}{2\sqrt{2}e^{\nu_m }R}\Big)
\bigg(2-\frac{2\sqrt{2}e^{\nu_m }\|\bm{X}\|_{\foneonenu}}{R}\Big(1+\frac{\|\bm{X}\|_{\foneonenu}}{2\sqrt{2}e^{\nu_m }R}\Big)\bigg)}{\bigg(1-2\sqrt{2}e^{\nu_m }\frac{\|\bm{X}\|_{\foneonenu}}{R}\Big(1+\frac{\|\bm{X}\|_{\foneonenu}}{2\sqrt{2}e^{\nu_m }R}\Big)\bigg)^2}\frac{\|\bm{X}\|_{\foneonenu}^{4}}{R^4}\|\bm{F}_L\|_{\fzeronenu},
\end{equation*}
\begin{equation*}
\|\bm{O}_8\|_{\fzeronenu}\leq \frac{1}{\bigg(1-2\sqrt{2}e^{\nu_m }\frac{\|\bm{X}\|_{\foneonenu}}{R}\Big(1+\frac{\|\bm{X}\|_{\foneonenu}}{2\sqrt{2}e^{\nu_m }R}\Big)\bigg)^2}\frac{\|\bm{X}\|_{\foneonenu}^{4}}{R^4}\|\bm{F}_L\|_{\fzeronenu}.
\end{equation*}
It is now clear that, for the same reason that the bound for $\bm{O}_2$ \eqref{O2aux} was the same than for $\bm{O}_1$ \eqref{O1aux}, the estimate for $\bm{O}_7$ is the same than the one for $\bm{O}_6$. 
 Therefore, with \eqref{radiusbound}, we conclude that
\begin{equation}\label{N35bound}
\|\bm{\mathcal{N}}_{3,5}\|_{\fzeronenu}\leq 17C_7C_1^4\|\bm{X}\|_{\foneonenu}^{4}\|\bm{F}_L\|_{\fzeronenu},
 \end{equation}
with
\begin{multline}\label{C7}
C_7=
\\
\frac{16}{17}e^{2\nu_m }C_2C_3\bigg(1\!-\!C_8^{-1}\sqrt{2}e^{\nu_m }\|\bm{X}\|_{\foneonenu}\Big(1\!+\!\frac{1}{2\sqrt{2}}C_8^{-1}e^{-\nu_m }\|\bm{X}\|_{\foneonenu}\Big)\!\bigg)\!+\frac{C_3^2}{17}\!,
\end{multline}
where we note that $C_7$ is indeed increasing in $\|\bm{X}\|_{\foneonenu}$ as can be seen because the infinite sums in the upper bounds of $\|\bm{O}_6\|_{\fzeronenu}$ and $\|\bm{O}_8\|_{\fzeronenu}$ above are indeed increasing.   
Further above we are also using 
\begin{equation}\label{C8}
C_8\eqdef \sqrt{1+\frac12\|\bm{X}\|_{\foneonenu}^2}.
\end{equation}
And we are further using $C_2$ and $C_3$ from \eqref{C2} and \eqref{C3}.

Recalling the bounds \eqref{aux3} and \eqref{aux4}, the remaining terms $\bm{\mathcal{N}}_{3,6}$ - $\bm{\mathcal{N}}_{3,11}$ can be estimated similarly, using also $C_1$, $C_6$ and $C_7$ from \eqref{C1}, \eqref{C6} and \eqref{C7}, to obtain that
\begin{equation}\label{N3611bound}
\begin{aligned}
\|\bm{\mathcal{N}}_{3,6}\|_{\foneonenu}&\leq 8e^{2\nu_m }C_1^2\|\bm{X}\|_{\foneonenu}^{2}\|\bm{F}_L\|_{\fzeronenu},\\
\|\bm{\mathcal{N}}_{3,7}\|_{\foneonenu}&\leq 8e^{2\nu_m }C_1^2\|\bm{X}\|_{\foneonenu}^{2}\|\bm{F}_L\|_{\fzeronenu},\\
\|\bm{\mathcal{N}}_{3,8}\|_{\foneonenu}&\leq 18\sqrt{2}e^{\nu_m }C_6C_1^3\|\bm{X}\|_{\foneonenu}^{3}\|\bm{F}_L\|_{\fzeronenu},\\
\|\bm{\mathcal{N}}_{3,9}\|_{\foneonenu}&\leq 34\sqrt{2}e^{\nu_m }C_7C_1^3\|\bm{X}\|_{\foneonenu}^{3}\|\bm{F}_L\|_{\fzeronenu},\\
\|\bm{\mathcal{N}}_{3,10}\|_{\foneonenu}&\leq 18e^{2\nu_m }C_6C_1^2\|\bm{X}\|_{\foneonenu}^{2}\|\bm{F}_L\|_{\fzeronenu},\\
\|\bm{\mathcal{N}}_{3,11}\|_{\fzeronenu}&\leq 34e^{2\nu_m}C_7C_1^2\|\bm{X}\|_{\foneonenu}^{2}\|\bm{F}_L\|_{\fzeronenu}.
\end{aligned}
\end{equation}
Therefore, from the splitting \eqref{N3split} and adding all the bounds \eqref{N31bound}, \eqref{N32bound}, \eqref{N33bound}, \eqref{N34bound}, \eqref{N35bound}, and \eqref{N3611bound}, we conclude that
\begin{equation}\label{N3}
\|\bm{\mathcal{N}}_{3}\|_{\fzeronenu}\leq \frac{147}{2}C_9C_1^2\|\bm{X}\|_{\foneonenu}^{2}\|\bm{F}_L\|_{\fzeronenu},
\end{equation}
where
\begin{equation}\label{C9}
\begin{aligned}
&C_9=\frac{2}{147}\Big(\frac92C_4\!+\!C_5\!+\!16e^{2\nu_m}\!+\!18e^{2\nu_m}C_6\!+\!34e^{2\nu_m}C_7\\
&\quad+\big(\sqrt{2}\!+\!18\sqrt{2}C_6\!+\!34\sqrt{2}C_7\big)e^{\nu_m}C_1\xoneonenu\!+\!\big(9C_6\!+\!17C_7\big)C_1^2\xoneonenu^2\Big),
\end{aligned}
\end{equation}
with $C_1$, $C_4$, $C_5$, $C_6$, and $C_7$ defined in \eqref{C1}, \eqref{C4}, \eqref{C5}, \eqref{C6}, and \eqref{C7}. 
\\

\noindent\underline{$\bm{\mathcal{N}}_2$ estimates:}
It is clear from \eqref{Ni} that the previous estimate for $\bm{\mathcal{N}}_3$ in \eqref{N3} is also valid for $\bm{\mathcal{N}}_2$, with $\|\bm{F}_L\|_{\fzeronenu}$ replaced by $\|\bm{F}_0\|_{\fzeronenu}$. Therefore we have that
\begin{equation}\label{N2}
\|\bm{\mathcal{N}}_{2}\|_{\fzeronenu}
\leq  \frac{147}{2}C_9C_1^2\|\bm{X}\|_{\foneonenu}^{2}\|\bm{F}_0\|_{\fzeronenu},
\end{equation}
with $C_9$ defined above in \eqref{C9}.
\\

\noindent\underline{$\bm{\mathcal{N}}_4$ estimates:}
We split the term $\bm{\mathcal{N}}_{4}$ in \eqref{Ni} following the splitting \eqref{Gsplit}:
\begin{equation*}
\bm{\mathcal{N}}_4(\theta)=\bm{\mathcal{N}}_{4,1}(\theta)+\bm{\mathcal{N}}_{4,2}(\theta)+\bm{\mathcal{N}}_{4,3}(\theta),
\end{equation*}
where
\begin{equation*}
\begin{aligned}
\bm{\mathcal{N}}_{4,1}(\theta)&=\int_{\mathbb{S}}\p_\theta\big( G_0(\Delta_\eta\bm{X}_c(\theta))\big)\mb{F}_N(\eta)d\eta,\\
\bm{\mathcal{N}}_{4,2}(\theta)&=\int_{\mathbb{S}}\p_\theta \big(G_L(\Delta_\eta\bm{X}_c(\theta),\Delta_\eta\bm{X}(\theta))(\theta)\big)\mb{F}_N(\eta)d\eta,\\
\bm{\mathcal{N}}_{4,3}(\theta)&=\int_{\mathbb{S}}\p_\theta\big( G_N(\Delta_\eta\bm{X}_c(\theta),\Delta_\eta\bm{X}(\theta)\big)\mb{F}_N(\eta)d\eta.\\
\end{aligned}
\end{equation*}
We notice that the term $\bm{\mathcal{N}}_{4,2}$ can be bounded exactly as $\bm{\mathcal{N}}_1$ in \eqref{Ni}, with $\|\bm{F}_L\|_{\fzeronenu}$ replaced by $\|\bm{F}_N\|_{\fzeronenu}$, that is, from \eqref{N1} we have
\begin{equation*}
\|\bm{\mathcal{N}}_{4,2}\|_{\fzeronenu}\leq 11\sqrt{2}e^{3\nu_m }C_1 \|\bm{X}\|_{\foneonenu}\|\bm{F}_N\|_{\fzeronenu}, 
\end{equation*}
with $C_1$ from \eqref{C1}.  Analogously using the similarity between $\bm{\mathcal{N}}_{4,3}$ and $\bm{\mathcal{N}}_{3}$ in \eqref{N3}, we have
\begin{equation*}
\|\bm{\mathcal{N}}_{4,3}\|_{\fzeronenu}\leq 
 \frac{147}{2}C_9C_1^2\|\bm{X}\|_{\foneonenu}^{2}\|\bm{F}_N\|_{\fzeronenu},
\end{equation*}
where we recall $C_9$ from \eqref{C9}.

Now taking a derivative in \eqref{G0}, the term $\bm{\mathcal{N}}_{4,1}$ can be written as follows
\begin{equation*}
\begin{aligned}
\bm{\mathcal{N}}_{4,1}(\theta)&=-\frac{1}{4\pi}\int_{\mathbb{S}}\frac{F_N(\eta)}{2\tan{\Big(\frac{\theta-\eta}{2}\Big)}}+\frac{1}{4\pi R^2}\int_{\mathbb{S}}\p_\theta\Delta_\eta\bm{X}_c(\theta)\otimes \Delta_\eta\bm{X}_c(\theta)\bm{F}_N(\eta)d\eta\\
&\quad+\frac{1}{4\pi R^2}\int_{\mathbb{S}}\Delta_\eta\bm{X}_c(\theta)\otimes \p_\theta\Delta_\eta\bm{X}_c(\theta)\bm{F}_N(\eta)d\eta,
\end{aligned}
\end{equation*}
and therefore, recalling \eqref{pXcopXc}, we have
\begin{equation*}
\|\bm{\mathcal{N}}_{4,1}\|_{\fzeronenu}\leq \big(\frac14+2e^{2\nu_m }\big)\|\bm{F}_N\|_{\fzeronenu}.
\end{equation*}
We add the previous bounds to obtain that
\begin{equation}\label{N4}
\|\bm{\mathcal{N}}_4\|_{\fzeronenu}\leq \frac94 C_{10}\|\bm{F}_N\|_{\fzeronenu},
\end{equation}
with
\begin{equation}\label{C10}
\begin{aligned}
C_{10}&=\frac49\bigg(\frac14+2e^{2\nu_m }+11\sqrt{2}e^{3\nu_m}C_1 \|\bm{X}\|_{\foneonenu}+\frac{147}{2}C_9C_1^2\|\bm{X}\|_{\foneonenu}^{2}\bigg),
\end{aligned}
\end{equation}
with $C_1$, $C_9$  defined in \eqref{C1} and \eqref{C9}.
Combining the estimates \eqref{N1}, \eqref{N2}, \eqref{N3}, and \eqref{N4}, we conclude from \eqref{Ni} that
\begin{equation*}
\begin{aligned}
\|\bm{\mathcal{N}}\|_{\foneonenu}&\leq \frac{147}{2}C_9C_1^2\|\bm{X}\|_{\foneonenu}\|\bm{F}_0\|_{\fzeronenu}\|\bm{X}\|_{\ftwoonenu}+11\sqrt{2}C_{11}C_1\|\bm{X}\|_{\foneonenu}\|\bm{F}_L\|_{\fzeronenu} \\
&\quad+\frac94 C_{10}\|\bm{F}_N\|_{\fzeronenu},
\end{aligned}
\end{equation*}
where 
\begin{equation}\label{C11}
C_{11}=
\\
\frac{1}{11\sqrt{2}}\Big(11\sqrt{2}e^{3\nu_m}+\frac{147}{2}C_9C_1\xoneonenu\Big),
\end{equation}
and $C_1$, $C_9$ are defined in \eqref{C1} and \eqref{C9}.
Rename the constants 
\begin{equation}\label{D1D2D3}
D_1=C_{11}C_1,\qquad D_2=C_9C_1^2,\qquad D_3=C_{10},
\end{equation}
to get the result \eqref{Nestimate}, where $C_1$, $C_9$, $C_{10}$, and $C_{11}$ are given in \eqref{C1}, \eqref{C9}, \eqref{C10}, and \eqref{C11}.
\end{proof}

%\addtocontents{toc}{\vspace{0.2cm}}
\subsection{\textit{A Priori} Estimates on $\bm{F}$}\label{secF}

In this section we will obtain bounds for $\bm{F}_0$, $\bm{F}_L$, and $\bm{F}_N$ in $\mathcal{F}^{0,1}_\nu$. 

\begin{prop}  
	Assume that $\bm{X}\in\ftwoonenu$ and that $\bm{F}$ solves \eqref{viscosityjump}. Then, the functions $\bm{F}_0$ in \eqref{F0}, $\bm{F}_L$ in \eqref{F1} and $\bm{F}_N=\bm{F}-\bm{F}_0-\bm{F}_L$, satisfy the following estimates:
	\begin{equation}\label{F0bound}
    \begin{aligned}
        \|\bm{F}_0\|_{\fzeronenu}&\leq \sqrt{2}e^{\nu_m }C_8\frac{2A_e}{1-A_\mu},
    \end{aligned}
\end{equation}
where $C_8$ is defined in \eqref{C8}.  Further
\begin{equation}\label{F1bound}
    \begin{aligned}
    \|\bm{F}_L\|_{\fzeronenu}&\leq 2A_e\Big(1+\frac{|A_\mu|}{1-A_\mu}\Big)\xtwoonenu,
    \end{aligned}
\end{equation}	
and
\begin{equation}\label{FNbound}
    \|\bm{F}_N\|_{\fzeronenu}\leq 1000\sqrt{2}A_e \frac{|A_\mu|(1+|A_\mu|)}{(1-A_\mu)^2(1+A_\mu)} D_4\xoneonenu\xtwoonenu,
\end{equation}
where $D_4=D_4(\xoneonenu; A_\mu,\nu_m)$ is an increasing function of $\xoneonenu$ as in \eqref{constant.def} such that $$\lim_{\xoneonenu\to0^+}D_4(\xoneonenu;0,0)=1,$$
and is defined in \eqref{D4}.  
\end{prop}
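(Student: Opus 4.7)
The plan is to establish the three estimates \eqref{F0bound}, \eqref{F1bound}, and \eqref{FNbound} in order. The first two follow directly from the explicit formulas derived in Section \ref{secLinearization} together with straightforward Fourier computations on the circle, while \eqref{FNbound} requires solving an implicit integral equation for $\bm{F}_N$ alone.

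For \eqref{F0bound}, I would first extend the derivation of \eqref{F0star} from $\bm{X}_\star$ to the time-dependent circle $\bm{X}_c$ in \eqref{unifcircles}. Because $\mathcal{T}_{ijk}$ in \eqref{Tijk} is homogeneous of degree $-1$, the operator $\bm{\mathcal{S}}$ in \eqref{S} is invariant under the dilation $\bm{\mathcal{X}} \mapsto R\bm{\mathcal{X}}$, and combined with translation invariance this gives $\bm{F}_0 = \frac{2A_e}{1-A_\mu}\partial_\theta^2 \bm{X}_c$. From \eqref{circlefourier}, $\partial_\theta^2 \bm{X}_c$ has Fourier support only at $k=\pm 1$ with $|\widehat{\partial_\theta^2 \bm{X}_c}(\pm 1)| = R/\sqrt{2}$, so $\|\partial_\theta^2 \bm{X}_c\|_{\fzeronenu} = \sqrt{2}e^{\nu(t)}R$. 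Combining this with $\nu(t)\le \nu_m$ and the bound $R \le C_8$, obtained from \eqref{freq12} via $R = \sqrt{2}|\widehat{Y}_{c,2}(1)|$ and the norm identity \eqref{equivalence}, gives \eqref{F0bound}.

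For \eqref{F1bound}, I would apply the triangle inequality to the explicit formula \eqref{F1}. The two contributions split as $\|\partial_\theta^2 \bm{X}\|_{\fzeronenu} = \xtwoonenu$ and, since $\mathcal{R}^{-1}$ is a Euclidean isometry acting pointwise, $\|\mathcal{R}^{-1}\partial_\theta \bm{X}\|_{\fzeronenu} = \xoneonenu$. Because the homogeneous Fourier norms sum only over $k \in \mathbb{Z}\setminus\{0\}$, every summand satisfies $|k| \le |k|^2$, yielding the embedding $\xoneonenu \le \xtwoonenu$, from which \eqref{F1bound} is immediate.

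For \eqref{FNbound}, I would derive a fixed-point identity for $\bm{F}_N$ by substituting $\bm{F} = \bm{F}_0 + \bm{F}_L + \bm{F}_N$ and $\bm{\mathcal{X}} = \bm{X}_c + \bm{X}$ into \eqref{viscosityjump}, Taylor-expanding $\bm{\mathcal{S}}(\bm{F},\bm{\mathcal{X}})$ in powers of $\bm{X}$ around $\bm{X}_c$, and subtracting off the equations defining $\bm{F}_0$ and $\bm{F}_L$. The result is an identity of the form $(I - 2A_\mu \bm{\mathcal{S}}_0(\cdot,\bm{X}_c))\bm{F}_N = \bm{G}$, where $\bm{G}$ collects the remaining terms, each at least quadratic in $(\bm{X},\bm{F}_L,\bm{F}_N)$. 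The left-hand operator has an extremely simple Fourier structure: from \eqref{matrixM}, $M(\theta,\eta)$ contains only the zero and first harmonics in the variable $\theta+\eta$, so $I - 2A_\mu \bm{\mathcal{S}}_0(\cdot,\bm{X}_c)$ acts as the identity on every mode $|k|\ge 2$, reduces to multiplication by $1+A_\mu$ on $k=0$, and on the modes $k=\pm 1$ becomes a finite-dimensional linear map whose determinant equals $1-A_\mu^2$. Its inverse is therefore bounded in $\fzeronenu$ with operator norm of order $(1-A_\mu)^{-1}(1+A_\mu)^{-1}$. Each term in $\bm{G}$ would then be estimated in $\fzeronenu$ using the same series expansions \eqref{R1}--\eqref{R2} and principal-value multiplier bounds from Lemma \ref{lemmaI} as in the proof of Proposition \ref{nonlinearestimates}, combined with \eqref{F0bound} and \eqref{F1bound}. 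The main obstacle is the careful bookkeeping required to track explicit constants through the infinite series and to absorb the $\bm{F}_N$-dependence on the right back into the operator on the left using smallness of $\xoneonenu$, so that the final bound reproduces the precise prefactor $|A_\mu|(1+|A_\mu|)(1-A_\mu)^{-2}(1+A_\mu)^{-1}$ appearing in \eqref{FNbound}.
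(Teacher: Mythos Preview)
Your proposal is correct and follows essentially the same approach as the paper: the bounds \eqref{F0bound} and \eqref{F1bound} are obtained exactly as you describe, and for \eqref{FNbound} the paper likewise derives the fixed-point identity \eqref{FNequation}, inverts $I-2A_\mu\bm{\mathcal{S}}_0(\cdot,\bm{X}_c)$ mode-by-mode (identity for $|k|\ge 2$, scalar $1+A_\mu$ at $k=0$, and a $2\times 2$ block with determinant $1-A_\mu^2$ at $k=\pm 1$, yielding \eqref{FNbound2}), estimates the source term via the expansions \eqref{R2}--\eqref{R3} and Lemma~\ref{lemmaI}, and absorbs the residual $\bm{F}_N$-dependence using smallness of $\xoneonenu$ (the constant $C_{17}$ in \eqref{C17}).
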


\begin{proof}
First, for a general circle the expression for $\bm{F}_0$ in \eqref{F0star} becomes
\begin{equation}\label{F0}
\bm{F}_0(\theta)=\frac{2A_e}{1-A_\mu}\p_\theta^2 \bm{X}_c.
\end{equation}
 Similar to \eqref{pXcpX} using \eqref{circlefourier} we have for \eqref{F0bound} that
\begin{equation*}
    \begin{aligned}
        \|\bm{F}_0\|_{\fzeronenu}&\leq \sqrt{2}e^{\nu_m }R\frac{2A_e}{1-A_\mu}\leq \sqrt{2} e^{\nu_m }C_8\frac{2A_e}{1-A_\mu},
    \end{aligned}
\end{equation*}
where $C_8$ is given by \eqref{C8} and we used \eqref{radiusbound}.

Further $\bm{F}_L$ is given by \eqref{F1}
and so we have
\begin{equation*}
    \begin{aligned}
    \|\bm{F}_L\|_{\fzeronenu}&\leq 2A_e\xtwoonenu+\frac{2|A_\mu| A_e}{1-A_\mu}\xoneonenu,
    \end{aligned}
\end{equation*}
which gives \eqref{F1bound}.

We proceed with the expansion of the nonlinear terms in \eqref{viscosityjump}. 
First, using \eqref{expand.d}, we write
\begin{equation*}
\frac{1}{|\Delta \bm{X}+\Delta \bm{X}_c|^4}\!=\!\frac1{16R^4\sin^4{\left(\frac{\theta-\eta}{2}\right)}}\Big(1-\frac{4}{R^2}\Delta_\eta \bm{X}_c(\theta)^T \Delta_\eta \bm{X}(\theta)+\mathcal{R}_3(\Delta_\eta \bm{X}(\theta))\Big),
\end{equation*}
where
\begin{equation}\label{R3}
\begin{aligned}
\mathcal{R}_3(\Delta_\eta \bm{X}(\theta))&=-\frac{4}{R^2}\Delta_\eta \bm{X}_c(\theta)^T \Delta_\eta \bm{X}(\theta)\mathcal{R}_2(\Delta_\eta \bm{X}(\theta))
+2\mathcal{R}_2(\Delta_\eta \bm{X}(\theta))\\
&\quad+\frac{4}{R^4}(\Delta_\eta \bm{X}_c(\theta)^T \Delta_\eta \bm{X}(\theta))^2+(\mathcal{R}_2(\Delta_\eta \bm{X}(\theta)))^2,
\end{aligned}
\end{equation}
and $\mathcal{R}_2(\Delta_\eta \bm{X}(\theta))$ is given in \eqref{R2}.
Then, we use the above expansion to rewrite $\bm{\mathcal{S}}(\bm{F},\bm{\mathcal{X}})(\theta)$ from \eqref{S} as follows
\begin{equation}\label{auxS}
\begin{aligned}
\bm{\mathcal{S}}(\bm{F},\bm{\mathcal{X}})(\theta)=\int_{\mathbb{S}}K(\bm{X}_c,\bm{X})(\theta,\eta) \frac{\bm{F}(\theta-\eta)}{2\sin{(\eta/2)}}d\eta,
\end{aligned}
\end{equation}
where
\begin{equation*}
\begin{aligned}
K(\bm{X}_c,\bm{X})(\theta,\eta)=\frac1{\pi R^4}\big(\partial_\theta \bm{\mathcal{X}}(\theta)^\perp\big)^T\Delta_{\theta-\eta} \bm{\mathcal{X}}(\theta)\Delta_{\theta-\eta} \bm{\mathcal{X}}(\theta)\otimes \Delta_{\theta-\eta} \bm{\mathcal{X}}(\theta)\\
\Big(1-\frac{4}{R^2}\Delta_\eta \bm{X}_c(\theta)^T \Delta_\eta \bm{X}(\theta)+\mathcal{R}_3(\Delta_\eta \bm{X}(\theta))\Big),
\end{aligned}
\end{equation*}
and we recall the notation $\bm{\mathcal{X}}(\theta)=\bm{X}_c(\theta)+\bm{X}(\theta)$ and \eqref{deltaeta}. 

We will plug in the splitting for $\bm{F}$ in \eqref{Fsplit} into \eqref{viscosityjump}. We first introduce an analogous splitting for $K$ as
\begin{equation}\label{Ksplit}
K(\bm{X}_c,\bm{X})(\theta,\eta)=K_0(\bm{X}_c)(\theta,\eta)+K_L(\bm{X}_c,\bm{X})(\theta,\eta)+K_N(\bm{X}_c,\bm{X})(\theta,\eta).
\end{equation}
After we remove the zero order \eqref{Fstar2}, and linear order terms \eqref{Flintemp}, then the equation \eqref{viscosityjump}  for the non-linear order terms becomes the following equation for $\bm{F}_N$, 
\begin{equation}\label{FNequation}
\begin{aligned}
\bm{F}_N(\theta)-2A_\mu\int_\mathbb{S} K_0(\bm{X}_c)(\theta,\eta) \frac{\bm{F}_N(\theta-\eta)}{2\sin{(\eta/2)}}d\eta =\bm{J}(\bm{X},\bm{F}_N)(\theta),
\end{aligned}
\end{equation}
with
\begin{multline}\label{J}
\bm{J}(\bm{X},\bm{F}_N)(\theta)
\\
=2A_\mu\int_\mathbb{S}\Big(K_L(\bm{X}_c,\bm{X})(\theta,\eta)\!+\!K_N(\bm{X}_c,\bm{X})(\theta,\eta)\Big) \frac{\bm{F}_N(\theta\!-\!\eta)}{2\sin{(\eta/2)}}d\eta
\\
+2A_\mu\int_\mathbb{S}\big(K_L(\bm{X}_c,\bm{X})(\theta,\eta)+K_N(\bm{X}_c,\bm{X})(\theta,\eta)\big) \frac{\bm{F}_L(\theta-\eta)}{2\sin{(\eta/2)}}d\eta
\\
+2A_\mu\int_\mathbb{S}K_N(\bm{X}_c,\bm{X})(\theta,\eta)\bm{F}_0(\theta\!-\!\eta)\frac{d\eta}{2\sin{(\eta/2)}},
\end{multline}
where the first term in $\bm{J}$ will be treated as a perturbation with $\bm{F}_0$ and $\bm{F}_L$ given in \eqref{F0} and \eqref{F1} respectively. Notice that $K_0$ is given by
\begin{equation*}
 K_0(\bm{X}_c)(\theta,\eta)=\frac1{\pi R^4}\big(\partial_\theta \bm{X}_c(\theta)^\perp\big)^T\Delta_{\theta-\eta} \bm{X}_c(\theta)\Delta_{\theta-\eta} \bm{X}_c(\theta)\otimes \Delta_{\theta-\eta} \bm{X}_c(\theta),
\end{equation*}
where by \eqref{comput1} and \eqref{comput3} we have that
\begin{equation*}
\begin{aligned}
\big(\partial_\theta \bm{X}_c(\theta)^\perp\big)^T\Delta_{\theta-\eta} \bm{X}_c(\theta)=-R^2\sin{(\eta/2)},
\end{aligned}
\end{equation*}
\begin{equation*}
\begin{aligned}
\Delta_{\theta-\eta} \bm{X}_c(\theta)\otimes \Delta_{\theta-\eta} \bm{X}_c(\theta)
=\frac{a^2}2\begin{bmatrix}
1-\cos{(2\theta-\eta)}&-\sin{(2\theta-\eta)}\\
-\sin{(2\theta-\eta)}&1+\cos{(2\theta-\eta)}
\end{bmatrix}\\
+\frac{b^2}2\begin{bmatrix}
1\!+\!\cos{(2\theta-\eta)}&\sin{(2\theta-\eta)}\\
\sin{(2\theta-\eta)}&1\!-\!\cos{(2\theta-\eta)}
\end{bmatrix}\!+\!ab\begin{bmatrix}
\sin{(2\theta-\eta)}&\!-\!\cos{(2\theta-\eta)}\\
-\cos{(2\theta-\eta)}&\!-\!\sin{(2\theta-\eta)}
\end{bmatrix}.
\end{aligned}
\end{equation*}
Therefore, 
\begin{equation*}
\begin{aligned}
\int_\mathbb{S} K_0(\bm{X}_c)(\theta,\eta) &\frac{\bm{F}_N(\theta-\eta)}{2\sin{(\eta/2)}}d\eta=-\frac1{4\pi}\int_\mathbb{S}\bm{F}_N(\theta-\eta)d\eta\\
&-\frac{a^2-b^2}{4\pi R^2}\int_\mathbb{S}\begin{bmatrix}
-\cos{(2\theta-\eta)}&-\sin{(2\theta-\eta)}\\
-\sin{(2\theta-\eta)}&\cos{(2\theta-\eta)}
\end{bmatrix}\bm{F}_N(\theta-\eta)d\eta\\
&-\frac{ab}{2\pi R^2}\int_\mathbb{S}\begin{bmatrix}
\sin{(2\theta-\eta)}&\!-\!\cos{(2\theta-\eta)}\\
-\cos{(2\theta-\eta)}&\!-\!\sin{(2\theta-\eta)}
\end{bmatrix}\bm{F}_N(\theta-\eta)d\eta.
\end{aligned}
\end{equation*}
Then taking the Fourier transform we find that
\begin{equation*}
\begin{aligned}
\mathcal{F}\Big(\int_\mathbb{S} K_0(\bm{X}_c)(\theta,\eta) \frac{\bm{F}_N(\theta-\eta)}{2\sin{(\eta/2)}}d\eta\Big)(k)&
=-\frac12\widehat{\bm{F}_N}(0)\delta_0(k)
\\
&\quad
+\frac{(a+ib)^2}{4R^2}\begin{bmatrix}
1&-i\\
-i&-1
\end{bmatrix}\widehat{\bm{F}}_N(-1)\delta_1(k)\\
&\quad+\frac{(a-ib)^2}{4R^2}\begin{bmatrix}
1&i\\
i&-1
\end{bmatrix}\widehat{\bm{F}}_N(1)\delta_{-1}(k),
\end{aligned}
\end{equation*}
Equation \eqref{FNequation} is then given on the Fourier side by the following expressions:  
\begin{equation}\label{aux5}
\begin{aligned}
    \widehat{\bm{F}}_N(0)&=\frac{1}{1+A_\mu}\widehat{\bm{J}(\bm{X},\bm{F}_N)}(0),\\
    \widehat{\bm{F}}_N(k)&=\widehat{\bm{J}(\bm{X},\bm{F}_N)}(k),\qquad k\geq2,
\end{aligned}
\end{equation}
while for $k=1$ one has that 
\begin{equation*}
\begin{aligned}
\widehat{\bm{F}}_N(1)-A_\mu\frac{(a+ib)^2}{2R^2}\begin{bmatrix}
1&-i\\-i&-1
\end{bmatrix}\widehat{\bm{F}}_N(-1)&=\widehat{\bm{J}(\bm{X},\bm{F}_N)}(1),\\
\widehat{\bm{F}}_N(-1)-A_\mu\frac{(a-ib)^2}{2R^2}\begin{bmatrix}
1&i\\i&-1
\end{bmatrix}\widehat{\bm{F}}_N(1)&=\widehat{\bm{J}(\bm{X},\bm{F}_N)}(-1),
\end{aligned}
\end{equation*}
which gives that
\begin{equation*}
\begin{aligned}
\begin{bmatrix}
1-A_\mu^2/2&-iA_\mu^2/2\\
iA_\mu^2/2&1-A_\mu^2/2
\end{bmatrix}\widehat{\bm{F}}_N(1)&=A_\mu\frac{(a+ib)^2}{2R^2}\begin{bmatrix}
1&-i\\-i&-1
\end{bmatrix}\widehat{\bm{J}(\bm{X},\bm{F}_N)}(-1)\\
&\quad+\widehat{\bm{J}(\bm{X},\bm{F}_N)}(1),
\end{aligned}
\end{equation*}
and thus
\begin{equation*}
\begin{aligned}
\widehat{\bm{F}}_N(1)&=\frac{A_\mu}{1-A_\mu^2}\frac{(a+ib)^2}{2R^2}\begin{bmatrix}
1&-i\\-i&-1
\end{bmatrix}\widehat{\bm{J}(\bm{X},\bm{F}_N)}(-1)\\
&\quad+\frac{1}{1-A_\mu^2}\begin{bmatrix}
1-A_\mu^2/2&iA_\mu^2/2\\
-iA_\mu^2/2&1-A_\mu^2/2
\end{bmatrix}\widehat{\bm{J}(\bm{X},\bm{F}_N)}(1).
\end{aligned}
\end{equation*}
Since we have that
\begin{equation*}
\Big|\Big|\begin{bmatrix}
1&-i\\-i&-1
\end{bmatrix}\Big|\Big|=2,\qquad \Big|\Big|\begin{bmatrix}
1-A_\mu^2/2&iA_\mu^2/2\\
-iA_\mu^2/2&1-A_\mu^2/2
\end{bmatrix}\Big|\Big|=1, 
\end{equation*}
we obtain 
\begin{equation*}
\begin{aligned}
|\widehat{\bm{F}}_N(1)|&\leq \frac{|A_\mu|}{1-A_\mu^2}| \widehat{\bm{J}(\bm{X},\bm{F}_N)}(-1)|+\frac{1}{1-A_\mu^2}|\widehat{\bm{J}(\bm{X},\bm{F}_N)}(1)|\\
&=\frac{1+|A_\mu|}{(1-A_\mu)(1+A_\mu)}|\widehat{\bm{J}(\bm{X},\bm{F}_N)}(1)|,
\end{aligned}
\end{equation*}
which together with \eqref{aux5} implies that 
\begin{equation}\label{FNbound2}
\|\bm{F}_N\|_{\fzeronenu}\leq\frac{1+|A_\mu|}{(1-A_\mu)(1+A_\mu)}\|\bm{J}(\bm{X},\bm{F}_N)\|_{\fzeronenu}.
\end{equation}
This is our estimate for $\bm{F}_N$.
\vspace{0.2cm}

\noindent\underline{$\bm{J}(\bm{X},\bm{F}_N)$ estimate:} 
\vspace{0.2cm}

Notice that $\bm{J}(\bm{X},\bm{F}_N)$ corresponds to the nonlinear terms in $\bm{\mathcal{S}}(\bm{F},\bm{\mathcal{X}})$ except the one in the left-hand side of \eqref{FNequation}. For simplicity in notation, we are going to estimate $\bm{\mathcal{S}}(\bm{F},\bm{\mathcal{X}})$, and later extract from there the corresponding bounds for $\bm{J}(\bm{X},\bm{F}_N)$.

Consider the following splitting for $\bm{\mathcal{S}}(\bm{F},\bm{\mathcal{X}})$ from \eqref{S}:
\begin{equation}\label{splittingS}
    \begin{aligned}
    \bm{\mathcal{S}}(\bm{F},\bm{\mathcal{X}})(\theta)=\bm{\mathcal{S}}_1(\bm{F},\bm{\mathcal{X}})(\theta)+
    \bm{\mathcal{S}}_2(\bm{F},\bm{\mathcal{X}})(\theta)+\bm{\mathcal{S}}_3(\bm{F},\bm{\mathcal{X}})(\theta),
    \end{aligned}
\end{equation}
with
\begin{multline*}
    \bm{\mathcal{S}}_1(\bm{F},\bm{\mathcal{X}})(\theta)
    \\
    \!=\!\frac{1}{\pi R^4}\!\!\int_\mathbb{S}\!\!\big(\partial_\theta \bm{\mathcal{X}}(\theta)^\perp\big)^T\!\Delta_{\theta-\eta} \bm{\mathcal{X}}(\theta)\Delta_{\theta-\eta} \bm{\mathcal{X}}(\theta)\!\otimes\! \Delta_{\theta-\eta} \bm{\mathcal{X}}(\theta)\frac{\bm{F}(\theta\!-\!\eta)}{2\sin{(\eta/2)}}d\eta,
\end{multline*}
and
\begin{equation*}
\begin{aligned}
    \bm{\mathcal{S}}_2(\bm{F},\bm{\mathcal{X}})(\theta)&=
    \frac{-4}{\pi R^6}\int_\mathbb{S}\big(\partial_\theta \bm{\mathcal{X}}(\theta)^\perp\big)^T\Delta_{\theta-\eta} \bm{\mathcal{X}}(\theta)\Delta_{\theta-\eta} \bm{\mathcal{X}}(\theta)\otimes \Delta_{\theta-\eta} \bm{\mathcal{X}}(\theta)\\
    &\hspace{4cm}\cdot\Delta_\eta \bm{X}_c(\theta)^T \Delta_\eta \bm{X}(\theta)\frac{\bm{F}(\theta-\eta)}{2\sin{(\eta/2)}}d\eta,\\
    \bm{\mathcal{S}}_3(\bm{F},\bm{\mathcal{X}})(\theta)&=\frac{1}{\pi R^4}\int_\mathbb{S}\big(\partial_\theta \bm{\mathcal{X}}(\theta)^\perp\big)^T\Delta_{\theta-\eta} \bm{\mathcal{X}}(\theta)\Delta_{\theta-\eta} \bm{\mathcal{X}}(\theta)\otimes \Delta_{\theta-\eta} \bm{\mathcal{X}}(\theta)\\
    &\hspace{4cm}\cdot\mathcal{R}_3(\Delta_\eta \bm{X}(\theta))\frac{\bm{F}(\theta-\eta)}{2\sin{(\eta/2)}}d\eta.
\end{aligned}
\end{equation*}
We take Fourier transform of $\bm{\mathcal{S}}_1(\bm{F},\bm{\mathcal{X}})$ to obtain 
\begin{multline*}
    \widehat{\bm{\mathcal{S}}_1(\bm{F},\bm{\mathcal{X}})}(k)=
    \frac{1}{\pi R^4}\sum_{k_1\in\mathbb{Z}}\cdots\sum_{k_4\in\mathbb{Z}}\big(\widehat{\p_\theta\bm{\mathcal{X}}}(k-k_1)^\perp\big)^T\widehat{\p_\theta\bm{\mathcal{X}}}(k_1-k_2)\\
    \widehat{\p_\theta\bm{\mathcal{X}}}(k_2-k_3)\otimes \widehat{\p_\theta\bm{\mathcal{X}}}(k_3-k_4)\widehat{\bm{F}}(k_4)I_2'(k_1,\dots,k_4),
\end{multline*}
where
\begin{equation*}
\begin{aligned}
|I'_2(k_1,\dots,k_4)|&=|\int_\mathbb{S}\prod_{j=1}^3\frac{\sin{((k_j-k_{j+1})\eta/2)}}{(k_j-k_{j+1})\sin{(\eta/2)}}\frac{e^{-i(k_1+k_4)\eta/2}}{2\sin{(\eta/2)}}d\eta|\\
&=|\int_\mathbb{S}\frac{\sin{((k_1+k_4)\eta/2)}}{\sin{(\eta/2)}}\prod_{j=1}^3\frac{\sin{((k_j-k_{j+1})\eta/2)}}{(k_j-k_{j+1})\sin{(\eta/2)}}d\eta|.
\end{aligned}
\end{equation*}
The integral $I'_2$ turns out to be the previously defined integral in \eqref{Sn.Integral}.  
In Lemma \ref{lemmaI} we show that $|I_2'|\leq 2\pi$.
Using \eqref{circlefourier} and \eqref{pXcpX},
we have that
\begin{equation*}
    \begin{aligned}
    |\big(\widehat{\p_\theta\bm{\mathcal{X}}}&(k-k_1)^\perp\big)^T\widehat{\p_\theta\bm{\mathcal{X}}}(k_1-k_2)|\\
    &\leq \frac{R^2}2\big(\delta_1(k-k_1)\delta_{-1}(k_1-k_2)+\delta_{-1}(k-k_1)\delta_1(k_1-k_2)\big)\\
    &\quad
    +\frac{R}{\sqrt{2}}\delta_{1,-1}(k\!-\!k_1)| \widehat{\p_\theta\bm{X}}(k_1\!-\!k_2)|
    +
    \frac{R}{\sqrt{2}}\delta_{1,-1}(k_1\!-\!k_2)| \widehat{\p_\theta\bm{X}}(k\!-\!k_1)|
    \\
     &\quad
    +
    |\widehat{\p_\theta\bm{X}}(k\!-\!k_1)|| \widehat{\p_\theta\bm{X}}(k_1\!-\!k_2)|,
    \end{aligned}
\end{equation*}
while recalling \eqref{pXcopXc}, \eqref{aux3} and \eqref{aux4}, we obtain that
\begin{equation*}
    \begin{aligned}
    \|\widehat{\p_\theta\bm{\mathcal{X}}}&(k_2-k_3)\otimes \widehat{\p_\theta\bm{\mathcal{X}}}(k_3-k_4)\|\leq \frac{R^2}{2}\delta_{1,-1}(k_2-k_3)\delta_{1,-1}(k_3-k_4)\\
    &+\frac{\sqrt{2}}{2}R\delta_{1,-1}(k_2-k_3)| \widehat{\p_\theta\bm{X}}(k_3-k_4)|+\frac{\sqrt{2}}2R| \widehat{\p_\theta\bm{X}}(k_2-k_3)|
    \delta_{1,-1}(k_3-k_4)\\
    &+|\widehat{\p_\theta\bm{X}}(k_2-k_3)||\widehat{\p_\theta\bm{X}}(k_3-k_4)|.
    \end{aligned}
\end{equation*}
Therefore, Young's inequality for convolutions yields that 
\begin{equation*}
\begin{aligned}
    \|\bm{\mathcal{S}}_1(\bm{F},\bm{\mathcal{X}})\|_{\fzeronenu}&\leq 2\Big(e^{2\nu_m}+2\sqrt{2}e^{\nu_m }\frac{\xoneonenu}{R}+\frac{\xoneonenu^2}{R^2}\Big)\\
    &\quad\cdot\Big(2e^{2\nu_m }+2\sqrt{2}e^{\nu_m }\frac{\xoneonenu}{R}+\frac{\xoneonenu^2}{R^2}\Big)\|\bm{F}\|_{\fzeronenu},
    \end{aligned}
\end{equation*}
which by \eqref{radiusbound} and notation \eqref{C1}, rewrites as
\begin{equation}\label{S1bound}
    \begin{aligned}
    \|\bm{\mathcal{S}}_1(\bm{F},\bm{\mathcal{X}})\|_{\fzeronenu}&\leq 2\Big(2e^{4\nu_m }+6\sqrt{2}e^{3\nu_m}C_1\xoneonenu\\
    &\quad+11C_{12}C_1^2\xoneonenu^2\Big)\|\bm{F}\|_{\fzeronenu},
    \end{aligned}
\end{equation}
with
\begin{equation}\label{C12}
    C_{12}=\frac1{11}\Big(11e^{2\nu_m }+4\sqrt{2}e^{\nu_m }C_1\xoneonenu+C_1^2\xoneonenu^2\Big),
\end{equation}
and $C_1$ is defined in \eqref{C1}.

Following the same steps, for $\widehat{\bm{\mathcal{S}}_2(\bm{F},\bm{\mathcal{X}})}$, one finds that
\begin{equation*}
\begin{aligned}
    \|\bm{\mathcal{S}}_2(\bm{F},\bm{\mathcal{X}})\|_{\fzeronenu}
    &
    \leq 8\sqrt{2}
    e^{\nu_m }\Big(2e^{4\nu_m }+6\sqrt{2}e^{3\nu_m}C_1\xoneonenu\\
    &\quad+11C_{12}C_1^2\xoneonenu^2\Big)C_1\xoneonenu\|\bm{F}\|_{\fzeronenu}.
    \end{aligned}
\end{equation*}
We define
\begin{equation}\label{C13}
    \begin{aligned}
    C_{13}&=\frac12\Big(2e^{4\nu_m }+6\sqrt{2}e^{3\nu_m}C_1\xoneonenu+11C_{12}C_1^2\xoneonenu^2\Big),
    \end{aligned}
\end{equation}
so that
\begin{equation}\label{S2bound}
\begin{aligned}
    \|\bm{\mathcal{S}}_2(\bm{F},\bm{\mathcal{X}})\|_{\fzeronenu}
    &
    \leq 16\sqrt{2}
    e^{\nu_m }C_{13}C_1\xoneonenu\|\bm{F}\|_{\fzeronenu}.
    \end{aligned}
\end{equation}
This completes our $\widehat{\bm{\mathcal{S}}_2(\bm{F},\bm{\mathcal{X}})}$ estimate.  

Next, we proceed with $\widehat{\bm{\mathcal{S}}_3(\bm{F},\bm{\mathcal{X}})}$ in \eqref{splittingS}. We split it accordingly to \eqref{R3} as follows
\begin{equation*}
  \widehat{\bm{\mathcal{S}}_3(\bm{F},\bm{\mathcal{X}})}=\widehat{\bm{\mathcal{S}}_{3,1}(\bm{F},\bm{\mathcal{X}})}+\widehat{\bm{\mathcal{S}}_{3,2}(\bm{F},\bm{\mathcal{X}})} +\widehat{\bm{\mathcal{S}}_{3,3}(\bm{F},\bm{\mathcal{X}})}+\widehat{\bm{\mathcal{S}}_{3,4}(\bm{F},\bm{\mathcal{X}})},
\end{equation*}
with
\begin{equation*}
    \begin{aligned}
    \bm{\mathcal{S}}_{3,1}(\bm{F},\bm{\mathcal{X}})=\frac{-4}{\pi R^6}\int_\mathbb{S}\big(\partial_\theta &\bm{\mathcal{X}}(\theta)^\perp\big)^T\Delta_{\theta-\eta} \bm{\mathcal{X}}(\theta)\Delta_{\theta-\eta} \bm{\mathcal{X}}(\theta)\otimes \Delta_{\theta-\eta} \bm{\mathcal{X}}(\theta)\\
    &\hspace{-0.3cm}\Delta_{\theta-\eta} \bm{X}_c(\theta)^T \Delta_{\theta-\eta} \bm{X}(\theta)\mathcal{R}_2(\Delta_\eta \bm{X}(\theta))
    \frac{\bm{F}(\theta-\eta)}{2\sin{(\eta/2)}}d\eta,\\
    \bm{\mathcal{S}}_{3,2}(\bm{F},\bm{\mathcal{X}})=\frac{2}{\pi R^4}\int_\mathbb{S}\big(\partial_\theta &\bm{\mathcal{X}}(\theta)^\perp\big)^T\Delta_{\theta-\eta} \bm{\mathcal{X}}(\theta)\Delta_{\theta-\eta} \bm{\mathcal{X}}(\theta)\otimes \Delta_{\theta-\eta} \bm{\mathcal{X}}(\theta)\\
    &\hspace{3cm}\cdot{R}_2(\Delta_\eta \bm{X}(\theta))
    \frac{\bm{F}(\theta-\eta)}{2\sin{(\eta/2)}}d\eta,\\
    \bm{\mathcal{S}}_{3,3}(\bm{F},\bm{\mathcal{X}})=\frac{4}{\pi R^8}\int_\mathbb{S}\big(\partial_\theta &\bm{\mathcal{X}}(\theta)^\perp\big)^T\Delta_{\theta-\eta} \bm{\mathcal{X}}(\theta)\Delta_{\theta-\eta} \bm{\mathcal{X}}(\theta)\otimes \Delta_{\theta-\eta} \bm{\mathcal{X}}(\theta)\\
    &\hspace{1.3cm}\cdot(\Delta_{\theta-\eta} \bm{X}_c(\theta)^T\Delta_{\theta-\eta} \bm{X}(\theta))^2
    \frac{\bm{F}(\theta-\eta)}{2\sin{(\eta/2)}}d\eta,\\
    \bm{\mathcal{S}}_{3,4}(\bm{F},\bm{\mathcal{X}})=\frac{1}{\pi R^4}\int_\mathbb{S}\big(\partial_\theta &\bm{\mathcal{X}}(\theta)^\perp\big)^T\Delta_{\theta-\eta} \bm{\mathcal{X}}(\theta)\Delta_{\theta-\eta} \bm{\mathcal{X}}(\theta)\otimes \Delta_{\theta-\eta} \bm{\mathcal{X}}(\theta)\\
    &\hspace{2.5cm}\cdot({R}_2(\Delta_\eta \bm{X}(\theta)))^2
    \frac{\bm{F}(\theta-\eta)}{2\sin{(\eta/2)}}d\eta.
    \end{aligned}
\end{equation*}
The procedure follows the steps used to bound $\bm{\mathcal{N}}_{3,4}(\theta)$ in \eqref{N3split} and \eqref{N34bound}, where the term $\mathcal{R}_2$ from \eqref{R2} was also involved. After taking Fourier transform and using Lemma \ref{lemmaI},  Young's inequality for convolutions and  summation in $m$ and $n$ gives that
\begin{equation*}
    \begin{aligned}
    \|\bm{\mathcal{S}}_{3,1}(\bm{F},\bm{\mathcal{X}})\|_{\fzeronenu}&\leq 144\sqrt{2}
    e^{\nu_m }
    C_6C_{13}C_1^3\xoneonenu^3   \|\bm{F}\|_{\fzeronenu},
    \end{aligned}
\end{equation*}
\begin{equation*}
    \begin{aligned}
    \|\bm{\mathcal{S}}_{3,2}(\bm{F},\bm{\mathcal{X}})\|_{\fzeronenu}&\leq 72
    C_6C_{13}C_1^2\xoneonenu^2   \|\bm{F}\|_{\fzeronenu},
    \end{aligned}
\end{equation*}
\begin{equation*}
    \begin{aligned}
    \|\bm{\mathcal{S}}_{3,3}(\bm{F},\bm{\mathcal{X}})\|_{\fzeronenu}&\leq 32 e^{2\nu_m }
    C_{13}C_1^2\xoneonenu^2   \|\bm{F}\|_{\fzeronenu},
    \end{aligned}
\end{equation*}
\begin{equation*}
    \begin{aligned}
    \|\bm{\mathcal{S}}_{3,4}(\bm{F},\bm{\mathcal{X}})\|_{\fzeronenu}&\leq 324C_{13}C_6^2C_1^4\xoneonenu^4   \|\bm{F}\|_{\fzeronenu}.
    \end{aligned}
\end{equation*}
Joining the above bounds, we obtain that
\begin{equation}\label{S3bound}
    \begin{aligned}
    \|\bm{\mathcal{S}}_{3}(\bm{F},\bm{\mathcal{X}})\|_{\fzeronenu}&\leq 104C_{13}C_{14}C_1^2\xoneonenu^2\|\bm{F}\|_{\fzeronenu}
    \end{aligned}
\end{equation}
with
\begin{equation}\label{C14}
    \begin{aligned}
    C_{14}&\!=\!\frac{1}{104}\Big(72C_6\!+\!32e^{2\nu_m}\!+\!144\sqrt{2}e^{\nu_m}C_6C_1 \xoneonenu\!+\!324C_6^2C_1^2\xoneonenu^2\Big),
    \end{aligned}
\end{equation}
where $C_1$ and $C_6$ previously defined in \eqref{C1} and \eqref{C6}, respectively.
We combine the bounds \eqref{S1bound}, \eqref{S2bound}, and \eqref{S3bound}, and order them as follows
\begin{equation*}
    \begin{aligned}
    \|\bm{\mathcal{S}}(\bm{F},\bm{\mathcal{X}})\|_{\fzeronenu}&\leq 4e^{4\nu_m }\|\bm{F}\|_{\fzeronenu}+28\sqrt{2}e^{5\nu_m }C_1\xoneonenu\|\bm{F}\|_{\fzeronenu}\\
    &\quad+222C_{15}C_1^2\xoneonenu^2\|\bm{F}\|_{\fzeronenu},
        \end{aligned}
\end{equation*}
with
\begin{multline}\label{C15}
    C_{15}
    =\frac{1}{222}\Big(104C_{13}C_{14}\!+\!8\sqrt{2}e^{\nu_m}(6\sqrt{2}e^{3\nu_m}\!+\!11C_{12}C_1\xoneonenu)\!+\!22C_{12}\Big),
\end{multline}
and $C_1$ in \eqref{C1}, $C_{12}$ in \eqref{C12}, $C_{13}$ in \eqref{C13} and $C_{14}$ in \eqref{C14}.

We remark that instead of \eqref{splittingS}, analogously to the splitting for $K$ in \eqref{Ksplit} we can split
\begin{equation}\label{splittingS.L}
    \begin{aligned}
    \bm{\mathcal{S}}(\bm{F},\bm{\mathcal{X}})(\theta)=
    \bm{\mathcal{S}}_0(\bm{F},\bm{\mathcal{X}})(\theta)+\bm{\mathcal{S}}_L(\bm{F},\bm{\mathcal{X}})(\theta)+\bm{\mathcal{S}}_N(\bm{F},\bm{\mathcal{X}})(\theta),
    \end{aligned}
\end{equation}
where from \eqref{auxS} and \eqref{Ksplit} we have 
\begin{equation}\notag
\begin{aligned}
\bm{\mathcal{S}}_0(\bm{F},\bm{\mathcal{X}})(\theta)=\int_{\mathbb{S}}K_0(\bm{X}_c,\bm{X})(\theta,\eta) \frac{\bm{F}(\theta-\eta)}{2\sin{(\eta/2)}}d\eta.
\end{aligned}
\end{equation}
Then $\bm{\mathcal{S}}_L(\bm{F},\bm{\mathcal{X}})(\theta)$ analogously contains $K_L(\bm{X}_c,\bm{X})(\theta,\eta)$ from \eqref{Ksplit} and $\bm{\mathcal{S}}_L$ is linear in $\bm{{X}}$.  Then $\bm{\mathcal{S}}_N(\bm{F},\bm{\mathcal{X}})(\theta)$ similarly contains $K_N(\bm{X}_c,\bm{X})(\theta,\eta)$ from \eqref{Ksplit} and $\bm{\mathcal{S}}_N$ is  nonlinear in $\bm{{X}}$.  Then it is clear from the above that we have the estimates 
\begin{equation*}
    \begin{aligned}
    \|\bm{\mathcal{S}}_0(\bm{F},\bm{\mathcal{X}})\|_{\fzeronenu}&\leq 4e^{4\nu_m }\|\bm{F}\|_{\fzeronenu},
    \\
    \|\bm{\mathcal{S}}_L(\bm{F},\bm{\mathcal{X}})\|_{\fzeronenu}
    &\leq 28\sqrt{2}e^{5\nu_m }C_1\xoneonenu\|\bm{F}\|_{\fzeronenu},
    \\
    \|\bm{\mathcal{S}}_N(\bm{F},\bm{\mathcal{X}})\|_{\fzeronenu}
    &\leq 
    222C_{15}C_1^2\xoneonenu^2\|\bm{F}\|_{\fzeronenu},
        \end{aligned}
\end{equation*}
This splits the estimates into zero order, linear order, and nonlinear which is useful because of \eqref{J}.

Next, recalling the definition of $\bm{J}(\bm{X},\bm{F}_N)$ from  \eqref{J} and its relation with $\bm{\mathcal{S}}(\bm{F},\bm{\mathcal{X}})$ in \eqref{auxS} and \eqref{splittingS.L}, it follows that
\begin{equation}\label{Jbound}
    \begin{aligned}
    \|\bm{J}(\bm{X},\bm{F}_N)\|_{\fzeronenu}&\leq 56\sqrt{2}|A_\mu| C_{16}C_1\xoneonenu\|\bm{F}_N\|_{\fzeronenu}\\
    &\quad+56\sqrt{2}|A_\mu| C_{16}C_1\xoneonenu\|\bm{F}_L\|_{\fzeronenu}\\
    &\quad+444|A_\mu| C_{15}C_1^2\xoneonenu^2\|\bm{F}_0\|_{\fzeronenu},
    \end{aligned}
\end{equation}
with
\begin{equation}\label{C16}
    C_{16}=\frac{1}{28\sqrt{2}}\Big(28\sqrt{2}e^{5\nu_m}+222C_{15}C_1\xoneonenu\Big).
\end{equation}
Finally, bound \eqref{Jbound} allows us to estimate $\bm{F}_N$ from \eqref{FNbound2},
\begin{equation*}
\begin{aligned}
    \|\bm{F}_N\|_{\fzeronenu}&\leq \frac{56\sqrt{2}|A_\mu|(1+|A_\mu|)}{(1-A_\mu)(1+A_\mu)}C_{17}C_{16}C_1\xoneonenu\|\bm{F}_L\|_{\fzeronenu}\\
    &\quad+
    \frac{444|A_\mu|(1+|A_\mu|)}{(1-A_\mu)(1+A_\mu)}C_{17}C_{15}C_1^2\xoneonenu^2\|\bm{F}_0\|_{\fzeronenu},
\end{aligned}
\end{equation*}
where
\begin{equation}\label{C17}
    C_{17}=\Big(1-\frac{56\sqrt{2}|A_\mu|(1+|A_\mu|)}{(1-A_\mu)(1+A_\mu)}C_{16}C_1\xoneonenu\Big)^{-1},
\end{equation}
and the bounds for $\bm{F}_0$, $\bm{F}_L$ are given in \eqref{F0bound} and \eqref{F1bound}. Substituting these bounds we find 
\begin{equation*}
    \begin{aligned}
        \|\bm{F}_N\|_{\fzeronenu}&\leq 112\sqrt{2}A_e\frac{|A_\mu|(1+|A_\mu|)(1-A_\mu+|A_\mu|)}{(1-A_\mu)^2(1+A_\mu)}C_{17}C_{16}C_1\xoneonenu\xtwoonenu\\
        &\quad+ 888\sqrt{2}A_e\frac{|A_\mu|(1+|A_\mu|)}{(1-A_\mu)^2(1+A_\mu)}e^{\nu_m}C_8C_{17}C_{15}C_1^2\xoneonenu\xtwoonenu.
            \end{aligned}
\end{equation*}
 Denoting 
\begin{equation}\label{D4}
    \begin{aligned}
        D_4&=\frac{1}{1000}C_1C_{17}\Big(112(1-A_\mu+|A_\mu|)C_{16}+888e^{\nu_m}C_8C_1C_{15}\Big),
    \end{aligned}
\end{equation}
where $C_1$, $C_8$, $C_{15}$, $C_{16}$, and $C_{17}$ are defined in \eqref{C1}, \eqref{C8}, \eqref{C15}, \eqref{C16}, and \eqref{C17}, we can write the estimate for $\bm{F}_N$ as \eqref{FNbound}.  This completes the proof.
\end{proof}

We will now give the proof of Lemma \ref{lemmaI}.

\begin{proof}[Proof of Lemma \ref{lemmaI}] 
Recalling \eqref{In.Integral} and \eqref{mmult} and using the odd part of the integral we can rewrite $I = I_n$ as
    	\begin{multline*}
I = -i\textup{pv}\!\int_{-\pi}^\pi
\frac{\sin{((k_1 + k_{2n})\eta/2)}-\frac{\sin{((k-k_{1})\eta/2)}}{(k-k_{1})\tan{(\eta/2)}}\sin{((k+k_{2n})\eta/2)}}{2\sin{(\eta/2)}}
\\
\times
\prod_{j=1}^{2n-1}\frac{\sin{((k_j-k_{j+1})\eta/2)}}{(k_j-k_{j+1})\sin{(\eta/2)}}
d\eta
%\\
=
\frac{-i}{2}\left(I'-I^{\prime\prime}\right),
	\end{multline*}
where
\begin{equation}\label{def.I31}
    I' \eqdef \textup{pv}\!\int_{-\pi}^\pi
\frac{\sin{((k_1 + k_{2n})\eta/2)}}{\sin{(\eta/2)}}
\prod_{j=1}^{2n-1}\frac{\sin{((k_j-k_{j+1})\eta/2)}}{(k_j-k_{j+1})\sin{(\eta/2)}}
d\eta,
\end{equation}
and
\begin{equation}\label{def.I32}
    I^{\prime\prime} %\\ 
    \eqdef \textup{pv}\!\int_{-\pi}^\pi
\cos{(\eta/2)}\frac{\sin{((k+k_{2n})\eta/2)}}{\sin{(\eta/2)}}
\prod_{j=0}^{2n-1}\frac{\sin{((k_j-k_{j+1})\eta/2)}}{(k_j-k_{j+1})\sin{(\eta/2)}}
d\eta.
\end{equation}	
 Note that if $k_1+k_{2n}=0$ then $I'=0$ and if $k+k_{2n}=0$ then $I^{\prime\prime}=0$.  We henceforth assume that $|k_1+k_{2n}|\ge 1$ and $|k+k_{2n}|\ge 1$.  We will calculate \eqref{def.I31} and then \eqref{def.I32}.

Notice that we have $\sin{((k_j-k_{j+1})\eta/2)}= \sign(k_j-k_{j+1})\sin{(|k_j-k_{j+1}|\eta/2)}$ and, since $|k_j-k_{j+1}|\ge 1$, we rewrite the quotient in the product form as follows	
	\begin{multline*}
\frac{\sin{(|k_j-k_{j+1}|\eta/2)}}{\sin{(\eta/2)}}
	=\frac{e^{i|k_j-k_{j+1}|\eta/2}-e^{-i|k_j-k_{j+1}|\eta/2}}{e^{i\eta/2}-e^{-i\eta/2}}
	\\
	=\frac{e^{i|k_j-k_{j+1}|\eta/2}(1-e^{-i|k_j-k_{j+1}|\eta})}{e^{i\eta/2}(1-e^{-i\eta})}
	=e^{i(|k_j-k_{j+1}|-1)\eta/2}\sum_{m=0}^{|k_j-k_{j+1}|-1}e^{-i\eta m}
	\\
	=\sum_{m=0}^{|k_j-k_{j+1}|-1}e^{i(-2m+|k_j-k_{j+1}|-1)\eta/2}.
	\end{multline*}
	We conclude that
	\begin{multline*}
	\prod_{\substack{j=1}}^{2n-1}\frac{\sin{(|k_j-k_{j+1}|\eta/2)}}{\sin{(\eta/2)}}
	=
\prod_{\substack{j=1}}^{2n-1}\sum_{m_j=0}^{|k_j-k_{j+1}|-1}e^{i(-2m_j+|k_j-k_{j+1}|-1)\eta/2}
\\
	=\sum_{m_1=0}^{|k_1-k_{2}|-1}\cdots \sum_{m_{2n-1}=0}^{|k_{2n-1}-k_{2n}|-1}e^{i\left(|k_1-k_{2}|+\dots+|k_{2n-1}-k_{2n}|-2(m_1+\dots+m_{2n-1})-2n\right)\eta/2}.
	\end{multline*}
	In particular following those calculations we can express the integrand of $I'$ in \eqref{def.I31} as follows
		\begin{multline*}
	\frac{\sin{((k_1+k_{2n})\eta/2)}}{\sin{(\eta/2)}}
\prod_{j=1}^{2n-1}\frac{\sin{((k_j-k_{j+1})\eta/2)}}{(k_j-k_{j+1})\sin{(\eta/2)}}
\\
	=\left(\sign(k_1+k_{2n})\prod_{j=1}^{2n-1}\frac{1}{|k_j-k_{j+1}|}\right)\sum_{\substack{m_j=0\\ 1 \le j \le 2n-1}}^{|k_j-k_{j+1}|-1} \sum_{m_{2n}=0}^{|k_1+k_{2n}|-1}e^{iB_1\eta/2},
	\end{multline*}
where to be clear in the sum the $m_j$ indicates a further summation over all $j\in \{1, \ldots, 2n-1\}$.  Also we define $B_1$ above as follows
	\begin{equation*}
	B_1=\sum_{j=1}^{2n-1}|k_j-k_{j+1}|+|k_1+k_{2n}|-2\sum_{j=1}^{2n} m_j-2n.
	\end{equation*}
Notice that no matter what is the sign of any of the terms inside the absolute values above that we always have $\sum_{j=1}^{2n-1}|k_j-k_{j+1}|+|k_1+k_{2N}| =2l$ for some integer $l$ so that $B_1$ is an even integer.  This holds because the sum contains two copies of every $k_1, \ldots, k_{2n}$. 

We further integrate as
$\int_{-\pi}^\pi e^{i B_1\eta/2} d\eta = \frac{4}{B_1}\sin(B_1\pi/2)$, and we notice that since $B_1$ is an even integer, then either $\frac{4}{B_1}\sin(B_1\pi/2)=0$ if $B_1\ne 0$ or $\frac{4}{B_1}\sin(B_1\pi/2)=2\pi$ when $B_1= 0$.
We can then find the following expression for $I'$ as
 	\begin{equation}\label{Sn.equality}
I'=
2\pi\left(\sign(k_1+k_{2n})\prod_{j=1}^{2n-1}\frac{1}{|k_j-k_{j+1}|}\right)\sum_{\substack{m_j=0\\ 1 \le j \le 2n-1}}^{|k_j-k_{j+1}|-1} \sum_{m_{2n}=0}^{|k_1+k_{2n}|-1}1_{B_1=0}.
	\end{equation}
This is our calculation of the integral \eqref{def.I31}.  We note that as a function of the single variable $m_{2n}$ then $B_1$ is decreasing and takes the value zero at most one time.  Thus $\sum_{m_{2n}=0}^{|k_1+k_{2n}|-1}1_{B_1=0} \le 1$.  We conclude that $\left| I' \right| \le 2\pi$.   Note that \eqref{Sn.Integral} is exactly \eqref{def.I31}.  So this proves the second estimate for \eqref{Sn.Integral}  in Lemma \ref{lemmaI}.

We now calculate the integral \eqref{def.I32}, which is rather similar. 
	We obtain
	\begin{multline*}
	\prod_{\substack{j=0}}^{2n-1}\frac{\sin{(|k_j-k_{j+1}|\eta/2)}}{\sin{(\eta/2)}}
\\
	=\sum_{\substack{m_j=0\\ 0 \le j \le 2n-1}}^{|k_j-k_{j+1}|-1}
	e^{i\left(|k_0-k_{1}|+\dots+|k_{2n-1}-k_{2n}|-2(m_0+\dots+m_{2n-1})-2n\right)\eta/2}.
	\end{multline*}
	Then we can express the integrand of $I^{\prime\prime}$ as follows
		\begin{multline*}
\cos{(\eta/2)}\frac{\sin{((k+k_{2n})\eta/2)}}{\sin{(\eta/2)}}
\prod_{j=0}^{2n-1}\frac{\sin{((k_j-k_{j+1})\eta/2)}}{(k_j-k_{j+1})\sin{(\eta/2)}}
\\
	=\frac12\left(\sign(k+k_{2n})\prod_{j=0}^{2n-1}\frac{1}{|k_j-k_{j+1}|}\right)\sum_{\substack{m_j=0\\ 0 \le j \le 2n-1}}^{|k_j-k_{j+1}|-1} \sum_{m_{2n}=0}^{|k+k_{2n}|-1}\left(e^{iB_2\eta/2}+e^{iB_3\eta/2}\right),
	\end{multline*}
We define $B_2$ as
	\begin{equation*}
	B_2=\sum_{j=0}^{2n-1}|k_j-k_{j+1}|+|k + k_{2n}|-2\sum_{j=0}^{2n} m_j-2n,
	\end{equation*}
and $B_3$ as
	\begin{equation*}
	B_3=\sum_{j=0}^{2n-1}|k_j-k_{j+1}|+|k + k_{2n}|-2\sum_{j=0}^{2n} m_j-2n-2,
	\end{equation*}	
Similarly since $\sum_{j=0}^{2n}|k_j-k_{j+1}|+|k + k_{2n}|$ contains two copies of every $k_0, k_1, \ldots, k_{2n}$ then it is always an even integer.  Therefore we conclude that $B_2$ and $B_3$ both are even integers.

We can then similarly find the following expression for $I^{\prime\prime}$ as
 	\begin{equation*}
I^{\prime\prime}
=
\pi\left(\sign(k\!+\!k_{2n})\!\prod_{j=0}^{2n-1}\frac{1}{|k_j-k_{j+1}|}\right)\sum_{\substack{m_j=0\\ 0 \le j \le 2n-1}}^{|k_j-k_{j+1}|-1} \sum_{m_{2n}=0}^{|k+k_{2n}|-1}\!\!\left(1_{B_2=0}\!+\!1_{B_3=0}\right).
	\end{equation*}
Then using the same argument as our upper bound estimate for $I'$ we obtain that $|I^{\prime\prime}| \le 2\pi$.  This completes the proof.
\end{proof}

\begin{remark}
We remark here that one can generally calculate the sum in \eqref{Sn.equality} exactly.  In particular the value of the sums in \eqref{Sn.equality} can be seen as the number of non-negative integer solutions to the equation  
	\begin{equation*}
	m_1+\cdots+m_{2n}
% \sum_{j=1}^{2n} m_j
=\frac{1}{2}\sum_{j=1}^{2n-1}|k_j-k_{j+1}|+|k_1+k_{2n}|-n,
	\end{equation*}
with the restrictions that 	
$0\le m_j \le |k_j-k_{j+1}|-1$ for $j=1, \ldots, 2n-1$ and  $0\le m_{2n} \le |k_1+k_{2n}|-1$.  This value can be calculated exactly using the inclusion-exclusion formula. 

Alternatively, if $n=1$ in \eqref{Sn.Integral} then one can calculate, on the region where $I'_1 \ne 0$, that we have exactly
 	\begin{equation}\notag
I'_1=
2\pi
\frac{\min\{|k_1-k_{2}|, |k_1+k_{2}|\}\sign(k_1+k_{2})}{|k_1-k_{2}|} .
	\end{equation}
And this formula is consistent with our estimate in Lemma \ref{lemmaI}.
\end{remark}

% \addtocontents{toc}{\vspace{0.2cm}}
\section{Proof of main theorem}\label{secMain}

This section is devoted to the proof of  Theorem \ref{MainTheorem}. In Subsection \ref{existence} we show the scheme of the proof for existence of solutions via a regularization argument. The main part consists in obtaining the \textit{a priori} estimates, in particular the energy inequality from \eqref{balanceX}. Uniqueness is later proved in Subsection \ref{uniqueness}.

\subsection{Existence}\label{existence}

The proof follows a standard regularization argument. We will use a regularization of the equations \eqref{Xteqn} and \eqref{viscosityjump}, written in the form of \eqref{systemfinal}, and the \textit{a priori} estimates of the previous section to find a weak solution in the sense of Definition \ref{weaksol} below. The regularity obtained for the solution will imply that the solution found is indeed a strong solution, which we prove later is unique.
\begin{defn}\label{Fsolution}
For fixed $t\in[0,T]$ and $\bm{\phi}(t)\in W^{2,\infty}(\mathbb{S})$, we say that $\bm{\psi}(t)\in L^\infty(\mathbb{S})$ is a weak solution of 
 \begin{equation*}
\begin{aligned}
\bm{\psi}(\theta,t)+2A_\mu\partial_\theta \bm{\phi}(\theta,t)^\perp\cdot\int_{\mathbb{S}}  \mathcal{T}(\bm{\phi}(\theta,t)-\bm{\phi}(\eta,t))\cdot \bm{\psi}(\eta,t)d\eta
=2A_e\p_\theta^2\bm{\phi}(\theta,t),
\end{aligned}
\end{equation*}
with $\mathcal{T}$ given by \eqref{Tijk}, if for any $\bm{\varphi}\in \mathcal{D}(\mathbb{S})$ it holds that
\begin{equation*}
    \begin{aligned}
\int_{\mathbb{S}}\bm{\psi}(\theta,t)\cdot \bm{\varphi}(\theta)d\theta+2A_\mu\int_{\mathbb{S}}\varphi_i(\theta)\int_{\mathbb{S}}\partial_\theta \phi_j(\theta,t)^\perp  \mathcal{T}_{ijk}(\bm{\phi}(\theta,t)\!-\!\bm{\phi}(\eta,t)) \psi_k(\eta,t)d\eta \hspace{0.05cm}d\theta
\\
=2A_e\int_{\mathbb{S}}\p_\theta^2\bm{\phi}(\theta,t)\cdot \bm{\varphi}(\theta).
\end{aligned}
\end{equation*}
\end{defn}
\begin{defn}\label{weaksol}
We say that $\bm{\mathcal{X}}\in L^\infty([0,T];W^{1,\infty}(\mathbb{S}))\cap L^1([0,T];W^{2,\infty}(\mathbb{S}))$ is a weak solution of \eqref{Xteqn} if for almost every $t\in[0,T]$ the arc-chord condition \eqref{arcchord} is satisfied, 
and if for any $\bm{\varphi}\in \mathcal{D}(\mathbb{S}\times[0,T])$ it holds that
\begin{equation*}
    \begin{aligned}
       \int_{\mathbb{S}}\bm{\mathcal{X}}(\theta,t)\cdot\bm{\varphi}(\theta,t)d\theta-        \int_{\mathbb{S}}\bm{\mathcal{X}}_0(\theta)\cdot\bm{\varphi}(\theta,0)d\theta-        \int_0^t\int_{\mathbb{S}}\bm{\mathcal{X}}(\theta,\tau)\cdot\partial_t\bm{\varphi}(\theta,\tau)d\theta \hspace{0.05cm}d\tau\\
       =\int_0^t\int_{\mathbb{S}}
        \bm{\varphi}(\theta,\tau)\cdot\int_{\mathbb{S}} G(\bm{\mathcal{X}}(\theta,\tau)-\bm{\mathcal{X}}(\eta,\tau))\bm{F}(\eta,\tau)\hspace{0.05cm}d\eta \hspace{0.05cm}d\theta d\tau, 
    \end{aligned}
\end{equation*}
where $G$ is defined in \eqref{G} and $\bm{F}\in L^1([0,T];L^\infty(\mathbb{S}))$ is the solution in the sense of Definition \ref{Fsolution} of \eqref{viscosityjump}.
\end{defn}

We will denote $f_M=\mathcal{J}_M f$ for general $f$ such as $f=\bm{X}$, $f=\bm{X}_c$,  $f=\bm{Y}$ or $f=\bm{F}$, with $\mathcal{J}_M$ the high frequency cut-off defined in \eqref{CutOffHigh}. We start by considering a regularized version of system \eqref{Xteqn}, \eqref{viscosityjump} (where \eqref{Xteqn} is written in \eqref{finalsystem} with the linear and nonlinear terms apart).
For each positive integer $M$,  consider the regularized initial data $\bm{\mathcal{X}}_{0,M}$ and the corresponding solution $\bm{\mathcal{X}}=\bm{X}_M+\bm{X}_{M,c}$ to the regularized system 
\begin{equation}\label{FM}
\begin{aligned}
    \partial_t\bm{\mathcal{X}}_M&=-\frac{A_e}2\big(\Lambda \bm{X}_M+\mathcal{H}\mc{R}^{-1} \bm{X}_M\big)+\mathcal{J}_M\bm{\mathcal{N}}(\bm{X}_{M,c},\bm{X}_{M}),\\
    \bm{F}_M&=2A_\mu \mathcal{J}_M\bm{\mathcal{S}}(\bm{F}_M,\bm{\mathcal{X}}_M)+2A_e\partial_\theta^2\bm{\mathcal{X}}_{M}.
\end{aligned}
\end{equation}
We define correspondingly 
$\bm{\mathcal{Y}}_{0,M}$ and $\bm{\mathcal{Y}}_M=\bm{Y}_M+\bm{Y}_{M,c}$. We recall that  \eqref{finalsystem} could be written in $\bm{\mathcal{Y}}$ variables as \eqref{systemfinal}.
The corresponding regularized system in these variables reads as follows:
\begin{equation}\label{approxsystem}
\begin{aligned}
    \widehat{\bm{Y}}_M(0)&=0,\hspace{0.3cm}\widehat{Y}_{M,2}(1)=0,\hspace{0.3cm} \widehat{\bm{Y}}_{M,c}(k)=0\hspace{0.2cm} k\neq 0,1,\hspace{0.3cm}\widehat{Y}_{M,c,1}(1)=0,\\
    \partial_t\widehat{\bm{Y}}_{M,c}(0)&=
    P(0)^{-1}\widehat{\bm{\mathcal{N}}(\bm{X}_{M,c},\bm{X}_M)}(0),\\
    \partial_t\widehat{Y}_{M,1}(1)&=-A_e \widehat{Y}_{M,1}(1)+\Big(P(1)^{-1}\widehat{\bm{\mathcal{N}}(\bm{X}_{M,c},\bm{X}_M)}(1)\Big)_1,\\
    \partial_t\widehat{\bm{Y}}_M(k)&=-\frac{A_e}2\mathcal{D}(k)\widehat{\bm{Y}}_M(k)+P(k)^{-1}\widehat{\bm{\mathcal{N}}(\bm{X}_{M,c},\bm{X}_M)}(k),\quad 2\leq k\leq M,\\
    |\widehat{Y}_{M,c,2}(1)|^2&=\frac12-\sum_{1\leq k\leq M}k\big(|\widehat{Y}_{M,2}(k)|^2-|\widehat{Y}_{M,1}(k)|^2\big),
    \end{aligned}
\end{equation} 
with $\bm{F}_M$ given by \eqref{FM}.
Since $\bm{X}_{M,c}$ is a circle with radius satisfying \eqref{radiusbound}, the arch-chord condition \eqref{arcchord} is clearly satisfied under the condition $\|\bm{X}_M\|_{\foneonenu}<k(A_\mu)$. Then, with the same size condition, $\bm{F}_M$ is estimated in terms of $\bm{X}_M$ as in Section \ref{secF}. Thus, with $\bm{F}_M$ solved in terms of $\bm{Y}_M$ using the transformation \eqref{Y},
 we obtain an ODE of the form
\begin{equation*}
    \dot{\bm{Y}}_M=\mathcal{J}_M \mathcal{G}\big(\bm{Y}_M\big),\qquad \bm{Y}_M(0)=\bm{Y}_{M,0},
\end{equation*}
for a certain nonlinear function $\mathcal{G}$. 
Notice that the ODE for $\widehat{\bm{Y}}_{M,c}(0)$ is decoupled from the rest because there are no zero modes in  $\widehat{\bm{\mathcal{N}}(\bm{X}_{M,c},\bm{X}_M)}(0)$. Therefore, Picard's theorem on Banach spaces yields the local existence of regularized solutions $\bm{Y}_M\in C^1([0,T_M);H_M^m)$, where $H_M^m=\{f\in H^m(\mathbb{S}): \text{supp}(\widehat{f})\subset [-M,M]\}.$ Since the \textit{a priori} energy estimate \eqref{balanceY} holds for the regularized system, we have uniform bounds for $\bm{Y}_M$ in the space $L^\infty(\mathbb{R}_+; \dot{\mathcal{F}}^{1,1}_\nu)\cap L^1(\mathbb{R}_+; \dot{\mathcal{F}}^{2,1}_\nu)$.
It is not hard to prove that $\bm{Y}_M$ forms a Cauchy sequence in $L^\infty([0,T];\mathcal{F}^{0,1}_\nu)$, so that we have a candidate for solution. One can then apply a version of the Aubin-Lions lemma (see Corollary 6 of \cite{MR916688}) to get the strong convergence, up to a subsequence, of the approximate problems in $L^2([0,T];\dot{\mathcal{F}}^{1,1}_\nu)$. Next, since $\widehat{\bm{Y}}_M(m,t)\rightarrow \widehat{\bm{Y}}(m,t)$ as $M\to\infty$, $\forall m\in\mathbb{Z}$ and almost every $t$, Fatou's lemma allows us to conclude that
\begin{equation*}
    \begin{aligned}
        M(t)=& \|\bm{Y}\|_{\foneonenu}(t)+\frac{A_e}{4}\dissconst \int_0^t\|\bm{Y}\|_{\dot{\mathcal{F}}^{2,1}_\nu}(\tau)d\tau\\
        &\leq \liminf_{M\to+\infty}\Big(\|\bm{Y}_M\|_{\foneonenu}(t)+\frac{A_e}{4}\dissconst \int_0^t\|\bm{Y}_M\|_{\dot{\mathcal{F}}^{2,1}_\nu}(\tau)d\tau\Big)\\
        &\leq \|\bm{Y}_0\|_{\foneone},
    \end{aligned}
\end{equation*}
so we obtain that the limit function $\bm{Y}$ belongs to $L^\infty([0,T];\dot{\mathcal{F}}^{1,1}_\nu)\cap L^1([0,T];\dot{\mathcal{F}}^{2,1}_\nu)$. 
Therefore we have  obtained the strong convergence, up to a subsequence, of $\bm{X}_M\to \bm{X}$ in 
$L^\infty([0,T];\dot{\mathcal{F}}^{1,1}_\nu)\cap L^1([0,T];\dot{\mathcal{F}}^{2,1}_\nu)$, which immediately implies from \eqref{FM}, under the size constraint \eqref{conditiontheorem}, the strong convergence $\bm{F}_M\to \bm{F}$ in $L^1([0,T];\fzeronenu)$. In fact, it suffices to consider $\bm{F}_{M_1}$ and $\bm{F}_{M_2}$, write their difference as
\begin{equation*}    
\begin{aligned}
\bm{F}_{M_1}-\bm{F}_{M_2}&=2A_\mu \big(\mathcal{J}_{M_1}\bm{\mathcal{S}}(\bm{F}_{M_1},\bm{\mathcal{X}}_{M_1})- \mathcal{J}_{M_2}\bm{\mathcal{S}}(\bm{F}_{M_1},\bm{\mathcal{X}}_{M_1})\big)\\
&\quad+2A_\mu \big(\mathcal{J}_{M_2}\bm{\mathcal{S}}(\bm{F}_{M_1},\bm{\mathcal{X}}_{M_1})- \mathcal{J}_{M_2}\bm{\mathcal{S}}(\bm{F}_{M_2},\bm{\mathcal{X}}_{M_1})\big)\\
&\quad+2A_\mu \big(\mathcal{J}_{M_2}\bm{\mathcal{S}}(\bm{F}_{M_2},\bm{\mathcal{X}}_{M_1})- \mathcal{J}_{M_2}\bm{\mathcal{S}}(\bm{F}_{M_2},\bm{\mathcal{X}}_{M_2})\big)\\
&\quad+2A_e(\partial_\theta^2\bm{\mathcal{X}}_{M_1}-\partial_\theta^2\bm{\mathcal{X}}_{M_2}),
\end{aligned}
\end{equation*}
and perform similar estimates to the ones in Subsection \ref{secF} to find that $\bm{F}_M$ forms a Cauchy sequence in $L^1([0,T]; \fzeronenu)$.  
Since $\bm{X}_{M,c}$ is given in terms of $\bm{X}_M$, the above convergence holds for $\bm{\mathcal{X}}_M$. The strong convergence $\bm{\mathcal{X}}_M\to \bm{\mathcal{X}}$ in $L^\infty([0,T];\dot{\mathcal{F}}^{1,1}_\nu)$ together with $\bm{F}_M\to \bm{F}$ in $L^1([0,T];\fzeronenu)$
yields $\bm{\mathcal{X}}$ as a solution to \eqref{Xteqn} in the sense of Definition \ref{weaksol}.   (Moreover, it is easy to check in \eqref{FM} the strong convergence of the right side terms in $L^1([0,T];\dot{\mathcal{F}}^{1,1}_\nu)$.)

We refer to Section 5 of \cite{GG-BS2019} for a  similar approximation argument, including the instant generation of analyticity and the continuity in time. We include it here for completeness. From the strong convergence in $L^1([0,T]; \foneonenu)$ of the right hand side of \eqref{Xteqn}, we must have that $\partial_t\bm{\mathcal{X}}_{M}\to \partial_t\bm{\mathcal{X}}$ in $L^1([0,T]; \foneonenu)$.
Consider $0<t\leq t_1< t_2$. Then, 
\begin{equation*}
    \|\bm{\mathcal{X}}(t_2)-\bm{\mathcal{X}}(t_1)\|_{\dot{\mathcal{F}}^{1,1}_{\nu(t)}}=\|\int_{t_1}^{t_2}\partial_t\bm{\mathcal{X}}(\tau)d\tau\|_{\dot{\mathcal{F}}^{1,1}_{\nu(t)}}\leq \int_{t_1}^{t_2} \|\partial_t\bm{\mathcal{X}}(\tau)\|_{\dot{\mathcal{F}}^{1,1}_{\nu(\tau)}}d\tau,
\end{equation*}
which from the fact that $\partial_t\bm{\mathcal{X}}\in L^1([0,T]; \foneonenu)$
yields that the solution is analytic for all positive times, and $\bm{\mathcal{X}}\in C([\varepsilon,T]; \foneonenu)$ for any $\varepsilon>0$.
% Moreover, given $0<t_1<t_2$, pick a $\tilde{\nu}_m\in(0,\nu_m)$, and denote $\tilde{\nu}$ accordingly \eqref{nu}, such that $\tilde{\nu}(t_2)<\nu(t_1)$. Thus, it holds that $\|\bm{\mathcal{X}}(t_2)-\bm{\mathcal{X}}(t_1)\|_{\dot{\mathcal{F}}^{1,1}_{\tilde{\nu}(t_2)}}\to 0$ as $t_1\to t_2$, and therefore we have that $\bm{\mathcal{X}}\in C([0,T];\dot{\mathcal{F}}^{1,1}_{\tilde{\nu}})$. Since both $\nu$ and $\tilde{\nu}$ are arbitrary numbers of an open interval, we conclude that 
% $\bm{\mathcal{X}}\in C([0,T];\dot{\mathcal{F}}^{1,1}_{\nu})$.
Moreover, fix $\tilde{\nu}_m\in(0,\nu_m)$  and denote $\tilde{\nu}(t)$ according to \eqref{nu}, now given any $t_2>0$ choose $0<t_1<t_2$ close enough to $t_2$ that $\tilde{\nu}(t_2)<\nu(t_1)$. Thus, it holds that $\|\bm{\mathcal{X}}(t_2)-\bm{\mathcal{X}}(t_1)\|_{\dot{\mathcal{F}}^{1,1}_{\tilde{\nu}(t_2)}}\to 0$ as $t_1\to t_2$, and therefore we have that $\bm{\mathcal{X}}\in C([0,T];\dot{\mathcal{F}}^{1,1}_{\tilde{\nu}})$. Since $\tilde{\nu}_m\in(0,\nu_m)$ is an arbitrary number in an open interval, we conclude that 
$\bm{\mathcal{X}}\in C([0,T];\dot{\mathcal{F}}^{1,1}_{\nu})$. 
Finally, the analyticity in space for all positive times implies that $\bm{\mathcal{X}}\in C([\varepsilon,T];\dot{\mathcal{F}}^{s,1}_{\tilde{\nu}})$ for any $s\geq0$,  $\varepsilon>0$, and $0<\tilde{\nu}<\nu$. This regularity translates to $\bm{F}$ as well for $t\geq\varepsilon$. Therefore, one can consider $\partial_t\bm{\mathcal{X}}(t_2)-\partial_t\bm{\mathcal{X}}(t_1)$ for arbitrary $t_2, t_1\geq\varepsilon$ to find in particular that $\partial_t\bm{\mathcal{X}}\in C((0,T];\mathcal{F}^{0,1}_\nu)$.

We have proven that $\bm{\mathcal{X}}$ is a strong solution in the sense of Definition \ref{strongsol} as claimed in Theorem \ref{MainTheorem}. In Subsection \ref{uniqueness} we prove that this solution is unique. \\

We now prove the global in time energy inequality in \eqref{balanceX}.

\vspace{0.1cm}
\begin{proof}[Proof of \eqref{balanceX}]
Equations \eqref{eqs} show decay of the higher frequencies \eqref{diff} if we are able to control the nonlinear terms, for which we will need the constraint \eqref{radius}.
Indeed, using \eqref{diff} and the inequality $k(k-1) \ge \frac{k^2}{2}$ for $k\ge 2$ 
implies that
\begin{equation}\label{balance}
\begin{aligned}
\frac{d}{dt}\|\bm{Y}\|_{\foneonenu}&\leq 
-\Big(\frac{A_e}4-\nu'(t)\Big)  \|\bm{Y}\|_{\ftwoonenu}+\|\bm{\mathcal{N}}(\bm{X}_c,\bm{X})\|_{\foneonenu},
\end{aligned}
\end{equation}
where we have used that $\|P(k)^{-1}\|=1$, and we can choose $\nu'(t)$ as small as we need.
The goal is thus to obtain a bound like the following
\begin{equation}\label{nonlinearterm}
\|\bm{\mathcal{N}}(\bm{X}_c,\bm{X})\|_{\foneonenu}\leq C(\|\bm{X}\|_{\foneonenu})\|\bm{Y}\|_{\ftwoonenu},    
\end{equation}
with $C(\|\bm{X}\|_{\foneonenu})\approx\|\bm{X}\|_{\foneonenu}$.   

We proceed to complete the nonlinear estimate \eqref{nonlinearterm} to obtain the adequate sign in the balance \eqref{balance}.
We insert the \textit{a priori} bounds on $\bm{F}$ given by \eqref{F0bound}, \eqref{F1bound}, and \eqref{FNbound}, into the estimate \eqref{Nestimate} to obtain that
\begin{equation*}
\begin{aligned}
\|\bm{\mathcal{N}}\|_{\foneonenu}&\leq 22\sqrt{2}A_e\frac{1-A_\mu+|A_\mu|}{1-A_\mu}D_1\|\bm{X}\|_{\foneonenu}\xtwoonenu\\
&\quad+ 147\sqrt{2}\frac{A_e}{1-A_\mu}e^{\nu_m }D_2C_8\|\bm{X}\|_{\foneonenu}\|\bm{X}\|_{\ftwoonenu}\\
&\quad+2250\sqrt{2}A_e \frac{|A_\mu|(1+|A_\mu|)}{(1-A_\mu)^2(1+A_\mu)}D_3D_4\xoneonenu\xtwoonenu,
\end{aligned}
\end{equation*}
which finally gives the desired estimate
\begin{equation}\label{nonlinearfinal}
\begin{aligned}
\|\bm{\mathcal{N}}\|_{\foneonenu}&\leq 169\sqrt{2}\frac{A_e}{1-A_\mu} D_5\xoneonenu\xtwoonenu,
\end{aligned}
\end{equation}
where
\begin{multline}\label{D5}
    D_5=
    \\
    \frac{1}{169}\Big(22(1-A_\mu+|A_\mu|)D_1+147e^{\nu_m}D_2C_8\!+\!2250\frac{|A_\mu|(1\!+\!|A_\mu|)}{(1\!-\!A_\mu)(1\!+\!A_\mu)}D_3D_4\Big),
\end{multline}
and $C_8$, $D_1$, $D_2$, $D_3$, $D_4$, are given by \eqref{C8}, \eqref{D1D2D3}, and \eqref{D4}.
Recalling the equivalence \eqref{equivalence} and inserting the above bound into \eqref{balance}, we obtain 
\begin{equation}\label{balance2}
\frac{d}{dt}\|\bm{Y}\|_{\foneonenu}
%\\
\leq 
-A_e\Big(\frac{1}4-\frac{\nu'(t)}{A_e}-169\sqrt{2}\frac{D_5}{1-A_\mu}\xoneonenu\Big)\|\bm{Y}\|_{\ftwoonenu}.
\end{equation}
Since $\nu'(t)=\frac{\nu_m}{(1+t)^2}$ and $\nu_m>0$ in \eqref{nu} can be chosen arbitrarily small, if the condition
\begin{equation}\label{condition}
1-676\sqrt{2}\frac{D_5(\xoneonenu)}{1-A_\mu}\xoneonenu>0
\end{equation}
holds initially, where $D_5$ is defined in \eqref{D5}, then the fact that $D_{5}$ decreases as $\xoneonenu$ decreases guarantees that this condition is propagated in time.
For the same reasons, this condition can be stated as a smallness condition for $\|\bm{X}_0\|_{\foneone}$ as follows
\begin{equation}\label{kdef}
    \|\bm{X}_0\|_{\foneone}< k(A_\mu),
\end{equation}
with $k$ a function defined implicitly via \eqref{condition} (see also Figure \ref{fig:pic}). Because $D_5$ is increasing on $\xoneonenu$, we have the following explicit lower bound
%\begin{equation}
%    \begin{aligned}
%        k(A_\mu)>\frac{1\!-\!A_\mu}{1584 D_5(0)}\!=\!\frac{1\!-\!A_\mu}{1382\!+\!192(1\!-\!A_\mu\!+\!|A_\mu|)\!+\!18\frac{|A_\mu|}{1\!-\!A_\mu}\big(120(1\!-\!A_\mu\!+\!|A_\mu|)\!+\!896\big)},
%    \end{aligned}
%\end{equation}
\begin{equation}\label{kmuform}
    \begin{aligned}
        &k(A_\mu)>\frac{1\!-\!A_\mu}{676\sqrt{2} D_5(0)}\\
        =&\bigg(\frac{588\sqrt{2}}{1\!-\!A_\mu}\!+\!88\sqrt{2}\Big(1\!+\!\frac{|A_\mu|}{1\!-\!A_\mu}\Big)\!+\!\frac{9\sqrt{2}|A_\mu|(1\!+\!|A_\mu|)}{(1\!-\!A_\mu)(1\!+\!A_\mu)}\Big(112\big(1\!+\!\frac{|A_\mu|}{1\!-\!A_\mu}\big)\!+\!\frac{888}{1\!-\!A_\mu}\Big)\!\bigg)^{-1}\!\!\!,
    \end{aligned}
\end{equation}
which for small enough $\xoneonenu$ approximates the actual value of $k(A_\mu)$.
Therefore,
\begin{equation}\label{balanceY}
    \begin{aligned}
    \|\bm{Y}\|_{\foneonenu}(t)+\frac{A_e}4 \dissconst \int_0^t \|\bm{Y}\|_{\ftwoonenu}(\tau)d\tau\leq \|\bm{Y}_0\|_{\foneone},
    \end{aligned}
\end{equation}
with
\begin{equation}\label{C}
    \dissconst=\dissconst\big(\|\bm{X}_0\|_{\foneonenu},A_\mu,\nu_m\big)=1-4\frac{\nu'(t)}{A_e}-676\sqrt{2}\frac{D_5(\|\bm{X}_0\|_{\foneonenu})}{1-A_\mu}\|\bm{X}_0\|_{\foneonenu}.
\end{equation}
Moreover, since $\|\bm{Y}\|_{\foneonenu}\leq \|\bm{Y}\|_{\ftwoonenu}$, the inequality \eqref{balance2} gives that
\begin{equation*}
    \begin{aligned}
    \frac{d}{dt}\|\bm{Y}\|_{\foneonenu}&\leq 
    -\frac{A_e}4 \dissconst \|\bm{Y}\|_{\foneonenu},
\end{aligned}
\end{equation*}
and thus 
\begin{equation}\label{decay}
\begin{aligned}
\|\bm{Y}\|_{\foneonenu}&\leq \|\bm{Y}_0\|_{\foneonenu}e^{-\frac{A_e}4 \dissconst t}.
\end{aligned}
\end{equation}
This completes the decay estimate.

The control of the zero frequency follows from \eqref{zerofreq} with
\begin{equation}\label{zeroaux}
    \begin{aligned}
     |\widehat{\bm{X}_c}(0)|\leq |\widehat{\bm{X}}_{0,c}(0)|+\int_0^t |\widehat{\bm{\mathcal{N}}(\bm{X}_c,\bm{X})}(0)|d\tau.
    \end{aligned}
\end{equation}
Notice that the estimates of the nonlinear terms in $\mathcal{F}^{0,1}$ can be done as in Section \ref{secanalytic}
and yield the bound
\begin{equation*}
    \begin{aligned}
    |\widehat{\bm{\mathcal{N}}(\bm{X}_c,\bm{X})}(0)|\leq \|\bm{\mathcal{N}}(\bm{X}_c,\bm{X})\|_{\fzerone}\leq  A_e\frac{\tilde{D}_5}{1-A_\mu} \xoneonenu\xtwoonenu,
    \end{aligned}
\end{equation*}
where $\tilde{D}_5$ is a constant that plays the role of $D_5$. 
Recalling \eqref{equivalence} and the energy balance \eqref{balanceY}, 
we introduce this bound back to \eqref{zeroaux} to conclude
\begin{equation*}
    \begin{aligned}
     |\widehat{\bm{X}_c}(0)|\leq |\widehat{\bm{X}}_{0,c}(0)|+\tilde{C}\|\bm{X}_0\|_{\dot{\mathcal{F}}^{1,1}}^2,
    \end{aligned}
\end{equation*}
with 
\begin{equation}\label{Ctilde}
    \tilde{C}=\frac{\tilde{D}_5}{(1-A_\mu)\dissconst},
\end{equation}
and $D_5$, $\dissconst$ given in \eqref{D5} and \eqref{C} respectively.

Finally, the  decay \eqref{decay} applied to \eqref{freq12} yields that
\begin{equation*}
    \begin{aligned}
     \frac{R(t)^2}{2}=|\widehat{Y}_{c,2}(1)|^2\to \frac12 \text{ as } t\to +\infty,
\end{aligned}
\end{equation*}
showing the exponentially fast convergence to a uniformly parametrized circle of area $\pi$.
\end{proof}

\subsection{Uniqueness}\label{uniqueness}

Consider two solutions $\bm{\mathcal{X}}=\bm{X}_c+\bm{X}$ and $\tilde{\bm{\mathcal{X}}}=\tilde{\bm{X}_c}+\tilde{\bm{X}}$ with initial data $\bm{\mathcal{X}}_0$ and $\tilde{\bm{\mathcal{X}}}_0$ in $\mathcal{F}^{1,1}$. Recalling the system in the $\bm{\mathcal{Y}}$ variables \eqref{evolutionFourierY}, we have that
\begin{equation}\label{uniq}
\begin{aligned}
\frac{d}{dt}\|\bm{\mathcal{Y}}-\widetilde{\bm{\mathcal{Y}}}\|_{\dot{\mathcal{F}}^{1,1}_\nu}
&\leq 
-\Big(\frac{A_e}4-\nu'(t)\Big)  \|\bm{Y}-\widetilde{\bm{Y}}\|_{\ftwoonenu}\\&\quad+\sqrt{2}\|\bm{\mathcal{N}}(\bm{X}_c,\bm{X})-\bm{\mathcal{N}}(\tilde{\bm{X}}_c,\tilde{\bm{X}})\|_{\dot{\mathcal{F}}^{1,1}_\nu},
\end{aligned}
\end{equation}
and
\begin{equation}\label{uniqcenter}
    \begin{aligned}
        |\widehat{\bm{Y}}_c(0)-\widehat{\widetilde{\bm{Y}}}_c(0)|&\leq |\widehat{\bm{Y}}_{0,c}(0)-\widehat{\widetilde{\bm{Y}}}_{0,c}(0)|\\
        &\quad+\int_0^t \Big|P(0)^{-1} \widehat{\bm{\mathcal{N}}(\bm{X}_c,\bm{X})}(0)-P(0)^{-1}\widehat{\bm{\mathcal{N}}(\tilde{\bm{X}}_c,\tilde{\bm{X}})}(0)  \Big|d\tau.
    \end{aligned}
\end{equation}
Notice that, in comparison with \eqref{balance}, we are including in the left-hand side of \eqref{uniq} the terms corresponding to (the first frequency of) the circle part,
$$2|\widehat{Y}_{c,2}(1)-\widehat{\tilde{Y}}_{c,2}(1)|.$$
Although these terms are neutral with respect to the dissipative linear operator, whenever they appear on the right-hand side we will be able to absorb them by using Gr\"onwall's lemma and \eqref{balanceY} (which both $\bm{Y}$ and $\tilde{\bm{Y}}$ satisfy).  
Notice further that since the nonlinear terms do not contain the zero frequency of $\bm{\mathcal{Y}}$, i.e., $\widehat{\bm{Y}}_c(0)$, equation \eqref{uniqcenter}  implies that
\begin{equation}\label{uniqcenter2}
     |\widehat{\bm{Y}}_c(0)-\widehat{\widetilde{\bm{Y}}}_c(0)|=0
\end{equation}
once we show from \eqref{uniq} that $\|\bm{\mathcal{Y}}-\widetilde{\bm{\mathcal{Y}}}\|_{\dot{\mathcal{F}}^{1,1}_\nu}=0$. Thus we proceed to deal with \eqref{uniq}.

The difference between the nonlinear terms in \eqref{uniq} is split in four, accordingly to \eqref{Ni}, so that we have
\begin{multline}\label{uniqnonlinear}
    \|\bm{\mathcal{N}}(\bm{X}_c,\bm{X})-\bm{\mathcal{N}}(\tilde{\bm{X}_c},\tilde{\bm{X}})\|_{\foneonenu}\leq \|\bm{\mathcal{N}}_1(\bm{X}_c,\bm{X})-\bm{\mathcal{N}}_1(\tilde{\bm{X}_c},\tilde{\bm{X}})\|_{\mathcal{F}^{0,1}_\nu}
    \\
    +\|\bm{\mathcal{N}}_2(\bm{X}_c,\bm{X})\!-\!\bm{\mathcal{N}}_2(\tilde{\bm{X}_c},\tilde{\bm{X}})\|_{\mathcal{F}^{0,1}_\nu}\!+\!\|\bm{\mathcal{N}}_3(\bm{X}_c,\bm{X})\!-\!\bm{\mathcal{N}}_3(\tilde{\bm{X}_c},\tilde{\bm{X}})\|_{\mathcal{F}^{0,1}_\nu}
    \\
    +\|\bm{\mathcal{N}}_4(\bm{X}_c,\bm{X})-\bm{\mathcal{N}}_4(\tilde{\bm{X}_c},\tilde{\bm{X}})\|_{\mathcal{F}^{0,1}_\nu}.
\end{multline}
We start by explaining the estimate corresponding to the first sub-term $\bm{\mathcal{N}}_{1,1}$ in detail (see \eqref{N1split}), and later we will explain the general procedure. We have 
\begin{equation}\label{N1diff}
\begin{aligned}
\bm{\mathcal{N}}_{1,1}(\bm{X}_c,\bm{X})(\theta)-\bm{\mathcal{N}}_{1,1}(\tilde{\bm{X}_c},\tilde{\bm{X}})(\theta)= \bm{Q}_1+\bm{Q}_2+\bm{Q}_3+\bm{Q}_4,
\end{aligned}
\end{equation}
where
\begin{equation*}
    \begin{aligned}
\bm{Q}_1&=\Big(\frac{-1}{4\pi R^2}\!+\!\frac{1}{4\pi \tilde{R}^2}\Big)\!\!\!\int_\mathbb{S}\!\!\p_\theta\Delta_\eta \bm{X}_c(\theta)^T\!\Delta_\eta \bm{X}(\theta)\bm{F}_L(\eta)d\eta,
\\
\bm{Q}_2&=\frac{1}{4\pi \tilde{R}^2}\int_\mathbb{S}\Big(\p_\theta\Delta_\eta \tilde{\bm{X}_c}(\theta)-\p_\theta\Delta_\eta \bm{X}_c(\theta)\Big)^T\Delta_\eta \tilde{\bm{X}}(\theta)\tilde{\bm{F}}_L(\eta)d\eta,
\\
\bm{Q}_3&=\frac{1}{4\pi \tilde{R}^2}\int_\mathbb{S}\p_\theta\Delta_\eta \bm{X}_c(\theta)^T\Big(\Delta_\eta \tilde{\bm{X}}(\theta)-\Delta_\eta \bm{X}(\theta)\Big)\tilde{\bm{F}}_L(\eta)d\eta,
\\
\bm{Q}_4&=\frac{1}{4\pi \tilde{R}^2}\int_\mathbb{S}\p_\theta\Delta_\eta \bm{X}_c(\theta)^T\Delta_\eta \bm{X}(\theta)\Big(\tilde{\bm{F}}_L(\eta)-\bm{F}_L(\eta)\Big)d\eta.
\end{aligned}
\end{equation*}
For the first term, we need to estimate the difference between $R$ and $\tilde{R}$. Recalling \eqref{radius}, where $|\widehat{Y}_{c,2}(1)|^2=\frac{R^2}2$ with $R^2 = a^2+b^2$, we have
%\begin{equation*}
%    \begin{aligned}
%    |R^2-\tilde{R}^2|&=\Big|-2\sum_{k\geq1}k\big(|\widehat{Y}_2(k)|^2-|\widehat{Y}_1(k)|^2\big)+2\sum_{k\geq1}k\big(|\widehat{\tilde{Y}}_2(k)|^2-|\widehat{\tilde{Y}}_1(k)|^2\big)\Big|\\
%    &\leq 2\sum_{k\geq1}k\Big( \big| |\widehat{Y}_2(k)|-|\widehat{\tilde{Y}}_2(k)|\big|\big(|\widehat{Y}_2(k)|+|\widehat{\tilde{Y}}_2(k)|\big)\\
%    &\quad+\big| |\widehat{Y}_1(k)|-|\widehat{\tilde{Y}}_1(k)|\big|\big(|\widehat{Y}_1(k)|+|\widehat{\tilde{Y}}_1(k)|\big)    \Big)\\
%    &\leq \big(\|\bm{Y}\|_{\dot{\mathcal{F}}^{1/2,1}}+\|\tilde{\bm{Y}}\|_{\dot{\mathcal{F}}^{1/2,1}}\big)\|\bm{Y}-\tilde{\bm{Y}}\|_{\dot{\mathcal{F}}^{1/2,1}}.
%    \end{aligned}
%\end{equation*}
\begin{equation*}
    \begin{aligned}
    |R^2-\tilde{R}^2|&=\Big|-2\sum_{k\geq1}k\big(|\widehat{Y}_2(k)|^2-|\widehat{Y}_1(k)|^2\big)+2\sum_{k\geq1}k\big(|\widehat{\tilde{Y}}_2(k)|^2-|\widehat{\tilde{Y}}_1(k)|^2\big)\Big|\\
    &\leq 2\sum_{k\geq1}k\Big( \big| |\widehat{Y}_2(k)|-|\widehat{\tilde{Y}}_2(k)|\big|\big(|\widehat{Y}_2(k)|+|\widehat{\tilde{Y}}_2(k)|\big)\\
    &\quad+\big| |\widehat{Y}_1(k)|-|\widehat{\tilde{Y}}_1(k)|\big|\big(|\widehat{Y}_1(k)|+|\widehat{\tilde{Y}}_1(k)|\big)    \Big).
    \end{aligned}
\end{equation*}
Further note for $j=1,2$ that
$$
\big| |\widehat{Y}_j(k)|-|\widehat{\tilde{Y}}_j(k)|\big|
\le 
\big| \widehat{Y}_j(k)-\widehat{\tilde{Y}}_j(k)\big|
$$
And on the $\mathbb{S}$ domain we have $|\widehat{Y}_j(k)| \le \|Y_j\|_{L^\infty(\mathbb{S})} \le \|Y_j\|_{\dot{\mathcal{F}}^{0,1}}$.  We conclude 
\begin{equation*}
    |R^2-\tilde{R}^2|
    \leq
    \left(\|\bm{Y}\|_{\dot{\mathcal{F}}^{0,1}}+\|\tilde{\bm{Y}}\|_{\dot{\mathcal{F}}^{0,1}}\right)
    \|\bm{Y}-\tilde{\bm{Y}}\|_{\dot{\mathcal{F}}^{1,1}}
\end{equation*}
Therefore, using also \eqref{radius.lower.bd}, we obtain
\begin{equation}\notag
\Big|\frac1{R^2}-\frac1{\tilde{R}^2}\Big|
=
\Big|\frac{\tilde{R}^2-R^2}{R^2\tilde{R}^2}\Big|
\leq
\frac{\|\bm{Y}\|_{\dot{\mathcal{F}}^{0,1}}+\|\tilde{\bm{Y}}\|_{\dot{\mathcal{F}}^{0,1}}}{\sqrt{1-\frac12\|\bm{Y}\|_{\dot{\mathcal{F}}^{\frac12,1}}}\sqrt{1-\frac12\|\tilde{\bm{Y}}\|_{\dot{\mathcal{F}}^{\frac12,1}}}}\|\bm{Y}-\tilde{\bm{Y}}\|_{\dot{\mathcal{F}}^{1,1}}.
\end{equation}
In particular, for a constant $c\big(\|\bm{Y}\|_{\dot{\mathcal{F}}^{1,1}},\|\tilde{\bm{Y}}\|_{\dot{\mathcal{F}}^{1,1}}\big)>0$, we can write
\begin{equation*}
    \Big|\frac1{R^2}-\frac1{\tilde{R}^2}\Big|\leq c\big(\|\bm{Y}\|_{\dot{\mathcal{F}}^{1,1}},\|\tilde{\bm{Y}}\|_{\dot{\mathcal{F}}^{1,1}}\big)\|\bm{Y}-\tilde{\bm{Y}}\|_{\dot{\mathcal{F}}^{1,1}}.
\end{equation*}
Then, the bound for $\bm{Q}_1$ follows as in the estimate for the term \eqref{N11}, we obtain
\begin{equation*}
    \begin{aligned}
    \|\bm{Q}_1\|_{\fzeronenu}\leq c(\|\bm{Y}\|_{\foneonenu},\|\tilde{\bm{Y}}\|_{\foneonenu})\|\bm{F}_L\|_{\fzeronenu}\|\bm{Y}-\tilde{\bm{Y}}\|_{\dot{\mathcal{F}}^{1,1}},
    \end{aligned}
\end{equation*}
and introducing the estimate for $\bm{F}_L$ from \eqref{F1bound} we have
\begin{equation*}
    \begin{aligned}
    \|\bm{Q}_1\|_{\fzeronenu}\leq A_e c(\|\bm{Y}\|_{\foneonenu},\|\tilde{\bm{Y}}\|_{\foneonenu},A_\mu,\nu_m)\|\bm{Y}\|_{\ftwoonenu}\|\bm{Y}-\tilde{\bm{Y}}\|_{\dot{\mathcal{F}}^{1,1}},
    \end{aligned}
\end{equation*}
which is trivially bounded by
\begin{equation*}
    \begin{aligned}
    \|\bm{Q}_1\|_{\fzeronenu}\leq A_e c(\|\bm{Y}\|_{\foneonenu},\|\tilde{\bm{Y}}\|_{\foneonenu},A_\mu,\nu_m)\|\bm{Y}\|_{\ftwoonenu}\|\bm{\mathcal{Y}}-\tilde{\bm{\mathcal{Y}}}\|_{\dot{\mathcal{F}}^{1,1}}.
    \end{aligned}
\end{equation*}
It is now clear that \eqref{balanceY} allows to control this term by Gr\"onwall's lemma in \eqref{uniq}.

We proceed to estimate $\bm{Q}_2$ in \eqref{N1diff}. Following the steps in \eqref{N11}, but maintaining the difference between $\tilde{\bm{X}}_c$ and $\bm{X}_c$ together, we find that
\begin{equation*}
    \begin{aligned}
    \|\bm{Q}_2\|_{\fzeronenu}&\leq c(\|\bm{Y}\|_{\foneonenu},\|\tilde{\bm{Y}}\|_{\foneonenu},A_\mu,\nu_m)\|\tilde{\bm{Y}}\|_{\ftwoonenu}\big|\widehat{Y}_{c,2}(1)-\widehat{\tilde{Y}}_{c,2}(1)\big|\\
    &\leq A_e c(\|\bm{Y}\|_{\foneonenu},\|\tilde{\bm{Y}}\|_{\foneonenu},A_\mu,\nu_m)\|\tilde{\bm{Y}}\|_{\ftwoonenu}\|\bm{\mathcal{Y}}-\tilde{\bm{\mathcal{Y}}}\|_{\dot{\mathcal{F}}^{1,1}},
    \end{aligned}
\end{equation*}
and it is thus controlled in the same way. The bound for $\bm{Q}_3$ follows exactly as in \eqref{N11} and has the same structure as the bound for $\bm{Q}_2$.

Finally, we are left with $\bm{Q}_4$, for which we have
\begin{equation*}
    \begin{aligned}
    \bm{Q}_4\leq c(\|\bm{Y}\|_{\foneonenu},\|\tilde{\bm{Y}}\|_{\foneonenu})\|\tilde{\bm{F}}_L-\bm{F}_L\|_{\fzeronenu}.
    \end{aligned}
\end{equation*}
We emphasize that the constant above is given \textit{exactly} by the one for $\bm{\mathcal{N}}_{1,1}$ in \eqref{N11bound}.
The estimate for $\tilde{\bm{F}}_L-\bm{F}_L$ follows from \eqref{F1} (compare to \eqref{F1bound}), we have
\begin{equation}\label{uniqF1}
    \|\tilde{\bm{F}}_L-\bm{F}_L\|_{\fzeronenu}\leq 2A_e\|\tilde{\bm{X}}-\bm{X}\|_{\ftwoonenu}
    +2A_e \frac{|A_\mu|}{1-A_\mu} \|\tilde{\bm{X}}-\bm{X}\|_{\foneonenu},
\end{equation}
so, moving to $\bm{Y}$ variable, we obtain
\begin{equation*}
    \begin{aligned}
    \bm{Q}_4&\leq A_e c(\|\bm{Y}\|_{\foneonenu},\|\tilde{\bm{Y}}\|_{\foneonenu})\|\tilde{\bm{Y}}-\bm{Y}\|_{\ftwoonenu}\\
    &\quad+A_e c(\|\bm{Y}\|_{\foneonenu},\|\tilde{\bm{Y}}\|_{\foneonenu},A_\mu,\nu_m)\|\tilde{\bm{Y}}-\bm{Y}\|_{\foneonenu},
    \end{aligned}
\end{equation*}
and therefore trivially we have
\begin{equation*}
    \begin{aligned}
    \bm{Q}_4&\leq A_e c(\|\bm{Y}\|_{\foneonenu},\|\tilde{\bm{Y}}\|_{\foneonenu})\|\tilde{\bm{Y}}-\bm{Y}\|_{\ftwoonenu}\\
    &\quad+A_e c(\|\bm{Y}\|_{\foneonenu},\|\tilde{\bm{Y}}\|_{\foneonenu},A_\mu,\nu_m)\|\tilde{\bm{\mathcal{Y}}}-\bm{\mathcal{Y}}\|_{\dot{\mathcal{F}}^{1,1}_\nu}.
    \end{aligned}
\end{equation*}
Although in this section we are denoting $c$ to all constants (possibly depending on $\|\bm{Y}\|_{\foneonenu}$, $\|\tilde{\bm{Y}}\|_{\foneonenu}$, $A_\mu$, $\nu_m$), it is important to notice that the constant in front of the high order term $\|\tilde{\bm{Y}}-\bm{Y}\|_{\ftwoonenu}$ is less or equal than the one appearing in the nonlinear estimates from Section \ref{secanalytic}. This will allow us to absorb these terms using the negative sign coming from the dissipative linear term without additional conditions on the initial data other than the one needed for the earlier existence proof.

In summary, so far we have obtained that 
\begin{equation*}
    \begin{aligned}
    \|\bm{\mathcal{N}}_{1,1}(\bm{X}_c,\bm{X})&-\bm{\mathcal{N}}_{1,1}(\tilde{\bm{X}_c},\tilde{\bm{X}})\|_{\fzeronenu}\leq A_e c(\|\bm{Y}\|_{\foneonenu},\|\tilde{\bm{Y}}\|_{\foneonenu})\|\tilde{\bm{Y}}-\bm{Y}\|_{\ftwoonenu}\\
    &\hspace{-0.5cm}+A_e g(\|\bm{Y}\|_{\foneonenu},\|\tilde{\bm{Y}}\|_{\foneonenu},\|\bm{Y}\|_{\ftwoonenu},\|\tilde{\bm{Y}}\|_{\ftwoonenu},A_\mu,\nu_m)\|\tilde{\bm{\mathcal{Y}}}\!-\!\bm{\mathcal{Y}}\|_{\dot{\mathcal{F}}^{1,1}_\nu},
    \end{aligned}
\end{equation*}
where $g$ is a function whose $L^1$-in-time norm  is bounded independently of time in terms of the initial data $\|\bm{Y}_0\|_{\fzerone}$, $\|\tilde{\bm{Y}}_0\|_{\fzerone}$.   Therefore the second term above can be controlled in \eqref{uniq} after using the Gr\"onwall inequality.

Following the same steps for all the terms corresponding to $\bm{\mathcal{N}}_1$ from \eqref{N1split}, it is clear that one obtains
\begin{equation*}
    \begin{aligned}
    \|\bm{\mathcal{N}}_1(\bm{X}_c,\bm{X})&-\bm{\mathcal{N}}_1(\tilde{\bm{X}_c},\tilde{\bm{X}})\|_{\fzeronenu}\leq  c(\|\bm{Y}\|_{\foneonenu},\|\tilde{\bm{Y}}\|_{\foneonenu})\|\tilde{\bm{F}}_L-\bm{F}_L\|_{\fzeronenu}\\
    &\hspace{-0.5cm}+A_e g(\|\bm{Y}\|_{\foneonenu},\|\tilde{\bm{Y}}\|_{\foneonenu},\|\bm{Y}\|_{\ftwoonenu},\|\tilde{\bm{Y}}\|_{\ftwoonenu},A_\mu,\nu_m)\|\tilde{\bm{\mathcal{Y}}}\!-\!\bm{\mathcal{Y}}\|_{\dot{\mathcal{F}}^{1,1}_\nu},
    \end{aligned}
\end{equation*}
where we use the same letter $g$ to denote another $L^1$-in-time function as explained above and the constant in front of $\|\tilde{\bm{F}}_L-\bm{F}_L\|_{\fzeronenu}$ is exactly given by the one in \eqref{N1}. Since the coefficient of the higher order term in the bound \eqref{uniqF1} is smaller than the one in \eqref{F1bound}, we guarantee that
\begin{multline}\label{uniqaux}
    \|\bm{\mathcal{N}}_1(\bm{X}_c,\bm{X})-\bm{\mathcal{N}}_1(\tilde{\bm{X}_c},\tilde{\bm{X}})\|_{\fzeronenu}
    \\
    \leq  A_e c(\|\bm{Y}\|_{\foneonenu},\|\tilde{\bm{Y}}\|_{\foneonenu})\|\tilde{\bm{Y}}-\bm{Y}\|_{\ftwoonenu}
\\
+A_e g(\|\bm{Y}\|_{\foneonenu},\|\tilde{\bm{Y}}\|_{\foneonenu},\|\bm{Y}\|_{\ftwoonenu},\|\tilde{\bm{Y}}\|_{\ftwoonenu},A_\mu,\nu_m)\|\tilde{\bm{\mathcal{Y}}}\!-\!\bm{\mathcal{Y}}\|_{\dot{\mathcal{F}}^{1,1}_\nu},
\end{multline}
Now, we realize that the same idea applies to the other nonlinear terms in \eqref{uniqnonlinear}. In $\bm{\mathcal{N}}_2$ there are not high order terms to absorb, since the expression of $\bm{F}_0$ \eqref{F0} only depends on the circle part, which has to be controlded via Gr\"onwall. The term $\bm{\mathcal{N}}_3$ will provide an estimate like the one above for $\bm{\mathcal{N}}_1$, where the constant in front of $\|\tilde{\bm{Y}}-\bm{Y}\|_{\ftwoonenu}$ is smaller than \eqref{N3}, for the same reasons given before. Finally,
the same can be said for $\bm{\mathcal{N}}_4$, but with an analogous estimate to \eqref{uniqF1} for the difference $\bm{F}_N-\tilde{\bm{F}}_N$. It follows in the same way as the estimate \eqref{FNbound}, so we omit details to avoid repetition.

The final estimate for the difference of the nonlinear terms in \eqref{uniqnonlinear} has then the form \eqref{uniqaux}, with a coefficient of the highest order norm smaller than the coefficient of the  norm with the highest order derivative in \eqref{nonlinearfinal}. Therefore, under condition \eqref{conditiontheorem}, the highest regularity terms in the nonlinear upper bound can be absorbed by the dissipation in \eqref{uniq} and thus
\begin{equation*}
\begin{aligned}
    \frac{d}{dt}\|\bm{\mathcal{Y}}-\widetilde{\bm{\mathcal{Y}}}&\|_{\foneonenu}
\\
&\leq A_e g(\|\bm{Y}\|_{\foneonenu},\|\tilde{\bm{Y}}\|_{\foneonenu},\|\bm{Y}\|_{\ftwoonenu},\|\tilde{\bm{Y}}\|_{\ftwoonenu},A_\mu,\nu_m)\|\tilde{\bm{\mathcal{Y}}}-\bm{\mathcal{Y}}\|_{\foneonenu}
,
\end{aligned}
\end{equation*}
which provides for all time via Gr\"onwall that
\begin{equation*}
    \|\bm{\mathcal{Y}}-\widetilde{\bm{\mathcal{Y}}}\|_{\foneonenu}\leq c\big(\|\bm{Y}_0\|_{\foneonenu},\|\tilde{\bm{Y}}_0\|_{\foneonenu},A_e, A_\mu,\nu_m\big)\|\bm{\mathcal{Y}}_0-\widetilde{\bm{\mathcal{Y}}}_0\|_{\foneonenu}.
\end{equation*}
We conclude that $\|\bm{\mathcal{Y}}-\widetilde{\bm{\mathcal{Y}}}\|_{\foneonenu}=0$. Together with \eqref{uniqcenter2}, this completes the proof.
\qed

\providecommand{\bysame}{\leavevmode\hbox to3em{\hrulefill}\thinspace}
\providecommand{\href}[2]{#2}

% %%\newpage

% \vspace{1cm}

% \begin{quote}
% 	\begin{tabular}{ll}
% 		\textbf{Eduardo Garc\'ia-Ju\'arez}\\
% 		{\small Department of Mathematics}\\
% 		{\small University of Pennsylvania}\\
% 		{\small David Rittenhouse Lab., 209 South 33rd St.}\\
% 		{\small Philadelphia, PA 19104, USA}\\
% 		{\small Email: edugar@math.upenn.edu}
% 	\end{tabular}
% \end{quote}
% \vspace{0.5cm}
% \begin{quote}
% 	\begin{tabular}{ll}
% 		\textbf{Yoichiro Mori}\\
% 		{\small Department of Mathematics}\\
% 		{\small University of Pennsylvania}\\
% 		{\small David Rittenhouse Lab., 209 South 33rd St.}\\
% 		{\small Philadelphia, PA 19104, USA}\\
% 		{\small Email: y1mori@math.upenn.edu}
% 	\end{tabular}
% \end{quote}
% \vspace{0.5cm}
% \begin{quote}
% 	\begin{tabular}{ll}
% 		\textbf{Robert M. Strain}\\
% 		{\small Department of Mathematics}\\
% 		{\small University of Pennsylvania}\\
% 		{\small David Rittenhouse Lab., 209 South 33rd St.}\\
% 		{\small Philadelphia, PA 19104, USA}\\
% 		{\small Email: strain@math.upenn.edu}
% 	\end{tabular}
% \end{quote}

\end{document}